\documentclass[12pt]{article}
\usepackage{amsmath,amsfonts,amsthm,amssymb, mathtools}
\usepackage{mathrsfs, graphicx,color,latexsym, tikz, calc,cite,enumerate,indentfirst}
\usetikzlibrary{shadows}
\usetikzlibrary{patterns,arrows,decorations.pathreplacing}
\textwidth 160mm \textheight230mm \oddsidemargin=0cm
\evensidemargin=0cm \topmargin=-1.5cm

\usepackage[colorlinks=true,
linkcolor=blue,citecolor=blue,
urlcolor=blue]{hyperref}

\newtheorem{theorem}{Theorem}[section]
\newtheorem{lemma}{Lemma}[section]
\newtheorem{problem}{Problem}[section]

\newtheorem{corollary}{Corollary}[section]

\newtheorem{example}{Example}
\newtheorem{claim}{Claim}

\renewcommand\proofname{\it{Proof}}

\allowdisplaybreaks

\title{\bf Distance-regular Cayley graphs over $\mathbb{Z}_{p^s}\oplus\mathbb{Z}_{p}$}

\author{Xiongfeng Zhan$^a$, Lu Lu$^b$, Xueyi Huang$^{a,}$\thanks{Corresponding author.}\setcounter{footnote}{-1}\footnote{\emph{Email address:} zhanxfmath@163.com (X. Zhan), lulumath@csu.edu.cn (L. Lu), huangxymath@163.com (X. Huang).}\\[2mm]
	\small $^a$School of Mathematics, East China University of Science and Technology, \\
	\small  Shanghai 200237, P. R. China\\
	\small $^b$School of Mathematics and Statistics, Central South University,\\
	\small Changsha, Hunan, 410083, P. R. China
}

\date{}

\begin{document}
	\maketitle
	
	\begin{abstract}
		In [Distrance-regular Cayley graphs on dihedral groups, J. Combin. Theory Ser B 97 (2007) 14--33], Miklavi\v{c} and Poto\v{c}nik proposed the  problem of characterizing distance-regular Cayley graphs, which can be viewed as an extension of the problem of identifying strongly regular Cayley graphs, or equivalently, regular partial difference sets. In this paper, all  distance-regular Cayley graphs over $\mathbb{Z}_{p^s}\oplus\mathbb{Z}_{p}$ with $p$ being an odd prime are determined. It is shown that every such graph   is isomorphic to a complete graph, a complete multipartite graph, or the line graph of a transversal design $TD(r,p)$ with $2\leq r\leq p-1$.
		
		\par\vspace{2mm}
		
		\noindent{\bfseries Keywords:} Distance-regular graph, Cayley graph, Schur ring, Fourier transformation, transversal design
		\par\vspace{1mm}
		
		\noindent{\bfseries 2010 MSC:} 05E30, 05C25
	\end{abstract}

	\section{Introduction}\label{section::1}
	
	Let $G$ be a finite group with identity $1$, and let $S$ be an inverse closed subset of $G\setminus\{1\}$. The \textit{Cayley graph} $\mathrm{Cay}(G,S)$ is the graph with vertex set $G$, and with an edge joining two vertices $g,h\in G$ if and only if $g^{-1}h\in S$. Here $S$ is called the \textit{connection set} of $\mathrm{Cay}(G,S)$. It is known that $\mathrm{Cay}(G,S)$ is connected if and only if  $\langle S \rangle=G$, and that $G$ acts regularly on the vertex set of $\mathrm{Cay}(G,S)$ by left multiplicity. 
	
	Let $\Gamma$ be a connected graph with vertex set $V(\Gamma)$. The \textit{distance} $\partial_\Gamma(u,v)$ between two vertices $u,v$ of $\Gamma$ is the length of a shortest path connecting them in $\Gamma$, and the \textit{diameter} $d_\Gamma$ of $\Gamma$ is the maximum distance  in $\Gamma$. For $v\in V(\Gamma)$, let $N_i^\Gamma(v)$ denote the set of vertices at distance $i$ from $v$ in $\Gamma$. In particular, we denote $N^\Gamma(v)=N_1^\Gamma(v)$. When $\Gamma$ is clear from the context, we use  $\partial(u,v)$, $d$, $N_i(v)$ and $N(v)$ instead of  $\partial_\Gamma(u,v)$, $d_\Gamma$, $N_i^\Gamma(v)$ and $N^\Gamma(v)$, respectively. For  $u,v\in V(\Gamma)$ with $\partial(u,v)=i$ ($0\leq i\leq d$), let
	$$
	c_i(u,v)=|N_{i-1}(u)\cap N(v)|,~~ a_i(u,v)=|N_{i}(u)\cap N(v)|, ~~ b_i(u,v)=|N_{i+1}(u)\cap N(v)|.
	$$
	Here  $c_0(u,v)=b_d(u,v)=0$. The graph $\Gamma$ is called \textit{distance-regular} if  $c_i(u,v)$, $b_i(u,v)$ and $a_i(u,v)$ only depend on the distance $i$ between $u$ and $v$ but not the choice of $u,v$.
	
	For a distance-regular graph $\Gamma$ with diameter $d$, we denote  $c_i=c_i(u,v)$, $a_i=a_i(u,v)$ and $b_i=b_i(u,v)$, where $u,v\in V(\Gamma)$ with $\partial(u,v)=i$. We also denote $k_i=|N_i(v)|$, where $v\in V(\Gamma)$. Clearly, $k_i$ is independent of the choice of $v$. By definition, $\Gamma$ is a regular graph with valency $k=b_0$, and
	$a_i+b_i+c_i=k$ for $0\leq i\leq d$.  The array $\{b_0,b_1,\ldots,b_{d-1};c_1,c_2,\ldots,c_d\}$ is called the \textit{intersection array} of $\Gamma$. In particular, $\lambda=a_1$ is the number of common neighbors between two adjacent vertices in $\Gamma$, and $\mu=c_2$ is the number of common neighbors between two vertices  at distance $2$ in $\Gamma$.  A distance-regular graph on $n$ vertices with valency $k$ and diameter $2$ is also called a  \textit{strongly regular graph} with parameters $(n,k,\lambda=a_1,\mu=c_2)$. 
	
	The concept of  distance-regular graph was introduced by Biggs  \cite{B74}. In the past several decades, distance-regular graphs played an important role in the study of design theory and coding theory, and were closely linked to some other subjects such as finite group theory, finite
	geometry, representation theory, and association schemes. Moreover, distance-regular graphs have applications in several fields  such as (quantum) information
	theory, diffusion models, (parallel) networks, and even finance. For systematic studies on  distance-regular graphs, we refer the reader to the famous monograph  \cite{BCN89} and the nice survey paper \cite{DKT16}.
	
	As a subclass of distance-regular graphs, strongly regular graphs have aroused a lot of interest, and the
	subject concerns beautifully regular structures, studied mostly using spectral
	methods, group theory, geometry and sometimes lattice theory \cite{BV22}.  It is known that  strongly regular Cayley graphs  are equivalent to regular partial difference sets, while partial difference sets are closely related to association schemes of partially
	balanced incomplete block designs with two associate classes  \cite{M84,M94}.  As we know,  strongly regular Cayley graphs over cyclic groups  were determined by Bridges and Mena \cite{BM79}, Ma \cite{M84}, and partially by Maru\v{s}i\v{c} \cite{M89}, and strongly regular Cayley graphs over $\mathbb{Z}_{p^n}\oplus \mathbb{Z}_{p^n}$ ($p$ is an odd prime) were classified by Leifman and Muzychuck \cite{LM05}. However, the strongly regular Cayley graphs over general groups, even for abelian groups, are far from being completely characterized.
	
	As an extension of the problem of characterizing strongly regular Cayley graphs, Miklavi\v{c} and Poto\v{c}nik  \cite{MP07} (see also \cite[Problem 71]{DKT16}) proposed the following general problem.
	\begin{problem}\label{prob::main}
		For a class of groups $\mathcal{G}$, determine all distance-regular graphs, which are Cayley graphs on a group in $\mathcal{G}$.
	\end{problem}
	
	Up to now, there have been some  advances on Problem \ref{prob::main}. Miklavi\v{c} and Poto\v{c}nik \cite{MP03,MP07} (almost) classified the distance-regular Cayley graphs over cyclic groups and dihedral groups. Miklavi\v{c} and \v{S}parl \cite{MS14,MS20} characterized the distance-regular Cayley graphs  on abelian groups or generalized dihedral groups under the condition that the connection set is minimal with respect to some element. Abdollahi, van Dam and Jazaeri \cite{ADJ17} determined the distance-regular Cayley graphs of diameter at most three with least eigenvalue $-2$.  van Dam and Jazaeri \cite{DJ19,DJ21} determined some distance-regular Cayley graphs with small valency, and provided some characterizations for  bipartite distance-regular Cayley graphs with diameter  $3$ or $4$. Very recently, Huang, Das and Lu \cite{HD22,HDL23} gave a partial characterization of distance-regular Cayley graphs over dicyclic groups, and determined all distance-regular Cayley graphs over generalized dicyclic groups under the condition that the connection set is minimal with respect to some element.
	
	In this paper, we mainly consider Problem \ref{prob::main} for   $p$-groups with a cyclic subgroup of index $p$. More specifically, we  determine all distance-regular Calyley graphs over $\mathbb{Z}_{p^s}\oplus\mathbb{Z}_{p}$, where $p$ is an odd prime and $s\geq 1$. The main result is as follows.

	\begin{theorem}\label{thm::main}
		Let $p$ be an odd prime, and let  $\Gamma$ be a Cayley graph over $\mathbb{Z}_{p^s}\oplus\mathbb{Z}_{p}$ with $s\geq 1$. Then $\Gamma$ is distance-regular if and only if it is isomorphic to one of the following graphs:
		\begin{enumerate}[$(i)$]\setlength{\itemsep}{0pt}
			\item the complete graph $K_{p^{s+1}}$;
			\item the complete multipartite graph $K_{t\times m}$ with  $tm=p^{s+1}$, which is the complement of the union of $t$ copies of $K_m$;
			\item the graph $\mathrm{Cay}(\mathbb{Z}_{p}\oplus\mathbb{Z}_{p},S)$ with $S=\cup_{i=1}^rH_i\setminus\{(0,0)\}$, where $2\leq r\leq p-1$, and $H_i$ ($i=1,\ldots,r$) are subgroups of order $p$ in $\mathbb{Z}_{p}\oplus\mathbb{Z}_{p}$.
		\end{enumerate}
		In particular, the graph in (iii) is the line graph of a transversal design $TD(r,p)$, which is a strongly regular graph with parameters $(p^2,r(p-1),p+r^2-3r,r^2-r)$.
	\end{theorem}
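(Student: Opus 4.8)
The plan is to prove the two implications separately. For the ``if'' direction, the graphs in $(i)$ and $(ii)$ are plainly distance-regular, and $K_{t\times m}$ with $tm=p^{s+1}$ is $\mathrm{Cay}(\mathbb{Z}_{p^s}\oplus\mathbb{Z}_p,\,G\setminus H)$ for a subgroup $H$ of order $m$. For $(iii)$ I would compute in the group ring $\mathbb{Z}[\mathbb{Z}_p\oplus\mathbb{Z}_p]$: since $H_1,\dots,H_r$ are order-$p$ subgroups meeting pairwise only in $0$, we have $\underline{H_i}^{\,2}=p\,\underline{H_i}$ and $\underline{H_i}\,\underline{H_j}=\underline{G}$ for $i\ne j$, so expanding $\underline{S}^{\,2}$ with $\underline{S}=\sum_{i=1}^{r}\underline{H_i}-r$ shows that $S$ is a regular partial difference set with parameters $(p^2,\,r(p-1),\,p+r^2-3r,\,r^2-r)$; $\mathrm{Cay}(\mathbb{Z}_p\oplus\mathbb{Z}_p,S)$ is then recognised as the collinearity graph of the net on $\mathbb{Z}_p\oplus\mathbb{Z}_p$ whose lines in direction $H_i$ are the cosets of $H_i$, that is, the line graph of the dual $TD(r,p)$.

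For the ``only if'' direction, let $\Gamma=\mathrm{Cay}(G,S)$ be distance-regular with $G=\mathbb{Z}_{p^s}\oplus\mathbb{Z}_p$; connectedness gives $\langle S\rangle=G$. First I would reduce via the radical $R=\{g\in G:g+S=S\}$. If $R\ne\{0\}$, then $R$-invariance forces $S\cap R=\varnothing$, so $S$ is a union of nonzero $R$-cosets and $\Gamma$ is the lexicographic product $\mathrm{Cay}(G/R,\overline{S})[\overline{K_{|R|}}]$; a short argument (any two vertices at distance $2$ in $\mathrm{Cay}(G/R,\overline{S})$ would be forced to have identical neighbourhoods) shows $\mathrm{Cay}(G/R,\overline{S})$ is complete, hence $S=G\setminus R$ and $\Gamma\cong K_{t\times m}$ with $m=|R|$ — this is case $(ii)$. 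So we may assume $R=\{0\}$, i.e.\ the characters $\chi$ of $G$ with $\eta_\chi:=\sum_{g\in S}\chi(g)\ne0$ have trivial common kernel.

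Next I would show the diameter is at most $2$. An imprimitive distance-regular graph is bipartite or antipodal; bipartiteness is impossible since $|G|=p^{s+1}$ is odd, while in the antipodal case the antipodal fibres form a subgroup $N$ with $N\cap S=\varnothing$ and $\mathrm{Cay}(G/N,\overline{S})$ distance-regular of smaller order, so that an induction on $|G|$ together with the structure of antipodal covers (and $R=\{0\}$) eliminates this possibility. For a primitive $\Gamma$ of diameter $d\ge3$ one exploits that $G$ has a cyclic subgroup of index $p$ (so $\Gamma$ is ``almost'' a circulant), combining the near-classification of distance-regular circulants with multiplicity/feasibility constraints on the intersection array; alternatively, this step and the next can be packaged through the Schur ring generated by the distance classes $\underline{D_0},\dots,\underline{D_d}$ and a classification of Schur rings over $\mathbb{Z}_{p^s}\oplus\mathbb{Z}_p$, retaining only the $P$-polynomial ones. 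Thus $\Gamma$ is either $K_{p^{s+1}}$ or strongly regular, and in the latter case $S$ is a regular partial difference set with $\mathrm{rad}(S)=\{0\}$.

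The core is then a Fourier/Galois analysis of this partial difference set. Its nontrivial eigenvalues take exactly two values $\theta_1>\theta_2$, which are real algebraic integers in some $\mathbb{Z}[\zeta_{p^j}]$; since $\mathrm{Gal}(\mathbb{Q}(\zeta_{p^j})/\mathbb{Q})$ acts by $\chi\mapsto\chi^a$ with $\eta_{\chi^a}=\sigma_a(\eta_\chi)$, each $\eta_\chi$ has a Galois orbit of size at most $2$ and so is fixed by an index-$\le2$ subgroup of $(\mathbb{Z}/p^j\mathbb{Z})^{\times}$. For $s\ge2$, propagating this rigidity up the tower $\mathbb{Q}\subset\mathbb{Q}(\zeta_p)\subset\cdots\subset\mathbb{Q}(\zeta_{p^s})$ and applying Fourier inversion along the cyclic Frattini subgroup $\Phi(G)=pG\cong\mathbb{Z}_{p^{s-1}}$ forces $S$ to be periodic modulo a nontrivial subgroup unless $S=G\setminus\{0\}$; hence for $s\ge2$ only $K_{p^{s+1}}$ arises. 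For $s=1$ the same computation over $\mathbb{Z}[\zeta_p]$ shows that a partial difference set with trivial radical in $\mathbb{Z}_p\oplus\mathbb{Z}_p$ must be invariant under all scalar multiplications fixing each line, hence equals $\bigcup_{i=1}^{r}H_i\setminus\{0\}$ for order-$p$ subgroups $H_i$; connectedness together with $S\ne G\setminus\{0\}$ gives $2\le r\le p-1$ (the values $r\in\{1,p,p+1\}$ recovering cases $(ii)$ and $(i)$), and the line-graph identification from the ``if'' part finishes the proof. I expect the $s\ge2$ Fourier/Galois step — converting Galois-stability of the two eigenvalues into genuine periodicity of $S$ modulo a nontrivial subgroup, which is exactly where the ``cyclic subgroup of index $p$'' hypothesis is used — to be the main obstacle, with the exclusion of antipodal and primitive high-diameter schemes as the secondary difficulty.
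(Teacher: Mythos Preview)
Your overall architecture is reasonable, but you have misidentified where the real work lies, and one of your reductions has a genuine gap.

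\textbf{The antipodal case is the crux, and your treatment of it does not go through.} You write that in the antipodal case ``an induction on $|G|$ together with the structure of antipodal covers (and $R=\{0\}$) eliminates this possibility''. Induction on the quotient tells you only that $\overline{\Gamma}$ is on the list; when $\overline{\Gamma}$ is a complete graph this gives $d\in\{2,3\}$, and $d=3$ is in no way excluded by what you have said. Complete graphs have many distance-regular antipodal $p$-fold covers, so nothing generic rules this out, and the condition $R=\{0\}$ is irrelevant here: the fibre subgroup $N=\{0\}\cup\mathcal{N}_d$ need not lie in the radical of $S$. In the paper this is exactly the main technical lemma: one shows directly, via Fourier analysis on $\mathbb{Z}_{p^s}$ and non-existence results for abelian difference sets of prime-power order (Leung--Ma--Schmidt), that no antipodal non-bipartite diameter-$3$ Cayley graph over $\mathbb{Z}_{p^s}\oplus\mathbb{Z}_p$ exists. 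Your plan contains no substitute for this argument.

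\textbf{The primitive case for $s\ge 2$ is not handled.} Your suggestion to ``exploit that $G$ has a cyclic subgroup of index $p$'' together with feasibility constraints is not a proof; the distance classes of a primitive distance-regular Cayley graph form a primitive Schur ring over $G$, and the paper invokes Kochend\"orffer's theorem (there is no nontrivial primitive Schur ring over $\mathbb{Z}_{p^a}\oplus\mathbb{Z}_{p^b}$ with $a>b$) to conclude $\Gamma\cong K_{p^{s+1}}$ immediately. You mention Schur rings as an alternative, which is the right idea, but you should commit to it; your ``feasibility'' route is not a viable replacement.

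\textbf{Your ``core'' is not where the difficulty is.} Once Kochend\"orffer handles the primitive case, a primitive strongly regular $\Gamma$ with $s\ge 2$ is already complete; and an imprimitive strongly regular graph is complete multipartite, which has nontrivial radical. So the Fourier/Galois programme you outline for partial difference sets with trivial radical over $\mathbb{Z}_{p^s}\oplus\mathbb{Z}_p$, $s\ge 2$, is unnecessary in the paper's scheme. If you insist on that route instead of Kochend\"orffer, you are essentially reproving a special case of it, and your sketch (``propagating rigidity up the cyclotomic tower forces periodicity'') is not yet an argument. Your treatment of $s=1$ and of the ``if'' direction is fine and close to the paper's.
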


	\section{Preliminaries}\label{section::2}
	
	In this section, we review some notations and results about distance-regular graphs, which are powerful in the proof of Theorem \ref{thm::main}.

	Let $\Gamma$ be a graph, and let $\mathcal{B}=\{B_1,\ldots,B_\ell\}$ be a partition of $V(\Gamma)$ (here $B_i$ are called \textit{blocks}). The \textit{quotient graph} of $\Gamma$ with respect to $\mathcal{B}$, denoted by $\Gamma_\mathcal{B}$, is the graph with vertex set $\mathcal{B}$, and with  $B_i,B_j$ ($i\neq j$) adjacent if and only if there exists at least one edge between  $B_i$ and $B_j$ in $\Gamma$. Moreover, we say that $\mathcal{B}$ is an \textit{equitable partition} of $\Gamma$ if there are integers $b_{ij}$ ($1\leq i,j\leq \ell$) such that every vertex in $B_i$ has exactly $b_{ij}$ neighbors in $B_j$. In particular, if every block of $\mathcal{B}$ is an independent set, and between any two blocks there are either no edges or there is a perfect matching, then $\mathcal{B}$ is an equitable partition of  $\Gamma$. In this situation,  $\Gamma$ is called a \textit{cover} of its quotient graph $\Gamma_\mathcal{B}$, and the blocks are called \textit{fibres}. If $\Gamma_\mathcal{B}$ is connected, then all fibres have the same size, say $r$, called \textit{covering index}.

	A graph $\Gamma$ of diameter $d$ is \textit{antipodal} if the relation $\mathcal{R}$ on $V(\Gamma)$ defined by $u\mathcal{R}v\Leftrightarrow\partial(u,v)\in\{0,d\}$ is an equivalence relation, and the corresponding equivalence classes are called \textit{antipodal classes}. A cover of index $r$, in which the fibres are antipodal classes, is called \textit{an $r$-fold  antipodal cover} of its quotient.

	Suppose that $\Gamma$ is a distance-regular graph with diameter $d$. For  $i\in\{1,\ldots,d\}$, the \textit{$i$-th distance graph} $\Gamma_i$  is  the graph with vertex set  $V(\Gamma)$ in which two distinct vertices are adjacent if and only if they are at distance $i$ in $\Gamma$. It is  known that an imprimitive distance-regular graph with valency at least $3$ is either bipartite, antipodal, or both \cite[Theorem 4.2.1]{BCN89}. If $\Gamma$ is a bipartite distance-regular graph, then $\Gamma_2$ has two connected components, which are called the \textit{halved graphs} of $\Gamma$ and denoted by $\Gamma^+$ and $\Gamma^-$. For convenience, we use  $\frac{1}{2}\Gamma$ to represent any one of these two graphs. If $\Gamma$ is an antipodal distance-regular graph, then all antipodal classes have the same size, say $r$, and form an equitable partition $\mathcal{B}^\ast$ of $\Gamma$. The quotient graph $\overline{\Gamma}:=\Gamma_{\mathcal{B}^\ast}$ is called the \textit{antipodal quotient} of $\Gamma$. If $d=2$, then $\Gamma$ is a complete multipartite graph. If $d\geq 3$, then the edges between  two distinct antipodal classes of $\Gamma$ form an empty set or a perfect matching. Thus $\Gamma$ is an $r$-fold antipodal  cover of $\overline{\Gamma}$ with the antipodal classes as its fibres. 
	
	
	\begin{lemma}(\cite[Proposition 4.2.2]{BCN89}) \label{lem::imprimitive}
		Let $\Gamma$ denote an imprimitive distance-regular graph with diameter $d$ and valency $k \geq 3$. Then the following hold.
		\begin{enumerate}[$(i)$]\setlength{\itemsep}{0pt}
			\item If $\Gamma$ is bipartite, then the halved graphs of $\Gamma$ are non-bipartite distance-regular graphs with diameter $\lfloor\frac{d}{2}\rfloor$.
			\item If $\Gamma$  is antipodal, then $\overline{\Gamma}$ is a distance-regular graph with diameter $\lfloor\frac{d}{2}\rfloor$.
			\item  If $\Gamma$ is antipodal, then $\overline{\Gamma}$ is not antipodal, except when $d\leq 3$  (in that case $\overline{\Gamma}$  is a complete graph), or when $\Gamma$ is bipartite with $d = 4$ (in that case $\overline{\Gamma}$ is a complete bipartite graph).
			
			\item If $\Gamma$ is antipodal and has odd diameter or is not bipartite, then $\overline{\Gamma}$ is primitive.
			\item If $\Gamma$ is bipartite and has odd diameter or is not antipodal, then the halved graphs of $\Gamma$ are primitive.
			\item If $\Gamma$ has even diameter and is both bipartite and antipodal, then $\overline{\Gamma}$ is bipartite. Moreover, if $\frac{1}{2}\Gamma$ is a halved graph of $\Gamma$, then it is antipodal, and $\overline{\frac{1}{2}\Gamma}$ is primitive and isomorphic to $\frac{1}{2}\overline{\Gamma}$.
		\end{enumerate}
	\end{lemma}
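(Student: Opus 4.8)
The plan is to exploit two operations on the Bose--Mesner algebra of $\Gamma$ (equivalently, on its intersection numbers): a ``halving'', available when $\Gamma$ is bipartite, and a ``folding'', available when $\Gamma$ is antipodal. By the classification result quoted just above, an imprimitive $\Gamma$ with $k\ge 3$ falls into (at least) one of these two situations, so I would analyse the two operations in parallel and then decide when they can be iterated.

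For $(i)$, write $V(\Gamma)=X^+\sqcup X^-$ for the bipartition. Since every walk alternates between $X^+$ and $X^-$, any two vertices in one part are at even distance and a shortest path of $\Gamma^\pm$ lifts to a path of $\Gamma$ of twice its length; hence $\partial_{\Gamma^\pm}(u,v)=\tfrac12\partial_\Gamma(u,v)$ whenever $u,v$ lie in the same part, so $\operatorname{diam}(\Gamma^\pm)=\lfloor d/2\rfloor$. Distance-regularity then follows because the $0/1$ matrices $I|_{X^+},A_2|_{X^+},A_4|_{X^+},\dots$ are exactly the distance matrices of $\Gamma^+$ and span a commutative subalgebra of the Bose--Mesner algebra of $\Gamma$ of the required dimension $\lfloor d/2\rfloor+1$, i.e.\ an association scheme; equivalently, one checks directly that for $u,v\in X^+$ with $\partial_\Gamma(u,v)=2i$ the number of $w\in X^+$ with $\partial_\Gamma(u,w)=2i-2$ and $\partial_\Gamma(v,w)=2$ depends only on $i$, using that $\Gamma$ is distance-regular. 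For non-bipartiteness: on $X^+$ we have $A_2|_{X^+}=\tfrac{1}{c_2}\bigl(A^2|_{X^+}-kI\bigr)$ (as $a_1=0$), so the eigenvalues of $\Gamma^+$ are $(\theta^2-k)/c_2$ with $\theta$ ranging over the nonnegative eigenvalues of $\Gamma$, and its valency is $k(k-1)/c_2$; were $\Gamma^+$ bipartite, $-k(k-1)/c_2$ would be among those eigenvalues, forcing $\theta^2=k(2-k)<0$ for $k\ge 3$, a contradiction.

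For $(ii)$, the relation ``$\partial(u,v)\in\{0,d\}$'' being an equivalence relation on a distance-regular graph forces every antipodal class to consist of a vertex together with its $k_d$ antipodes, so all classes have the common size $r=k_d+1$ and (for $d\ge 2$) are independent; if $d\ge 3$ and $u$ had two neighbours $b_1\ne b_2$ in another class $B$, then $\partial(b_1,b_2)\le 2$ would contradict $\partial(b_1,b_2)=d$, so the edges between two classes form a perfect matching or nothing and $\mathcal{B}^\ast$ is equitable. The distances from a fixed vertex to a class take only the two values $i$ and $d-i$, so $\partial_{\overline\Gamma}(A,B):=\min\bigl(\partial_\Gamma(a,b),\,d-\partial_\Gamma(a,b)\bigr)$ is well defined, $\operatorname{diam}(\overline\Gamma)=\lfloor d/2\rfloor$, and the ``folding'' $A_i+A_{d-i}\mapsto\overline{A_i}$ (with $A_{d/2}\mapsto\overline{A_{d/2}}$ when $d$ is even) maps the Bose--Mesner algebra of $\Gamma$ onto that of $\overline\Gamma$; concretely $\overline{b_i}=b_i$ and $\overline{c_i}=c_i$ for $i<\lfloor d/2\rfloor$, with only the last entry altered, so $\overline\Gamma$ is distance-regular of diameter $\lfloor d/2\rfloor$.

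For $(iii)$--$(vi)$ I would combine the arithmetic criteria ``$\Gamma$ bipartite $\iff a_i=0$ for all $i$'' and ``$\Gamma$ antipodal $\iff$ its intersection array is symmetric under $i\mapsto d-i$ away from the ends and the middle term'' with the folding/halving relations above. If $\overline\Gamma$ (of diameter $e=\lfloor d/2\rfloor\ge 2$) were again antipodal, then substituting $\overline{b_i}=b_i$, $\overline{c_i}=c_i$ into the symmetry of $\overline\Gamma$'s array, on top of the symmetry of $\Gamma$'s own array, over-determines the parameters and leaves only $d\le 3$ (where $\overline\Gamma=K_m$) or the bipartite $d=4$ case (where $\overline\Gamma=K_{m,m}$); an analogous computation shows $\overline\Gamma$ is not bipartite except in the stated cases, which with $(ii)$ gives $(iv)$, and the mirror-image arguments for the halved graphs give $(v)$. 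For $(vi)$, when $d$ is even an antipodal class lies entirely in $X^+$ or entirely in $X^-$ (its two vertices are at the even distance $d$), so $\mathcal{B}^\ast$ refines the bipartition: this makes $\overline\Gamma$ bipartite, makes $\tfrac12\Gamma$ antipodal with the classes inside $X^+$ as fibres, and makes folding and halving commute, so $\overline{\tfrac12\Gamma}\cong\tfrac12\overline\Gamma$, which is primitive by $(iv)$/$(v)$ applied one level down. I expect the real obstacle to be exactly this last block $(iii)$--$(vi)$: tracking which intersection numbers survive folding/halving and which are altered, and then eliminating every diameter outside the short exceptional list, is an unavoidably case-heavy computation rather than a single clean argument.
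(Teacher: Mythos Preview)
The paper does not prove this lemma at all: it is quoted as \cite[Proposition 4.2.2]{BCN89} and used as a black box, with no argument given in the paper itself. There is therefore nothing in the paper to compare your proposal against.

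That said, your sketch is essentially the standard route taken in \cite{BCN89}: analyse the halving and folding operations on the Bose--Mesner algebra, read off the diameters, and then chase the intersection-number relations to rule out iterated imprimitivity. Your treatment of $(i)$ and $(ii)$ is sound; the eigenvalue argument for non-bipartiteness of the halved graph is correct. For $(iii)$--$(vi)$ you are right that the work is a parameter computation: one uses the antipodality criterion $b_i=c_{d-i}$ (for $i\ne\lfloor d/2\rfloor$) together with the monotonicity $c_1\le c_2\le\cdots$ and $b_0\ge b_1\ge\cdots$ to force the exceptional diameters, and this is exactly the ``case-heavy'' part you flag. So your outline matches the reference the paper cites, but the paper itself contributes no proof here.
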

	The following lemma is an immediate  consequence of \cite[Proposition 5.1.1]{BCN89}.
	\begin{lemma}\label{lem::k1=k2}
		Let $\Gamma$ denote a distance-regular graph with diameter $d$ and valency $k>2$. If $k_2<k_1$ then $d\leq2$. Moreover, if $k_2=k_1$, then either $d=2$, or $d=3$ and $X$ is a $2$-fold antipodal  cover of the complete graph.  
	\end{lemma}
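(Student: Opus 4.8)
The plan is to read everything off the identity $k_i=k_{i-1}\,b_{i-1}/c_i$ $(1\le i\le d)$ together with the monotonicity relations $k=b_0>b_1\ge\cdots\ge b_{d-1}$, $1=c_1\le c_2\le\cdots\le c_d$ and the inequalities $c_i\le b_j$ (valid whenever $i+j\le d$) of \cite[Proposition 5.1.1]{BCN89}. From the identity, $k_1=b_0=k$ and $k_2=kb_1/c_2$, so $k_2<k_1$ is equivalent to $c_2>b_1$ and $k_2=k_1$ is equivalent to $c_2=b_1$. For the first assertion, if $d\ge 3$ then taking $i=2$, $j=1$ in $c_i\le b_j$ (legitimate since $2+1\le d$) gives $c_2\le b_1$, hence $k_2\ge k_1$; contraposing, $k_2<k_1$ forces $d\le 2$.

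Now suppose $k_2=k_1$, i.e.\ $b_1=c_2=:\mu$, and $d\ge 3$. To exclude $d\ge 4$ I would invoke the fact (part of \cite[Proposition 5.1.1]{BCN89}) that for a distance-regular graph which is not a polygon the numbers $k_i$ \emph{strictly} increase up to $k_{\lfloor d/2\rfloor}$; since $k>2$, $\Gamma$ is not a polygon, so $d\ge 4$ would give $k_1<k_2$, a contradiction. (If one only has the weak monotonicity, the same conclusion is reached more laboriously: for $d\ge 4$ the inequalities $c_2\le b_2\le b_1=c_2$ and $c_2\le c_3\le b_1=c_2$ propagate to $b_1=\cdots=b_{d-2}=c_2=\cdots=c_{d-1}=\mu$, hence $k_1=\cdots=k_{d-1}=k$ and $a_2=k-2\mu\ge 0$; this is then ruled out via the imprimitivity trichotomy \cite[Theorem 4.2.1]{BCN89} — the bipartite case forces $a_1=0$, i.e.\ $\mu=k-1$, whence $a_2=2-k<0$, and the antipodal case yields an incompatible intersection array.) Therefore $d=3$.

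Finally, for $d=3$ with $b_1=c_2=\mu$ the aim is to show the intersection array is $\{k,\mu,1;1,\mu,k\}$, equivalently $k_3=1$, equivalently $\Gamma$ is a $2$-fold antipodal cover of a complete graph. By \cite[Theorem 4.2.1]{BCN89}, $\Gamma$ is bipartite, antipodal, or primitive. If $\Gamma$ is bipartite, then (as $d=3$) any two vertices in the same part lie at distance $2$, so each halved graph is complete of order $n/2$, giving $k_2=n/2-1$; combined with $k_1+k_3=n/2$ and $k_1=k_2$ this yields $k_3=1$. If $\Gamma$ is antipodal, then (as $d=3$) its antipodal quotient is complete by Lemma~\ref{lem::imprimitive}(iii), so $\Gamma$ is an $r$-fold antipodal cover of some $K_m$, and a short computation with the covering index ($k=m-1$, $k_3=r-1$, $k_2=(r-1)k$) shows $k_1=k_2$ only when $r=2$. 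The primitive case has to be excluded by hand, and this is the genuinely delicate point: the monotonicity alone gives only $1\le b_2$ and $c_3\le k$, so pinning down $b_2=1$, $c_3=k$ requires bringing in feasibility of the intersection array (positivity and integrality of the eigenvalue multiplicities of $\Gamma$, which already kill arrays such as $\{6,2,1;1,2,3\}$) rather than \cite[Proposition 5.1.1]{BCN89} by itself. Once $k_3=1$, the distance-$3$ graph of $\Gamma$ is a perfect matching whose edges are precisely the antipodal classes, so $\Gamma$ is a $2$-fold antipodal cover of $K_{n/2}$.
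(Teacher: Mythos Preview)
The paper does not prove this lemma: it records it as ``an immediate consequence of \cite[Proposition~5.1.1]{BCN89}'' and moves on. So there is no argument in the paper to compare yours against, only the citation.

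Your treatment of the first assertion is correct and is exactly the intended one-line deduction from the inequality $c_i\le b_j$ for $i+j\le d$.

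For the second assertion your write-up contains genuine gaps, some of which you yourself flag. The claim that the $k_i$ are \emph{strictly} increasing up to $k_{\lfloor d/2\rfloor}$ for non-polygons is not one of the stated inequalities in \cite[Proposition~5.1.1]{BCN89}; that proposition yields the weak monotonicity (unimodality of the $k_i$) via $c_i\le b_{i-1}$ when $2i-1\le d$, but equality is not excluded there. Your fallback argument for $d\ge 4$ correctly derives $b_1=\cdots=b_{d-2}=c_2=\cdots=c_{d-1}$, but then invokes the imprimitivity trichotomy without first establishing imprimitivity: you handle the bipartite branch cleanly and wave at the antipodal branch, yet say nothing about the primitive possibility, which is precisely what must be ruled out. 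The same lacuna recurs at $d=3$: your bipartite and antipodal computations are fine, but the primitive case is left to an unspecified ``feasibility'' check on multiplicities. Exhibiting that $\{6,2,1;1,2,3\}$ is infeasible does not dispose of the whole two-parameter family $\{k,\mu,b_2;1,\mu,c_3\}$ with $b_2>1$ or $c_3<k$; a uniform argument is required. As written, the proof is therefore incomplete. If you want a self-contained argument rather than a citation, the missing step is to show directly that $b_1=c_2$ together with $d\ge 3$ forces $b_2=1$ and $c_3=k$ (hence $k_3=1$, hence antipodality with fibres of size~$2$, hence also $d=3$); this is the substantive content behind the paper's bare reference to \cite[Proposition~5.1.1]{BCN89}.
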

	
	\begin{lemma}(\cite[Theorem 6.2]{GH92})\label{lem::quotient_graph}
		Let $\Gamma$ be an antipodal distance-regular graph with diameter $d\geq 3$, and let $\mathcal{B}$ be an equitable partition of $\Gamma$ with each block contained in a
		fibre of $\Gamma$. Assume that no block of $\mathcal{B}$ is a single vertex, or a fibre. Then all
		blocks  of  $\mathcal{B}$ have the same size, and the  quotient graph $\Gamma_{\mathcal{B}}$ is an
		antipodal distance-regular graph with diameter $d$. Moreover, $\Gamma$ and $\Gamma_{\mathcal{B}}$ have isomorphic antipodal quotients.
	\end{lemma}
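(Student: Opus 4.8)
The plan is to exploit the fact that $\Gamma$, being antipodal and distance-regular of diameter $d\ge 3$, is a genuine $r$-fold antipodal cover of $\overline{\Gamma}$, and to show that $\mathcal B$ ``factors through'' this cover. Write $\pi\colon\Gamma\to\overline{\Gamma}$ for the projection onto antipodal classes; by the discussion preceding Lemma~\ref{lem::imprimitive}, the fibres $\pi^{-1}(\bar u)$ all have size $r$, each fibre is an independent set, and for two distinct fibres $F,F'$ the edges between them are either empty or a perfect matching $\phi_{FF'}\colon F\to F'$, the latter occurring exactly when $\pi(F)$ and $\pi(F')$ are adjacent in $\overline{\Gamma}$. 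Since $\Gamma$ and hence $\overline{\Gamma}$ are connected, for any two vertices $x,x'$ lying in a common fibre $F$ there is a walk in $\Gamma$ from $x$ to $x'$; projecting it to a closed walk at $\pi(F)$ and composing the matchings along it yields a permutation of $F$ carrying $x$ to $x'$. In other words, the group of all such permutations of $F$ is transitive; this is the standard holonomy description of connectivity for covering graphs, and it will be the engine for the uniformity claim.

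First I would show that the matchings respect $\mathcal B$. Fix a block $B\subseteq F$ and a fibre $F'$ with $\pi(F)\sim\pi(F')$. Each $x\in B$ has exactly one neighbour in $F'$, namely $\phi_{FF'}(x)$; hence for any block $B''\subseteq F'$ the number of neighbours of $x$ in $B''$ equals $1$ if $\phi_{FF'}(x)\in B''$ and $0$ otherwise. Equitability of $\mathcal B$ forces this number to be independent of $x\in B$, so $\phi_{FF'}(B)$ is contained in a single block of $F'$. Applying the same reasoning to $\phi_{FF'}^{-1}=\phi_{F'F}$ and using that the blocks partition the fibres, one concludes that $\phi_{FF'}$ maps the set of blocks contained in $F$ bijectively, and preserving sizes, onto the set of blocks contained in $F'$. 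Consequently the permutations of $F$ described above all permute the blocks contained in $F$ among themselves; since this permutation group is transitive on $F$, the blocks contained in $F$ form a block system for it, and therefore all have the same size $\ell$, and (transporting along a path in the connected graph $\overline{\Gamma}$) every fibre contains the same number $m=r/\ell$ of blocks. Because no block is a single vertex or a full fibre, $\ell\ge 2$ and $m\ge 2$.

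Next, $\Gamma_\mathcal B$ inherits a cover structure. Its vertex set is partitioned into the sets $\widehat F=\{\,B\in\mathcal B:\ B\subseteq F\,\}$, one for each fibre $F$ of $\Gamma$; each $\widehat F$ has size $m$ and is independent in $\Gamma_\mathcal B$, and by the previous paragraph $\phi_{FF'}$ induces a perfect matching $\widehat F\to\widehat F'$ precisely when $\pi(F)\sim\pi(F')$ (and there are no edges between $\widehat F$ and $\widehat F'$ otherwise). Hence $\Gamma_\mathcal B$ is itself a connected $m$-fold cover of $\overline{\Gamma}$ with the $\widehat F$ as fibres, and symmetrically the partition $\mathcal B$ realises $\Gamma$ as a connected $\ell$-fold cover of $\Gamma_\mathcal B$. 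It remains to show that $\Gamma_\mathcal B$ is distance-regular with diameter $d$ and with the $\widehat F$ as antipodal classes. For this I would use that in a connected covering $\Gamma\to\Gamma_\mathcal B$ the distance in $\Gamma_\mathcal B$ between two blocks $B,B'$ equals the minimum of $\partial_\Gamma(x,y)$ over $x\in B,\ y\in B'$, and that a geodesic downstairs lifts to a geodesic upstairs; combining this with the distance-regularity of $\Gamma$ one can read off the intersection numbers of $\Gamma_\mathcal B$ at a pair $(B,B')$ from those of $\Gamma$ and check they depend only on $\partial_{\Gamma_\mathcal B}(B,B')$. Since $m\ge 2$, this exhibits $\Gamma_\mathcal B$ as a nontrivial antipodal cover of $\overline{\Gamma}$ whose antipodal classes are exactly the $\widehat F$; hence $\Gamma_\mathcal B$ has the same diameter $d$ as $\Gamma$ and $\overline{\Gamma_\mathcal B}\cong\overline{\Gamma}$, which is the final assertion.

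The main obstacle is this last step. Passing from an equitable partition whose blocks are independent and whose inter-block edges are matchings to the conclusion that the quotient is \emph{distance-regular}, and correctly identifying its diameter as $d$ rather than some smaller value, is not formal: one must verify that the covering map $\Gamma\to\Gamma_\mathcal B$ interacts well enough with geodesics and with neighbourhood counts for the intersection numbers to descend, and that the fibres $\widehat F$ coincide with the antipodal relation of $\Gamma_\mathcal B$ rather than a finer equivalence. This is precisely the technical content of \cite[Theorem~6.2]{GH92}, established there through the theory of distance-regular covers; in the proof of Theorem~\ref{thm::main} we simply invoke it.
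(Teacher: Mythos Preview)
The paper does not prove this lemma at all: it is quoted verbatim as \cite[Theorem~6.2]{GH92} and used as a black box in the proof of Lemma~\ref{lem::key1}. So there is no ``paper's own proof'' to compare your proposal against; you are in effect sketching a proof of the Godsil--Hensel theorem itself.

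That said, your outline is sound up to the point you flag. The argument that the fibre-to-fibre matchings $\phi_{FF'}$ send blocks to blocks (via equitability and the $0$/$1$ dichotomy), and that the holonomy group of the cover $\Gamma\to\overline{\Gamma}$ is transitive on each fibre (by connectedness of $\Gamma$), are correct and together give the equal-block-size claim cleanly. Likewise, the description of $\Gamma_{\mathcal B}$ as an $m$-fold cover of $\overline{\Gamma}$ with fibres $\widehat F$ follows. Where you correctly hesitate is the final step: deducing that $\Gamma_{\mathcal B}$ is \emph{distance-regular} with the $\widehat F$ as its antipodal classes. The assertion ``one can read off the intersection numbers of $\Gamma_{\mathcal B}$ from those of $\Gamma$'' is not automatic from the covering property alone; one needs either the characterisation of distance-regular antipodal covers of a given distance-regular graph in terms of the intersection array (as in \cite{GH92}), or a direct count showing that for $B,B'$ at distance $i$ in $\Gamma_{\mathcal B}$ the numbers $c_i,a_i,b_i$ are well defined, which in turn requires controlling how many vertices of $B'$ lie at each $\Gamma$-distance from a fixed $x\in B$. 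You acknowledge this gap yourself, so your proposal is best read as an informative sketch that reduces the lemma to the technical core of \cite[Theorem~6.2]{GH92}, consistent with how the paper treats it.
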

	
	\begin{lemma}(\cite[p. 431]{BCN89})\label{lem::antipodal_DRG}
		Let $\Gamma$ be a non-bipartite $r$-fold antipodal distance-regular graph on $v$ vertices with  diameter $3$ and valency $k$. Then $v=r(k+1)$, $k=\mu(r-1)+\lambda+1$, and $\Gamma$ has  the intersection array $\{k,\mu(r-1),1;1,\mu,k\}$ and the spectrum $\{k^1,\theta_1^{m_1},\theta_2^k,\theta_3^{m_3}\}$, where
		\begin{equation*}
			\theta_1=\frac{\lambda-\mu}{2}+\delta,~~\theta_2=-1,~~\theta_3=\frac{\lambda-\mu}{2}-\delta,~~\delta=\sqrt{k+\left(\frac{\lambda-\mu}{2}\right)^2},
		\end{equation*}
		and
		\begin{equation*}
			m_1=-\frac{\theta_3}{\theta_1-\theta_3}(r-1)(k+1),~~m_3=\frac{\theta_1}{\theta_1-\theta_3}(r-1)(k+1).
		\end{equation*}
		Moreover, if $\lambda\neq\mu$, then all eigenvalues  of $\Gamma$ are integers.
	\end{lemma}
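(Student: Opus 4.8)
The plan is to recover every piece of data in the statement from the antipodal-cover structure of $\Gamma$ together with the standard theory of the intersection matrix. First I would work out the combinatorial parameters. Since $\Gamma$ is antipodal of diameter $d=3\ge 3$, the antipodal classes (fibres) form an equitable partition, any two distinct fibres are joined by an empty set of edges or a perfect matching, and all fibres have a common size $r$, as recalled in Section~\ref{section::2}; by Lemma~\ref{lem::imprimitive}(ii) the antipodal quotient $\overline{\Gamma}$ is distance-regular of diameter $\lfloor 3/2\rfloor=1$, hence a complete graph. Fixing a vertex $u$, every other vertex of its fibre is at distance $3$ from $u$ and nothing else is, so $k_3=r-1$; also two neighbours of $u$ cannot share a fibre (they would be at distance at most $2$, forcing them equal), so $u$ has neighbours in exactly $k$ fibres, i.e.\ $\overline{\Gamma}=K_{k+1}$. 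This gives $v=r(k+1)$ and then $k_2=v-1-k-k_3=k(r-1)$.

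Next I would determine the intersection array. For $u,v$ in a common fibre (so $\partial(u,v)=3$), every neighbour $w$ of $v$ lies at distance $2$ from $u$: it cannot be at distance $0$ or $1$, and it cannot be at distance $3$, since then $w$ would lie in $u$'s fibre while being adjacent to $v$. Hence $c_3=k$ and $a_3=b_3=0$. Plugging the values of $k_1,k_2,k_3$ into $k_ib_i=k_{i+1}c_{i+1}$ gives $k(r-1)b_2=(r-1)k$, so $b_2=1$, and $kb_1=k(r-1)\mu$, so $b_1=(r-1)\mu$; consequently $\lambda=a_1=k-c_1-b_1=k-1-(r-1)\mu$, that is $k=\mu(r-1)+\lambda+1$, and the intersection array is $\{k,\mu(r-1),1;1,\mu,k\}$.

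For the spectrum I would use that a distance-regular graph of diameter $3$ has exactly four distinct eigenvalues, namely the eigenvalues of its tridiagonal intersection matrix, and separate them according to the cover structure. A vector constant on each fibre descends to an eigenvector of $\overline{\Gamma}=K_{k+1}$ with the same eigenvalue; this produces the valency $k$ (multiplicity $1$) and $\theta_2=-1$ (multiplicity at least $k$). The remaining $(r-1)(k+1)$ eigenvalues arise from eigenvectors that sum to zero on every fibre; for these the distance-$3$ matrix $A_3$ --- which, since $\Gamma_3$ is a disjoint union of cliques $K_r$, acts on each fibre block as $J_r-I_r$ --- acts as $-I$, so in the standard sequence $(u_i)$ of such an eigenvalue $\theta$ we have $u_3=-1/(r-1)$. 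Combining this with $u_0=1$, $u_1=\theta/k$ and the three-term recurrence $c_iu_{i-1}+a_iu_i+b_iu_{i+1}=\theta u_i$ (and using $k=\mu(r-1)+\lambda+1$) forces the quadratic $\theta^2-(\lambda-\mu)\theta-k=0$, whose roots are $\theta_1,\theta_3=\tfrac{\lambda-\mu}{2}\pm\delta$ with $\delta=\sqrt{k+(\tfrac{\lambda-\mu}{2})^2}$; since their product is $-k<0$ and neither equals $k$ nor $-1$ (using $\mu\ge1$ and $\lambda\le k-1$), they are distinct from $\theta_2$, whence $m_2=k$ exactly. The multiplicities $m_1,m_3$ then follow from $m_1+m_3=(r-1)(k+1)$ together with $\operatorname{tr}(A)=k-m_2+\theta_1m_1+\theta_3m_3=0$.

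Finally, the integrality assertion is a short rationality argument. If $\lambda\ne\mu$ but $\theta_1,\theta_3$ were irrational, they would be algebraic conjugates, hence would have equal multiplicity $m_1=m_3$; substituting into $\theta_1m_1+\theta_3m_3=0$ gives $(\theta_1+\theta_3)m_1=(\lambda-\mu)m_1=0$, which is impossible since $m_1>0$ and $\lambda\ne\mu$. So $\theta_1,\theta_3$ are rational roots of a monic integer polynomial, hence integers, and together with $k$ and $-1$ all eigenvalues of $\Gamma$ are integers. The one genuinely delicate step is the spectral computation: isolating the ``fibre-sum-zero'' eigenvalues via $u_3=-1/(r-1)$, running the recurrence so that the quadratic $\theta^2-(\lambda-\mu)\theta-k=0$ drops out cleanly, and checking that $-1$ does not occur among $\theta_1,\theta_3$ so that the multiplicity bookkeeping closes up; the parameter count in the first two paragraphs is routine once the cover structure is in hand.
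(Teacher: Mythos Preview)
Your proof is correct and follows the standard derivation one finds in the reference itself. Note, however, that the paper does not prove this lemma at all: it is quoted verbatim from \cite[p.~431]{BCN89} as background, so there is no ``paper's proof'' to compare against. Your argument is essentially the one implicit in that reference: recover the intersection array from the antipodal-cover structure and the recursions $k_ib_i=k_{i+1}c_{i+1}$, split the spectrum via vectors constant on fibres versus vectors summing to zero on fibres, and read off the quadratic $\theta^2-(\lambda-\mu)\theta-k=0$ for the latter (your recurrence computation works, but it is cleaner to use $A^2=kI+\lambda A+\mu A_2$ together with $A_2v=-Av$ on the sum-zero space). The integrality step via conjugate multiplicities is the standard one. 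One small point: your check that $-1\notin\{\theta_1,\theta_3\}$ is a bit terse; substituting $\theta=-1$ into the quadratic gives $-\mu r\ne 0$, which is the clean way to see it.
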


	A strongly regular graph with parameters $(n,k=\frac{n-1}{2},\lambda=\frac{n-5}{4},\mu=\frac{n-1}{4})$  ($n\equiv 1\pmod 4$) is called a \textit{conference graph}.  Let $\mathbb{F}_q$ denote the finite field of order $q$  where $q\equiv 1\pmod 4$ is a prime power. The \textit{Paley graph} $P(q)$ is defined as the graph with vertex set $\mathbb{F}_q$ in which two distinct vertices $u,v$ are adjacent if and only if $u-v$ is a square in the multiplicative group of $\mathbb{F}_q$. It is known that Paley graphs are comference graphs \cite{S62,ER63}.

	\begin{lemma}(\cite[p. 180]{BCN89})\label{lem::confer}
		Let $\Gamma$ be a conference graph (or particularly, Paley graph). Then $\Gamma$ has no distance-regular $r$-fold antipodal covers for $r>1$, except for the pentagon $C_5\cong P(5)$, which is covered by the decagon $C_{10}$.
	\end{lemma}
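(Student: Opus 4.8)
Suppose, for contradiction, that $\Delta$ is a distance-regular graph which is an $r$-fold antipodal cover of a conference graph $\Gamma$ on $n$ vertices, with $r\ge 2$; recall that $\Gamma$ has valency $k=\tfrac{n-1}{2}$ and distinct eigenvalues $k$ and $\theta^{\pm}=\tfrac{-1\pm\sqrt n}{2}$, the latter two of multiplicity $\tfrac{n-1}{2}=k$, and that $k\ge 2$ with $k=2\iff n=5\iff\Gamma=C_5$. Since $\overline{\Delta}=\Gamma$ has diameter $2$, Lemma~\ref{lem::imprimitive}(ii) forces $\lfloor d/2\rfloor=2$ for the diameter $d$ of $\Delta$, so $d\in\{4,5\}$. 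As $\Delta$ covers $\Gamma$ it has valency $k$, and by the standard relation between the spectrum of an antipodal cover and that of its quotient, the three eigenvalues of $\Gamma$ occur among the $d+1$ eigenvalues of $\Delta$ with unchanged multiplicities, the remaining (``new'') eigenvalues being distinct from them; in particular $\theta^{+}$ is an eigenvalue of $\Delta$ of multiplicity $k$. Throughout I would use two standard facts about antipodal covers: the fibrewise distance distribution (a fibre at quotient-distance $i$ meets the distance-balls around a fixed vertex in a prescribed way), and the cover–quotient identities $a_1(\Delta)=\overline{\lambda}$, $b_1(\Delta)=\overline{b_1}$.

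For $d=5$ the argument is short. For odd diameter the fibrewise distribution gives $k_i(\Delta)=\overline{k}_i(\overline{\Delta})$ for $i\le 2$, so $k_2(\Delta)=\overline{k}_2(\Gamma)=n-1-k=\tfrac{n-1}{2}=k=k_1(\Delta)$. If $k>2$, Lemma~\ref{lem::k1=k2} then forces $d\le 3$, a contradiction; hence $k=2$, $\Gamma=C_5$, and $\Delta$ is a connected $2$-regular graph, i.e.\ a cycle $C_{5r}$. The only such cycle that is antipodal with quotient $C_5$ is $C_{10}$ — exactly the stated exception.

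For $d=4$ note first that $\Delta$ is not bipartite (else $\overline{\Delta}$ would be complete bipartite). Order the spectrum $k=\theta_0>\theta_1>\theta_2=\theta^{+}>\theta_3>\theta_4=\theta^{-}$, with $\theta_1,\theta_3$ the new eigenvalues of multiplicities $m_1,m_3$. Expanding $\operatorname{tr}A=0$ and $\operatorname{tr}A^2=rnk$ via $\theta^{+}+\theta^{-}=-1$, $\theta^{+}\theta^{-}=\tfrac{1-n}{4}$ and $m_1+m_3=(r-1)n$ yields $\theta_1\theta_3=-k$, $\theta_1m_1+\theta_3m_3=0$, and $m_1=\tfrac{(r-1)nk}{\theta_1^2+k}$. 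Integrality of $m_1,m_3$ forces $\theta_1^2,\theta_3^2\in\mathbb{Z}$, and since $\theta_1+\theta_3\in\mathbb{Z}$ (the corresponding quadratic factor of the characteristic polynomial of the first intersection matrix $L_1$ has integer coefficients, using $n\equiv 1\pmod 4$), either $\theta_1=\sqrt k=-\theta_3$ or $\theta_1,\theta_3\in\mathbb{Z}$. In the first subcase $\theta_3>\theta_4$ gives $2\sqrt k-1<\sqrt n$, so $k<4$, i.e.\ $n=5$, $k=2$; but then $\Delta$ is a cycle $C_{5r}$ of diameter $4$, which is impossible. In the second subcase write $\theta_1=\ell>0$, $\theta_3=-c$, so $c\ell=k$ and $c\ge 2$ (as $\theta_1<\theta_0$). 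The fibrewise distances give $(k_0,\dots,k_4)=(1,\,k,\,rk,\,(r-1)k,\,r-1)$, whence $b_3=1$, $c_2=b_1/r$ and $b_2=\tfrac{(r-1)c_3}{r}$; the identity $u_i(\theta^{+})=u_{d-i}(\theta^{+})$ for the standard sequence of the pulled-back quotient eigenvector (so $u_1(\theta^{+})=u_3(\theta^{+})$) yields $c_2+b_2=\overline{\mu}=\tfrac{n-1}{4}$ and $a_2=\tfrac{n-1}{4}$; and $\operatorname{tr}L_1=\sum_i\theta_i$ gives $a_3=\ell-c$, hence $c_3=k-a_3-b_3=(c-1)(\ell+1)$. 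Combining $c_2=\tfrac{b_1}{r}$, $c_2+b_2=b_1$ and $b_2=\tfrac{(r-1)c_3}{r}$ forces $c_3=b_1=\tfrac{c\ell}{2}$, so $(c-1)(\ell+1)=\tfrac{c\ell}{2}$, equivalently $\ell(c-2)=-2(c-1)$ — impossible for $c\ge 2$, $\ell>0$. Thus $d=4$ cannot occur, and the only conference graph with a distance-regular antipodal cover of index $>1$ is $C_5$, covered by $C_{10}$.

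The reduction to $d\in\{4,5\}$, the $d=5$ case, and the eigenvalue-rationality step in the $d=4$ case are all brief; the main obstacle is the numerical analysis of the integral $d=4$ subcase, where one must assemble enough of the intersection array — from $a_1=\overline{\lambda}$, $b_1=\overline{b_1}$, the fibrewise distance distribution, the behaviour $u_i(\theta^{+})=u_{d-i}(\theta^{+})$ of the lifted quotient eigenvector, and $\operatorname{tr}L_1=\sum_i\theta_i$ — to extract the contradiction $\ell(c-2)=-2(c-1)$.
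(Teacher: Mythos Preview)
The paper does not prove this lemma at all; it merely quotes the result from \cite[p.~180]{BCN89}. So there is no ``paper's proof'' to compare against, and your argument should be judged on its own.

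Your proof is correct. The reduction to $d\in\{4,5\}$ via Lemma~\ref{lem::imprimitive}, the $d=5$ case via $k_2=k_1$ and Lemma~\ref{lem::k1=k2}, and the eigenvalue analysis for $d=4$ (splitting into $\theta_1=\sqrt{k}=-\theta_3$ versus $\theta_1,\theta_3\in\mathbb{Z}$) are all sound. The trace computations and the final Diophantine contradiction $\ell(c-2)=-2(c-1)$ check out.

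One remark on efficiency in the integral $d=4$ subcase: the eigenvector identity $u_1(\theta^{+})=u_3(\theta^{+})$ that you invoke to obtain $c_2+b_2=\overline{\mu}$ is valid but unnecessarily indirect. The fibrewise distance structure you already used gives $c_3=b_1$ directly: for $o,o'$ in the same fibre and $v$ adjacent to $o$ (hence at distance $3$ from $o'$), the neighbours of $v$ at distance $2$ from $o$ are exactly those in fibres at quotient-distance $2$, which coincide with the neighbours at distance $2$ from $o'$. From $c_3=b_1$, together with $b_3=1$ and $a_3=k-c_3-b_3$, you get $a_3=a_1=\overline{\lambda}$ immediately; then $\operatorname{tr}L_1$ pins down $a_2$, and the contradiction follows as you wrote. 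This avoids the (implicit) two-eigenvalue computation needed to extract $c_2+b_2=\overline{\mu}$ from the standard-sequence identity. Either route works, but you may want to make the dependence on \emph{both} $\theta^{+}$ and $\theta^{-}$ explicit if you keep the eigenvector argument, since a single equation $(c_2+b_2)u_1(\theta^{+})=(\theta^{+}-a_2)u_2(\theta^{+})$ does not by itself determine $c_2+b_2$.
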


	Recall that circulants are Cayley graphs on cyclic groups. In \cite{MP03}, Miklavi\v{c} and Poto\v{c}nik determined all distance-regular circulants.
	
	\begin{lemma}(\cite[Theorem 1.2, Corollary 3.7]{MP03}) \label{lem::cir_DRG}
		Let $\Gamma$ be a circulant on $n$ vertices. Then $\Gamma$ is distance-regular if and only if it is isomorphic to one of the following graphs:
		\begin{enumerate}[$(i)$]\setlength{\itemsep}{0pt}
			\item the cycle $C_n$;
			\item the complete graph $K_n$;
			\item the complete multipartite graph $K_{t\times m}$, where $tm=n$;
			\item the complete bipartite graph without a perfect matching $K_{m,m}-mK_2$, where $2m = n$, $m$ odd;
			\item the Paley graph $P(n)$, where $n\equiv 1\pmod 4$  is prime.
		\end{enumerate}
		In particular, $\Gamma$ is a primitive distance-regular graph if and only if $\Gamma\cong K_n$, or $n$ is prime, and $\Gamma\cong C_n$ or $P(n)$.
	\end{lemma}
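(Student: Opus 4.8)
The plan is to argue by induction on $n$, using the structure theory of imprimitive distance-regular graphs to peel an imprimitive $\Gamma$ down to smaller circulants, and to treat the primitive case through the theory of Schur rings over cyclic groups. The ``if'' direction is a direct verification that each listed graph is a distance-regular circulant, so assume $\Gamma=\mathrm{Cay}(\mathbb{Z}_n,S)$ is distance-regular with valency $k=|S|$; then $\Gamma$ is connected. If $k\le2$, then $\Gamma$, being connected and regular, is a cycle $C_n$ (or $K_2$), and we are done; so assume $k\ge3$. If the diameter of $\Gamma$ is $1$, then $\Gamma\cong K_n$; if it is $2$, then $\Gamma$ is a strongly regular circulant, so by the classification of strongly regular circulants (Bridges--Mena \cite{BM79}, Ma \cite{M84}) $\Gamma$ is a complete multipartite graph $K_{t\times m}$ or a Paley graph $P(n)$ with $n\equiv1\pmod4$ prime. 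From now on suppose the diameter $d$ of $\Gamma$ satisfies $d\ge3$.

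The observation that powers the induction is that $\mathbb{Z}_n\le\mathrm{Aut}(\Gamma)$ acts regularly and is abelian, so every $\mathrm{Aut}(\Gamma)$-invariant partition of $V(\Gamma)$ consists of cosets of a subgroup of $\mathbb{Z}_n$: the block through $0$ is closed under addition, hence a subgroup. Now if $\Gamma$ is imprimitive, then by \cite[Theorem~4.2.1]{BCN89} it is bipartite and/or antipodal, so it carries a nontrivial invariant partition --- the bipartition or the set of antipodal classes --- which is therefore a partition into cosets of a subgroup $H\le\mathbb{Z}_n$. Consequently the halved graph $\frac{1}{2}\Gamma$ (in the bipartite case) and the antipodal quotient $\overline{\Gamma}$ (in the antipodal case) are again circulants, and by Lemma~\ref{lem::imprimitive} each is distance-regular of diameter $\lfloor d/2\rfloor$; by the induction hypothesis it appears in the list. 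It then remains to reconstruct $\Gamma$ from this smaller graph, and here the fact that $\Gamma$ is itself a circulant of valency $\ge3$ is essential: it forces $d\le3$ (for $d\ge4$ the only halved graphs or quotients that could occur force valency $2$, or force diameter $\le3$, or are complete graphs excluded by Lemma~\ref{lem::imprimitive}(iii), or are Paley graphs excluded by Lemma~\ref{lem::confer} since $k\ge3$), and for $d\le3$ one reads off $\Gamma\cong K_{m,m}$ ($d=2$) or $\Gamma\cong K_{m,m}-mK_2$ with $m$ odd ($d=3$), with Lemma~\ref{lem::antipodal_DRG} pinning down the parameters in the antipodal diameter-$3$ situation.

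It remains to rule out a primitive distance-regular circulant of diameter $d\ge3$. The Bose--Mesner algebra of a distance-regular circulant $\Gamma=\mathrm{Cay}(\mathbb{Z}_n,S)$ is a Schur ring $\mathcal A$ over $\mathbb{Z}_n$ whose basic sets are the distance sets $S_0,\dots,S_d$, and $\Gamma$ is primitive precisely when $\mathcal A$ has no proper nontrivial subgroup closed under $\mathcal A$: if $H$ were such a subgroup, then $H\setminus\{0\}$ would be a union of some $S_i$ with $i\ge1$, so the distance graph $\Gamma_i=\mathrm{Cay}(\mathbb{Z}_n,S_i)$ would have all its edges inside cosets of $H$ and hence be disconnected, contradicting primitivity. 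But a primitive Schur ring over $\mathbb{Z}_n$ is either trivial --- forcing $d=1$, a contradiction --- or $n=p$ is prime and (by a classical theorem of Schur) $\mathcal A$ is a cyclotomic Schur ring, so that, after relabelling, $\Gamma=\mathrm{Cay}(\mathbb{Z}_p,U)$ for a subgroup $U$ of $\mathbb{Z}_p^{\ast}$ with $-1\in U$; a short computation with the cyclotomic numbers then shows that such a cyclotomic graph is distance-regular only if it is complete, a Paley graph, or a $p$-cycle, the last being excluded here by $k\ge3$, so again $d\le2$ --- a contradiction. This completes the induction, and the ``in particular'' statement follows by inspecting the list, noting that a cycle $C_n$ is primitive only when $n$ is prime (it is bipartite for even $n$, and $\mathrm{Cay}(\mathbb{Z}_n,\{j,n-j\})$ is disconnected whenever $j$ shares a proper divisor with $n$).

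The genuine obstacle is the primitive base case --- establishing that no primitive distance-regular circulant has diameter $\ge3$, equivalently that the only primitive Schur rings over a cyclic group yielding a metric ($P$-polynomial) association scheme are the trivial one (giving $K_p$) and, in prime order, the cyclotomic one of index $2$ or $(p-1)/2$ (giving $P(p)$ or $C_p$). The rest is an organized but essentially routine induction on $n$ driven by the bipartite/antipodal dichotomy, whose one indispensable algebraic ingredient is the coset structure of the $\mathrm{Aut}(\Gamma)$-invariant partitions of a Cayley graph on $\mathbb{Z}_n$.
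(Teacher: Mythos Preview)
This lemma is quoted from \cite{MP03} and the present paper gives no proof of it, so there is no ``paper's own proof'' to compare against. Your sketch is a plausible outline of how such a classification could be organised, and the two structural pillars you identify --- the Schur-ring/Burnside--Schur argument for the primitive case and the bipartite/antipodal reduction for the imprimitive case --- are indeed the right ingredients.

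That said, there is a genuine gap in your treatment of diameter $3$. You write that for $d\le 3$ ``one reads off $\Gamma\cong K_{m,m}$ ($d=2$) or $\Gamma\cong K_{m,m}-mK_2$ with $m$ odd ($d=3$)'', but this skips the hard part. When $d=3$ and $\Gamma$ is antipodal non-bipartite, the antipodal quotient is a complete graph $K_{k+1}$, which \emph{is} on your list; the induction therefore gives you no leverage, and you must still prove that no antipodal non-bipartite circulant cover of $K_{k+1}$ exists. (Compare Lemma~\ref{lem::key1} of the present paper, where the analogous exclusion over $\mathbb{Z}_{p^s}\oplus\mathbb{Z}_p$ takes several pages of Fourier analysis and difference-set results.) Likewise, when $d=3$ and $\Gamma$ is bipartite non-antipodal, the halved graph is $K_m$, so $\Gamma$ is the incidence graph of a symmetric $2$-design; you must rule out all such circulant incidence graphs other than $K_{m,m}-mK_2$ (which is in fact bipartite \emph{and} antipodal). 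Invoking Lemma~\ref{lem::antipodal_DRG} only gives the spectrum and does not by itself exclude these cases. In \cite{MP03} these diameter-$3$ exclusions are handled by direct Fourier-analytic arguments on $\mathbb{Z}_n$ rather than by the inductive route you propose, and your sketch would need a comparable argument here to be complete.
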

	
	Let $G$ be a group, and let $\mathbb{Z}G$ be the group algebra of $G$ over the ring of  integers $\mathbb{Z}$. For an integer $m$ and an element $a=\sum_{g\in G} a_g g\in \mathbb{Z}G$, we define
	$$a^{(m)}=\sum_{g\in G} a_g g^m\in \mathbb{Z}G.$$
	Also, for a subset $T\subseteq G$, we denote 
	$$
	\underline{T}=\sum_{g\in T}g\in \mathbb{Z}G.
	$$
	If there is a  partition $\{T_0,T_1,\ldots,T_r\}$  of $G$ satisfying
	\begin{enumerate}[(i)]
		\item $T_0=\{e\}$,
		\item for any $i\in \{1,\ldots,r\}$, there exists some $j\in \{1,\ldots,r\}$ such that $\underline{T_i}^{(-1)}=\underline{T_j}$,
		\item  for any $i,j\in \{1,\ldots,r\}$, there exist integers $p_{i,j}^k$ ($0\leq k\leq r$) such that $$\underline{T_i}\cdot \underline{T_j}=\sum_{k=0}^rp_{ij}^k \cdot \underline{T_k},$$
	\end{enumerate}
	then the $\mathbb{Z}$-module $\mathcal{S}$ spanned by $\underline{T_0},\underline{T_1},\ldots,\underline{T_r}$ is exactly a subalgebra of $\mathbb{Z}G$, and called a \textit{Schur ring} over $G$. In this situation, the  basis $\{\underline{T_0},\underline{T_1},\ldots,\underline{T_r}\}$ is called the \textit{simple basis} of the Schur ring $\mathcal{S}$. We say that the Schur ring  $\mathcal{S}$ \textit{primitive} if $\langle T_i\rangle=G$ for every $i\in \{1,\ldots,r\}$. In partitular, if $T_0=\{e\}$ and $T_1=G\setminus \{e\}$, then the Schur ring spanned  by $\underline{T_0}$ and $\underline{T_1}$ is called \textit{trivial}. Note that a trivial Schur ring is primitive. 
	
	\begin{lemma}(\cite[(3.5)]{PG})\label{lem::automorphism of distance module}
		Let $G$ denote an abelian group of order $n$, and let $m$ denote an integer coprime to $n$. Suppose that $\mathcal{S}\subseteq \mathbb{Z}H$ is a Schur ring over $G$ with simple basis $\mathcal{B}=\{\underline{T_0},\underline{T_1},\ldots,\underline{T_r}\}$. Then  $\mathcal{B}=\{\underline{T_0}^{(m)},\underline{T_1}^{(m)},\ldots,\underline{T_r}^{(m)}\}$. Moreover, the mapping $\underline{T}\rightarrow \underline{T}^{(m)}$ is an automorphism of the $\mathbb{Z}$-algebra $\mathcal{S}$. 
	\end{lemma}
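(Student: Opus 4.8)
The plan is to view $a\mapsto a^{(m)}$ as the $\mathbb Z$-linear extension of the group automorphism $g\mapsto g^m$ of $G$ and to prove that this linear map stabilises $\mathcal S$ while permuting its simple basis $\mathcal B$. First I would reduce to the case where $m=p$ is prime: since $g^m$ depends only on $m$ modulo $\operatorname{ord}(g)$ and $\operatorname{ord}(g)\mid n$, we may assume $m\geq 1$, and writing $m=p_1\cdots p_\ell$ as a product of primes, each $p_i$ is coprime to $n$ (because $\gcd(m,n)=1$) and $g\mapsto g^m$ is the composition of the maps $g\mapsto g^{p_i}$; so it suffices to handle $m=p$ prime with $p\nmid n$ and then compose. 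For such $p$ the map $\sigma_p\colon g\mapsto g^p$ is a bijection of $G$ --- here is where commutativity enters: $g^p=h^p$ gives $(gh^{-1})^p=e$, so the order of $gh^{-1}$ divides $\gcd(p,n)=1$ --- hence $a\mapsto a^{(p)}$ is a ring automorphism of $\mathbb Z G$, and it remains to show it carries $\mathcal S$ onto itself while permuting $\mathcal B$.

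The key step is the congruence $a^{(p)}\equiv a^{p}\pmod{p\,\mathbb Z G}$ for every $a=\sum_g a_g g\in\mathbb Z G$: reducing modulo $p$, in the commutative $\mathbb F_p$-algebra $\mathbb F_pG$ the Frobenius map $x\mapsto x^p$ is additive, so, using also Fermat's little theorem, $\bar a^{\,p}=\sum_g\bar a_g^{\,p}g^p=\sum_g\bar a_g g^p=\overline{a^{(p)}}$. I would apply this with $a=\underline{T_i}$. As $\mathcal S$ is a subalgebra, $\underline{T_i}^{\,p}\in\mathcal S$, so $\underline{T_i}^{\,p}=\sum_{j=0}^r c_j\underline{T_j}$ with $c_j\in\mathbb Z$, whereas $\underline{T_i}^{(p)}=\underline{\sigma_p(T_i)}$ is a $0$--$1$ element of $\mathbb Z G$ (using that $\sigma_p$ is injective). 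Comparing coefficients of a fixed $g\in T_j$ in $\sum_j c_j\underline{T_j}\equiv\underline{\sigma_p(T_i)}\pmod p$ shows that $c_j$ is congruent modulo $p$ to $1$ if $g\in\sigma_p(T_i)$ and to $0$ otherwise; since $p\geq 2$, the truth of ``$g\in\sigma_p(T_i)$'' is the same for all $g\in T_j$. Thus each block $T_j$ is either contained in $\sigma_p(T_i)$ or disjoint from it, so $\sigma_p(T_i)$ is a union of blocks of the partition $\{T_0,\dots,T_r\}$.

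Finally, $\{\sigma_p(T_0),\dots,\sigma_p(T_r)\}$ is again a partition of $G$ into $r+1$ parts, and by the previous paragraph $\{T_0,\dots,T_r\}$ refines it; a partition that refines another while having the same number of parts must equal it, so $\sigma_p$ simply permutes $\{T_0,\dots,T_r\}$. Hence $a\mapsto a^{(p)}$ permutes $\mathcal B$, so being $\mathbb Z$-linear it restricts to a bijection of $\mathcal S$; being moreover the restriction of a ring automorphism of $\mathbb Z G$ fixing $\underline{T_0}=e$, it is an automorphism of the $\mathbb Z$-algebra $\mathcal S$. Composing over the prime factors of a general $m$ coprime to $n$ then yields the lemma. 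I expect the coefficient comparison to be the delicate point: it is what converts the ring-theoretic Frobenius congruence into the combinatorial fact that $\sigma_p$ respects the partition $\{T_0,\dots,T_r\}$, after which counting the parts closes the argument immediately.
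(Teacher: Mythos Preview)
The paper does not supply a proof of this lemma; it is quoted from Wielandt with a bare citation. Your argument is the classical Schur--Wielandt proof and it is correct: the reduction to prime $m=p$, the Frobenius congruence $a^{(p)}\equiv a^{p}\pmod{p\,\mathbb ZG}$ in the commutative ring $\mathbb F_pG$, the coefficient comparison showing that each block $T_j$ is either contained in or disjoint from $\sigma_p(T_i)$, and the pigeonhole step forcing the two partitions of equal size to coincide---all go through as you describe. The only cosmetic point is that the statement (copied from the paper) has a typo, writing $\mathbb ZH$ where $\mathbb ZG$ is meant; your proof correctly works in $\mathbb ZG$.
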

	
	Let $\Gamma=\mathrm{Cay}(G,S)$ be a Cayley graph of diameter $d$. Denote by
	$$
	\mathcal{N}_i=\{g\in G\mid \partial_\Gamma(g,1)=i\}.
	$$
	The $\mathbb{Z}$-submodule of  $\mathbb{Z}G$ spanned by  $\underline{\mathcal{N}_0},\underline{\mathcal{N}_1},\ldots,\underline{\mathcal{N}_d}$ is called the \textit{distance module} of  $\Gamma$, and is denoted by \textit{$\mathcal{D}_\mathbb{Z}(G,S)$}. In \cite{MP03},  Miklavi\v{c} and Poto\v{c}nik  provided an algebraic characterization for  (primitive) distance-regular Cayley graphs in terms of the Schur ring and the distance module.
	
	\begin{lemma}(\cite[Proposition 3.6]{MP03}) \label{lem::Schur_DRG}
		Let $\Gamma=\mathrm{Cay}(G,S)$  denote a distance-regular Cayley graph and $\mathcal{D}=\mathcal{D}_\mathbb{Z}(G,S)$ its distance module. Then:
		\begin{enumerate}[$(i)$]\setlength{\itemsep}{0pt}
			\item $\mathcal{D}$ is a (primitive) Schur ring over $G$ if and only if $\Gamma$ is a (primitive) distance-regular graph;
			\item $\mathcal{D}$ is the trivial Schur ring  over $G$ if and only if $\Gamma$ is isomorphic to the complete graph.
		\end{enumerate}
	\end{lemma}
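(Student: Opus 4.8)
The plan is to unwind both sides against the metric structure of $\Gamma$ and identify the structure constants of the distance module with the intersection numbers of the graph. Write $d$ for the diameter (so $\Gamma$ is connected, as must be assumed for $\mathcal{D}$ to be defined) and $\mathcal{N}_i=\{g\in G:\partial_\Gamma(1,g)=i\}$. The fact I would use repeatedly is that left multiplication by any $g\in G$ is a distance-preserving automorphism of $\Gamma$, so $\partial_\Gamma(g,h)=\partial_\Gamma(1,g^{-1}h)$; consequently the $i$-th distance graph is again a Cayley graph, $\Gamma_i=\mathrm{Cay}(G,\mathcal{N}_i)$, and $h\in N_i(g)$ if and only if $g^{-1}h\in\mathcal{N}_i$. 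Two of the three Schur-ring axioms are then immediate: $\{\mathcal{N}_0,\dots,\mathcal{N}_d\}$ is a partition of $G$ with $\mathcal{N}_0=\{1\}$ (axiom (i)), and since $S$ is inverse closed $\Gamma$ is undirected, so $g\in\mathcal{N}_i\iff g^{-1}\in\mathcal{N}_i$, that is, $\underline{\mathcal{N}_i}^{(-1)}=\underline{\mathcal{N}_i}$ (axiom (ii), taking $j=i$). Hence the entire content of part (i) is the closure axiom (iii), and this is where distance-regularity enters.

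The next step is a direct computation of the products. For all $i,j$ one has $\underline{\mathcal{N}_i}\cdot\underline{\mathcal{N}_j}=\sum_{x\in G}c_x\,x$, where $c_x=|\{(g,h)\in\mathcal{N}_i\times\mathcal{N}_j:gh=x\}|=|\{g\in\mathcal{N}_i:g^{-1}x\in\mathcal{N}_j\}|=|N_i(1)\cap N_j(x)|$, the last equality by the distance identity above. For the forward implication, if $\Gamma$ is distance-regular then the generalized intersection numbers $p^k_{ij}:=|N_i(u)\cap N_j(v)|$ depend only on $k=\partial(u,v)$ --- this is the classical fact that the distance relations of a distance-regular graph form a ($P$-polynomial) association scheme \cite{BCN89} --- so $c_x=p^k_{ij}$ for every $x\in\mathcal{N}_k$, and therefore $\underline{\mathcal{N}_i}\cdot\underline{\mathcal{N}_j}=\sum_{k=0}^d p^k_{ij}\,\underline{\mathcal{N}_k}$, which is axiom (iii); thus $\mathcal{D}$ is a Schur ring. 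Conversely, if $\mathcal{D}$ is a Schur ring then (iii) forces $c_x=|N_i(1)\cap N_j(x)|$ to be a constant $p^k_{ij}$ as $x$ ranges over $\mathcal{N}_k$; translating by left multiplication gives $|N_i(g)\cap N_j(gx)|=|N_i(1)\cap N_j(x)|$, and since every ordered pair of vertices at distance $k$ has the form $(g,gx)$ with $x=g^{-1}v\in\mathcal{N}_k$, the number $|N_i(u)\cap N_j(v)|$ depends only on $\partial(u,v)$. Specializing $j=1$ and $i\in\{k-1,k,k+1\}$ recovers the constancy of $c_k,a_k,b_k$, so $\Gamma$ is distance-regular.

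For the ``primitive'' clause of (i), I would use that $\langle\mathcal{N}_i\rangle=G$ if and only if $\Gamma_i=\mathrm{Cay}(G,\mathcal{N}_i)$ is connected (the connectivity criterion noted in the Introduction); hence $\mathcal{D}$ is a primitive Schur ring exactly when all of $\Gamma_1,\dots,\Gamma_d$ are connected, which is the definition of $\Gamma$ being primitive. Part (ii) then follows immediately: $\mathcal{D}$ is the trivial Schur ring if and only if the partition $\{\mathcal{N}_i\}$ is $\{\{1\},G\setminus\{1\}\}$, equivalently $d=1$ and $S=G\setminus\{1\}$, equivalently $\Gamma\cong K_{|G|}$. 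I expect no genuine obstacle here: the only step that is not pure bookkeeping is invoking the classical equivalence between the $a_i,b_i,c_i$-definition of distance-regularity and the constancy of all the generalized intersection numbers $p^k_{ij}$, and it is precisely that equivalence which makes the algebra of $\mathcal{D}$ mirror the metric structure of $\Gamma$.
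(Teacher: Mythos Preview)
The paper does not actually prove this lemma: it is quoted verbatim from \cite[Proposition~3.6]{MP03} and used as a black box. So there is no ``paper's own proof'' to compare against.

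That said, your argument is correct and is essentially the standard one (and presumably what Miklavi\v{c}--Poto\v{c}nik do). The key identity $c_x=|N_i(1)\cap N_j(x)|$ in the expansion of $\underline{\mathcal{N}_i}\cdot\underline{\mathcal{N}_j}$ is exactly right, and from it the equivalence between axiom~(iii) of a Schur ring and the constancy of the generalized intersection numbers $p_{ij}^k$ is immediate. Your treatment of primitivity via the connectivity of $\Gamma_i=\mathrm{Cay}(G,\mathcal{N}_i)$ is also the intended route. One minor remark: as stated in the paper the hypothesis already assumes $\Gamma$ is distance-regular, so the ``only if'' direction of the unparenthesised part of~(i) is vacuous there; your proof in fact establishes the cleaner biconditional (for connected Cayley graphs, $\mathcal{D}$ is a Schur ring iff $\Gamma$ is distance-regular), which is the natural statement.
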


	\begin{lemma}(\cite[Kochendorfer's theorem]{KT})\label{lem::Schur_dq}
		Let $p$ be a prime, and let $a,b$ be positive integers with $a>b$. Then there is no non-trivial primitive Schur ring over the group $\mathbb{Z}_{p^{a}}\oplus\mathbb{Z}_{p^{b}}$.
	\end{lemma}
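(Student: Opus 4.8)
\medskip
\noindent\textbf{Proof idea.}
The plan is to argue by contradiction: assume $\mathcal{S}$ is a non-trivial primitive Schur ring over $G=\mathbb{Z}_{p^a}\oplus\mathbb{Z}_{p^b}$ with $a>b\geq 1$, with simple basis $\{\underline{T_0},\ldots,\underline{T_r}\}$, $T_0=\{0\}$, $r\geq 2$, and produce a subgroup $N$ with $\{0\}\neq N\neq G$ and $\underline{N}\in\mathcal{S}$; since any basic set contained in $N$ then generates a proper subgroup, this contradicts primitivity. Two standard facts will be used throughout: the \emph{Schur--Wielandt principle} (if $\sum_g a_g g\in\mathcal{S}$ then $\underline{\{g:a_g=c\}}\in\mathcal{S}$ for every $c$; in particular the support of any element of $\mathcal{S}$, and the subgroup it generates, is $\mathcal{S}$-invariant), and Lemma~\ref{lem::automorphism of distance module}, which tells us that the maps $\underline{T}\mapsto\underline{T}^{(m)}$ with $\gcd(m,p)=1$ permute the basic sets $\underline{T_1},\ldots,\underline{T_r}$.

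The easy \emph{thin} case comes first. If some $T_i$ ($i\geq 1$) is a singleton $\{t\}$, then $\underline{T_i}^{\,k}=\underline{\{kt\}}$ together with the structure-constant axiom shows inductively that $\{kt\}$ is a basic set (or equals $\{0\}$) for every $k$; hence $\langle t\rangle$ is a union of basic sets, $\underline{\langle t\rangle}\in\mathcal{S}$, and since $G$ is not cyclic while $t\neq 0$ this already yields the forbidden $N=\langle t\rangle$. So we may assume $|T_i|\geq 2$ for all $i\geq 1$.

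The substantial part rests on a \emph{multiplier (Frobenius) computation}. In the commutative ring $\mathbb{Z}G$ one has $(\sum_g a_g g)^{p^{j}}\equiv\sum_g a_g g^{p^{j}}\pmod{p\,\mathbb{Z}G}$ for every $j\geq 1$ (freshman's dream iterated, plus $a_g^{p^{j}}\equiv a_g\pmod p$); applying this to a basic set $T$ and using $\underline{T}^{\,p^{j}}\in\mathcal{S}$ gives $\underline{T}^{(p^{j})}\in\mathcal{S}+p\,\mathbb{Z}G$, so, writing $\underline{T}^{(p^{j})}=\sum_g c_g g$ with $c_g=|\{t\in T:p^{j}t=g\}|$, the residues $c_g\bmod p$ are constant on each basic set. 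Since $\langle T_i\rangle=G$ for $i\geq 1$, every such $T_i$ meets the complement of the proper subgroup $p^{j}G$, where $c_g=0$; hence $c_g\equiv 0\pmod p$ for all $g\neq 0$, and therefore
\[
|T|\equiv|T\cap G[p^{j}]|\pmod p
\]
for every basic set $T$ and every $j\in\{1,\ldots,b\}$, where $G[p^{j}]=\{g\in G:p^{j}g=0\}$ runs through the chain of characteristic subgroups $\{0\}<G[p]<\cdots<G[p^{b}]$. One then combines these congruences with the sizes of the basic sets and their behaviour under the automorphisms $x\mapsto mx$ ($\gcd(m,p)=1$) to force one of the characteristic subgroups $G[p^{j}]$ (or, dually, one of the cyclic subgroups $p^{j}G$) to be an $\mathcal{S}$-subgroup, contradicting primitivity. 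This is exactly where the hypothesis $a>b$ is indispensable: it makes $p^{b}G\cong\mathbb{Z}_{p^{a-b}}$ a non-trivial \emph{cyclic} characteristic subgroup (containing the order-$p$ subgroup $p^{a-1}G$), and without the asymmetry the statement is simply false — already the Paley graph $P(p^{2})$ yields a non-trivial primitive Schur ring over $\mathbb{Z}_p\oplus\mathbb{Z}_p$.

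The main obstacle, and the real content, is precisely this last step of turning the divisibility data into a genuine $\mathcal{S}$-subgroup; it is essentially Kochendörfer's theorem and belongs to the Burnside--Schur circle of ideas. The cleanest way I know to organize it is either through Wielandt's structure theory of Schur rings over $p$-groups (wedge/tensor decompositions relative to $\mathcal{S}$-sections), or through the equivalent permutation-group statement that $\mathbb{Z}_{p^a}\oplus\mathbb{Z}_{p^b}$ with $a>b$ is a B-group (every primitive permutation group of degree $p^{a+b}$ containing a regular copy of this group is $2$-transitive), whose proof in turn leans on Burnside's theorem on primitive groups of prime degree and Schur's theorem on cyclic B-groups. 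I would expect all of the difficulty to be concentrated there, the reductions above being routine.
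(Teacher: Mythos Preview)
The paper does not prove this lemma at all: it is quoted as Kochend\"orfer's theorem with a direct citation to the original 1937 paper, and is used as a black box. So there is no proof in the paper to compare your proposal against.

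As for your proposal on its own terms: the preliminary reductions (the Schur--Wielandt principle, the thin case, the Frobenius congruence $\underline{T}^{\,p^j}\equiv\underline{T}^{(p^j)}\pmod{p\,\mathbb{Z}G}$ leading to $|T|\equiv|T\cap G[p^j]|\pmod p$) are standard and correctly stated. But you yourself identify the gap: the step that converts these congruences into an actual $\mathcal{S}$-subgroup is, in your own words, ``essentially Kochend\"orfer's theorem,'' and you propose to handle it by invoking either Wielandt's structure theory or the equivalent B-group statement. That is circular --- the B-group statement for $\mathbb{Z}_{p^a}\oplus\mathbb{Z}_{p^b}$ with $a>b$ \emph{is} Kochend\"orfer's theorem in its permutation-group formulation, and the Schur-ring formulation you are asked to prove is exactly its translation. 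So what you have written is a correct and well-informed sketch of the landscape around the result, together with an honest acknowledgement that the substantive combinatorial/arithmetic argument is not supplied. As a proof it is incomplete; as a commentary on where the difficulty lies it is accurate.
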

	
	If $\Gamma$ is a primitive distance-regular Cayley graph over $\mathbb{Z}_{p^s}\oplus\mathbb{Z}_{p}$  with $s\geq2$, then its distance module would be a primitive Schur ring over $\mathbb{Z}_{p^s}\oplus\mathbb{Z}_{p}$ by Lemma \ref{lem::Schur_DRG} (i), and hence must be  the trivial Schur ring  by Lemma \ref{lem::Schur_dq}. Therefore, by Lemma \ref{lem::Schur_DRG} (ii), we obtain the following result.
	
	\begin{corollary}\label{cor::pri_DRG}
		Let $\Gamma$ be a primitive distance-regular Cayley graph over $\mathbb{Z}_{p^s}\oplus\mathbb{Z}_{p}$ with $s\geq2$. Then $\Gamma$ is isomorphic to the complete graph $K_{p^{s+1}}$.
	\end{corollary}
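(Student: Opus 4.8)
The plan is purely algebraic: I would feed the hypotheses through the Schur-ring dictionary of Lemma~\ref{lem::Schur_DRG} and then invoke Kochendorfer's theorem, rather than argue combinatorially at all. Concretely, let $\Gamma=\mathrm{Cay}(\mathbb{Z}_{p^s}\oplus\mathbb{Z}_p,S)$ be a primitive distance-regular Cayley graph with $s\ge 2$, and let $\mathcal{D}=\mathcal{D}_{\mathbb{Z}}(\mathbb{Z}_{p^s}\oplus\mathbb{Z}_p,S)$ denote its distance module. By Lemma~\ref{lem::Schur_DRG}(i), the primitivity of $\Gamma$ forces $\mathcal{D}$ to be a primitive Schur ring over $\mathbb{Z}_{p^s}\oplus\mathbb{Z}_p$.

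Next I would apply Lemma~\ref{lem::Schur_dq} with $a=s$ and $b=1$. Since $s\ge 2$ we have $a>b$, so the lemma says there is no non-trivial primitive Schur ring over $\mathbb{Z}_{p^s}\oplus\mathbb{Z}_p$; hence $\mathcal{D}$ must be the trivial Schur ring, namely the one with simple basis $\underline{\{0\}}$ and $\underline{(\mathbb{Z}_{p^s}\oplus\mathbb{Z}_p)\setminus\{0\}}$.

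To conclude, Lemma~\ref{lem::Schur_DRG}(ii) identifies a distance-regular Cayley graph whose distance module is the trivial Schur ring with the complete graph on its vertex set; as $|\mathbb{Z}_{p^s}\oplus\mathbb{Z}_p|=p^{s+1}$, this gives $\Gamma\cong K_{p^{s+1}}$. I do not expect any genuine obstacle here: the statement is a three-step chaining of the preceding lemmas, and the only point that needs (minimal) care is checking that the hypothesis $s\ge 2$ is precisely what places us in the range $a>b$ of Kochendorfer's theorem --- for $s=1$ the group is $\mathbb{Z}_p\oplus\mathbb{Z}_p$, where primitive non-trivial Schur rings do exist, and indeed the strongly regular graphs of part~(iii) of Theorem~\ref{thm::main} live there.
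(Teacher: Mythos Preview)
Your proposal is correct and matches the paper's own argument essentially line for line: the paper also applies Lemma~\ref{lem::Schur_DRG}(i) to get a primitive Schur ring, invokes Lemma~\ref{lem::Schur_dq} (Kochendorfer) with $a=s>b=1$ to force triviality, and then concludes via Lemma~\ref{lem::Schur_DRG}(ii). Your additional remark explaining why the hypothesis $s\ge 2$ is sharp is a nice touch not made explicit in the paper.
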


	Let $G$ be a transitive permutation group acting on a set $\Omega$. An \textit{imprimitivity system}  for $G$ is a partition $\mathcal{B}$ of $\Omega$ which is invariant under the action of $G$, i.e., for every block $B\in \mathcal{B}$ and for every $g\in G$, we have either $B^g=B$ or $B^g\cap B=\emptyset$.
	
	\begin{lemma}(\cite[Lemma 2.2]{MP07}) \label{lem::block}
		Let $\Gamma=\mathrm{Cay}(G,S)$ denote a Cayley graph with the group $G$ acting regularly on the vertex set of $\Gamma$ by left multiplication. Suppose there exists an imprimitivity system $\mathcal{B}$ for $G$. Then the block $B\in\mathcal{B}$ containing the identity  $1\in G$ is a subgroup in $G$. Moreover,
		\begin{enumerate}[$(i)$]\setlength{\itemsep}{0pt}
			\item if $B$ is normal in $G$, then $\Gamma_\mathcal{B}=\mathrm{Cay}(G/B,S/B)$, where $S/B=\{sB\mid s\in S\setminus B\}$;
			\item if there exists an abelian subgroup $A$ in $G$ such that $G = AB$, then $\Gamma_\mathcal{B}$ is isomorphic to a
			Cayley graph on the group $A/(A\cap B)$.
		\end{enumerate}
	\end{lemma}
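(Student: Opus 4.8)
The plan is to work entirely with the regular left-multiplication action of $G$ on itself and to exploit that an imprimitivity system of a transitive group is permuted transitively. I would begin with the claim that $B$ is a subgroup. Since $G$ is transitive on $V(\Gamma)=G$ and $\mathcal{B}$ is $G$-invariant, $G$ permutes the blocks transitively; in particular every $g\in G$ maps $B$ bijectively onto a block, so all blocks have the same size and $|B|=|G|/|\mathcal{B}|$. The setwise stabiliser $G_B=\{g\in G\mid gB=B\}$ has index $|\mathcal{B}|$ by orbit--stabiliser, hence $|G_B|=|B|$. As $1\in B$, every $g\in G_B$ satisfies $g=g\cdot 1\in gB=B$, so $G_B\subseteq B$, and comparing cardinalities gives $G_B=B$; thus $B$ is a subgroup. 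At the same time this shows $\mathcal{B}$ is exactly the set of left cosets $\{gB\mid g\in G\}$, since each $gB$ is the image of the block $B$ under $g$, and $G$ is transitive on $\mathcal{B}$.

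For part $(i)$, assume $B\trianglelefteq G$, so that $G/B$ is a group and $\mathcal{B}=G/B$ as a set. I would compute adjacency in $\Gamma_{\mathcal{B}}$ directly: two distinct cosets $g_1B$ and $g_2B$ are adjacent iff some $g_1b_1$ is adjacent to some $g_2b_2$ in $\Gamma$, i.e. $b_1^{-1}(g_1^{-1}g_2)b_2\in S$ for some $b_1,b_2\in B$. Writing $g=g_1^{-1}g_2$ and using $b_1^{-1}B=B$ together with $Bg=gB$, one checks that $b_1^{-1}gb_2\in S$ for some $b_1,b_2\in B$ if and only if $gB=sB$ for some $s\in S$; moreover, since $g_1B\neq g_2B$ forces $g\notin B$, any such $s$ automatically lies in $S\setminus B$. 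This is precisely adjacency in $\mathrm{Cay}(G/B,S/B)$, and one verifies separately that $S/B$ is inverse-closed and avoids the identity coset $B$, so it is a legitimate connection set. Hence $\Gamma_{\mathcal{B}}=\mathrm{Cay}(G/B,S/B)$.

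For part $(ii)$, $B$ need not be normal. I would restrict the $G$-action on $\mathcal{B}$ to $A$. Given any block $gB$, write $g=ab$ with $a\in A$ and $b\in B$; then $gB=abB=aB$, so already $A$ acts transitively on $\mathcal{B}$. The stabiliser of the block $B$ in $A$ is $A\cap G_B=A\cap B$, and since $A$ is abelian, $A\cap B\trianglelefteq A$ and it equals the kernel of the $A$-action on $\mathcal{B}$; therefore $\overline{A}:=A/(A\cap B)$ acts faithfully and transitively, hence (by order comparison) regularly on $\mathcal{B}$. Finally, left multiplication by elements of $G$ preserves $S$, so $G$ acts on $\Gamma$ by automorphisms; because $\mathcal{B}$ is $G$-invariant this descends to an action of $G$, and in particular of $\overline{A}$, by automorphisms of $\Gamma_{\mathcal{B}}$. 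A graph admitting a regular group of automorphisms is a Cayley graph on that group (Sabidussi's theorem), so $\Gamma_{\mathcal{B}}$ is isomorphic to a Cayley graph on $A/(A\cap B)$.

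The steps requiring the most care are the identification $G_B=B$ (which underlies everything, and uses the equality of block sizes together with $1\in B$) and, in part $(ii)$, the transitivity of $A$ on $\mathcal{B}$ --- equivalently, that $A$ meets every coset of $B$ --- which is exactly where the hypothesis $G=AB$ enters; the remaining verifications are routine coset bookkeeping and the standard regular-action criterion for Cayley graphs.
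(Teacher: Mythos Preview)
The paper does not supply its own proof of this lemma; it is quoted verbatim from \cite[Lemma 2.2]{MP07} and used as a black box. So there is no in-paper argument to compare against, and your proposal stands or falls on its own merits.

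Your argument is correct and follows the standard line one would expect for this result. The identification $G_B=B$ via orbit--stabiliser and the inclusion $G_B\subseteq B$ is clean, and from it the description $\mathcal{B}=\{gB:g\in G\}$ follows immediately. Part (i) is a direct computation: adjacency between $g_1B$ and $g_2B$ amounts to $Bg_1^{-1}g_2B\cap S\neq\emptyset$, and normality collapses the double coset to a single coset, giving exactly the Cayley-graph condition on $G/B$. Part (ii) is also fine: $G=AB$ gives transitivity of $A$ on the cosets, abelianness forces the point stabiliser $A\cap B$ to coincide with the kernel, and then Sabidussi applies. One small wording slip: you say ``left multiplication by elements of $G$ preserves $S$'', which is not literally true; what you mean (and use) is that left multiplication is a graph automorphism of $\mathrm{Cay}(G,S)$, since $(gx)^{-1}(gy)=x^{-1}y$. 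With that corrected, the proof is sound.
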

	
	By Lemmas \ref{lem::imprimitive} and \ref{lem::block}, we obtain the following corollary.
	
	\begin{corollary}\label{cor::DRG_dq} 
		Let $\Gamma$ denote a distance-regular Cayley graph over $\mathbb{Z}_{p^s}\oplus\mathbb{Z}_{p}$ with $s\geq 2$.
		If $\Gamma$ is antipodal, then its antipodal quotient $\overline{\Gamma}$ is a distance-regular circulant or a distance-regular Cayley graph over $\mathbb{Z}_{p^l}\oplus\mathbb{Z}_{p}$ with  $l\in\{1,2,\ldots,s-1\}$.
		
	\end{corollary}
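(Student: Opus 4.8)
The plan is to transport the antipodal structure of $\Gamma$ into the group $G=\mathbb Z_{p^s}\oplus\mathbb Z_p$ via its regular action, identify the antipodal quotient $\overline\Gamma$ as a Cayley graph on a quotient $G/B$, and then use the very restricted structure of $G$ to pin down $G/B$.

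First I would dispose of the trivial case $d=1$, where $\Gamma\cong K_{p^{s+1}}$ and $\overline\Gamma\cong K_1$ is a distance-regular circulant; so assume $d\ge 2$. Since $\Gamma$ is antipodal, the partition $\mathcal B^\ast$ of $V(\Gamma)=G$ into antipodal classes is invariant under $\mathrm{Aut}(\Gamma)$ — automorphisms preserve distances, hence the relation $u\mathcal R v\Leftrightarrow\partial(u,v)\in\{0,d\}$, hence its equivalence classes. As $G$ acts on $V(\Gamma)$ regularly (in particular transitively) by translations and translations are automorphisms, $\mathcal B^\ast$ is an imprimitivity system for $G$; by Lemma~\ref{lem::block} the block $B\in\mathcal B^\ast$ containing $0$ is a subgroup, which is normal since $G$ is abelian, and Lemma~\ref{lem::block}(i) yields $\overline\Gamma=\Gamma_{\mathcal B^\ast}=\mathrm{Cay}(G/B,S/B)$. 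Moreover $\Gamma$ has valency $k\ge 3$ — the only connected vertex-transitive graphs of smaller valency on $p^{s+1}\ge 27$ vertices are odd cycles, which are not antipodal — so $\Gamma$ is imprimitive and Lemma~\ref{lem::imprimitive}(ii) shows $\overline\Gamma$ is distance-regular. It remains only to determine the isomorphism type of $G/B$.

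Since $\Gamma$ has diameter $d\ge 2$, we have $k_d\ge 1$, so the antipodal class of $0$ has size $|B|=1+k_d\ge 2$; in particular $B\neq\{0\}$ and $|G/B|\le p^s$. Being a quotient of the $2$-generated group $G$, we may write $G/B\cong\mathbb Z_{p^a}\oplus\mathbb Z_{p^b}$ with $a\ge b\ge 0$. The key observation is that $pG=p\mathbb Z_{p^s}\oplus 0\cong\mathbb Z_{p^{s-1}}$ is cyclic, hence so is its homomorphic image $p(G/B)=(pG+B)/B$; but $p(G/B)\cong\mathbb Z_{p^{a-1}}\oplus\mathbb Z_{p^{b-1}}$ (reading $\mathbb Z_{p^0}$ as the trivial group), which is cyclic only when $b\le 1$. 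If $b=0$, then $G/B$ is cyclic and $\overline\Gamma$ is a distance-regular circulant; if $b=1$, then $G/B\cong\mathbb Z_{p^a}\oplus\mathbb Z_p$ with $a\ge 1$, and $p^{a+1}=|G/B|\le p^s$ forces $a\le s-1$, so $\overline\Gamma$ is a distance-regular Cayley graph over $\mathbb Z_{p^l}\oplus\mathbb Z_p$ with $l=a\in\{1,\dots,s-1\}$. This completes the proof. Each ingredient here is routine; the one point demanding care — and the real content of the statement — is the last step, where one must exploit the precise shape of $G=\mathbb Z_{p^s}\oplus\mathbb Z_p$ (namely that $pG$ is cyclic) to rule out quotients such as $\mathbb Z_{p^2}\oplus\mathbb Z_{p^2}$, together with the fact that the antipodal index $|B|\ge 2$ makes the quotient proper.
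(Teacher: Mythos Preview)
Your proof is correct and follows essentially the same route as the paper: the antipodal classes form an imprimitivity system for the regular action of $G$, Lemma~\ref{lem::block} identifies $\overline\Gamma$ as $\mathrm{Cay}(G/B,S/B)$, and Lemma~\ref{lem::imprimitive}(ii) gives distance-regularity. You are in fact more thorough than the paper, which simply asserts that $G/B$ is cyclic or isomorphic to $\mathbb Z_{p^l}\oplus\mathbb Z_p$; your observation that $pG\cong\mathbb Z_{p^{s-1}}$ is cyclic and hence so is $p(G/B)$, forcing $b\le 1$ in $G/B\cong\mathbb Z_{p^a}\oplus\mathbb Z_{p^b}$, supplies exactly the missing justification, and your explicit handling of $d=1$ and of the hypothesis $k\ge 3$ is likewise more careful.
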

	\begin{proof}
		Assume that $\Gamma$ is antipodal. Since $\mathbb{Z}_{p^s}\oplus\mathbb{Z}_{p}$ acts regularly on the vertex set of $\Gamma$ by left multiplication,  the antipodal classes of $\Gamma$ form an imprimitivity system $\mathcal{B}$ for $\mathbb{Z}_{p^s}\oplus\mathbb{Z}_{p}$. Let $B\in \mathcal{B}$ denote the antipodal class of $\Gamma$ containing the identity of $\mathbb{Z}_{p^s}\oplus\mathbb{Z}_{p}$.  By Lemma \ref{lem::block}, $B$ is a subgroup of $\mathbb{Z}_{p^s}\oplus\mathbb{Z}_{p}$. Clearly, $B$ is a normal subgroup, and it follows from  Lemma \ref{lem::block} (i) that $\overline{\Gamma}=\Gamma_\mathcal{B}=\mathrm{Cay}(G/B,S/B)$, which is a  circulant or a Cayley graph over $\mathbb{Z}_{p^l}\oplus\mathbb{Z}_{p}$ with $l\in\{1,2,\ldots,s-1\}$. Note that $\overline{\Gamma}$ is distance-regular by Lemma \ref{lem::imprimitive}. Thus the result follows.
	\end{proof}

	A \textit{transversal design} $TD(r,v)$ of order $v$ with line size $r$ ($r\leq v$) is a triple $(\mathcal{P},\mathcal{G},\mathcal{L})$ such that 
	\begin{enumerate}[(i)]
		\item $\mathcal{P}$ is a set of $rv$ elements (called \textit{points});
		\item $\mathcal{G}$ is a partition of $\mathcal{P}$ into $r$ classes, each of size $v$ (called \textit{groups}); 
		\item  $\mathcal{L}$ is a collection of $r$-subsets of $\mathcal{P}$ (called \textit{lines});
		\item two distinct points are contained in  a unique line if and only if they are in distinct groups.
	\end{enumerate}
	It follows immediately that  $|G\cap L|=1$ for every $G\in \mathcal{G}$ and every $L\in \mathcal{L}$, and $|\mathcal{L}|=v^2$. The \textit{line graph} of a transversal design $TD(r,v)$ is the graph with lines as vertices and two of them being adjacent whenever there is a point incident to both lines. It is known that the line graph of a transversal design $TD(r,v)$ is a strongly regular  graph with parameters $(v^2,r(v-1),v+r^2-3r,r^2-r)$ (cf. \cite[p. 122]{Jur95}). 
	For $r=2$ we get the lattice graph $K_v\times K_v$, which is the Cartesian product of two copies of $K_v$. For $r=v$ we get the complete multipartite graph $K_{v\times v}$.
	
	\begin{example}\label{examp::PCP}
		\rm Let $G$ be a group of order $v^2$. A \textit{partial congruence partition} of $G$ with degree $r$ (an $(v,r)$-PCP) is a set $\mathcal{H}$ of $r$ subgroups  of order $n$ in $G$ such that $H_1\cap H_2=\{1\}$ for every pair of distinct elements $H_1, H_2$ of $\mathcal{H}$ (cf. \cite[pp. 223--224]{M94}). For every $g\in G$ and every $H\in \mathcal{H}$, we define $\overline{g}=\{gH: H\in \mathcal{H}\}$ and $\overline{H}=\{gH: g\in G\}$. Let $\mathcal{P}=\{gH: g\in G, H\in\mathcal{H}\}$, $\mathcal{G}=\{\overline{H}: H\in \mathcal{H}\}$ and $\mathcal{L}=\{\overline{g}:g\in G\}$. Suppose that $2\leq r\leq v$. Then one can verify that $(\mathcal{P},\mathcal{G},\mathcal{L})$ is a transversal design $TD(r,v)$, and its line graph is isomorphic to  $\mathrm{Cay}(G,\cup_{H\in \mathcal{H}}H\setminus\{1\})$. 
	\end{example}

	\begin{lemma}(\cite[Proposition 6.1.4]{Jur98})\label{lem::antipodal cover of line graph}
		An antipodal cover of the line graph $G$ of a transversal design $TD(r,v)$, $r\leq v$, has diameter four when $r=2$ and diameter three otherwise. 
	\end{lemma}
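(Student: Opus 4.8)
The plan is to read off the diameter of an antipodal distance-regular cover $\widetilde{\Gamma}$ of $G:=L(TD(r,v))$ from the parameters of $G$ together with the structure theory of imprimitive distance-regular graphs, refining with the clique geometry of the design where needed.

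First I would record the three regimes for $G$ itself. From the parameter formula recalled above, $G$ is strongly regular of diameter $2$ (it is not complete, since $r(v-1)<v^{2}-1$ when $r\le v$); it is complete multipartite, namely $G\cong K_{v\times v}$, exactly when $\mu=k$, i.e. when $r=v$; it is bipartite only when $(r,v)=(2,2)$, in which case $G\cong C_{4}$; and for $2\le r\le v-1$ it is a primitive strongly regular graph. Now let $\widetilde{\Gamma}$ be an antipodal distance-regular cover of $G$, so that $\overline{\widetilde{\Gamma}}\cong G$, with covering index $q\ge 2$. Lemma~\ref{lem::imprimitive}(ii) gives $\lfloor d_{\widetilde{\Gamma}}/2\rfloor=d_{G}=2$, hence $d_{\widetilde{\Gamma}}\in\{4,5\}$; moreover, by parts (iii)--(iv) of the same lemma, if $d_{\widetilde{\Gamma}}$ is odd or $\widetilde{\Gamma}$ is non-bipartite then $G$ must be primitive, whereas if $\widetilde{\Gamma}$ is bipartite of diameter $4$ then $G$ must be complete bipartite.

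These constraints already settle two of the regimes. If $r=v\ge 3$, then $G\cong K_{v\times v}$ is antipodal but is neither a complete graph nor a complete bipartite graph, so by Lemma~\ref{lem::imprimitive}(iii) it cannot occur as an antipodal quotient, and no antipodal cover exists; this rigidity for large $r$ is exactly what separates the two clauses of the statement. If $(r,v)=(2,2)$, then $G\cong C_{4}$ is imprimitive, so by Lemma~\ref{lem::imprimitive}(iv) the cover $\widetilde{\Gamma}$ must be bipartite of even diameter, hence $d_{\widetilde{\Gamma}}=4$ (concretely $\widetilde{\Gamma}\cong C_{8}$), matching the case $r=2$. The essential case is $2\le r\le v-1$ with $G$ primitive: here $\widetilde{\Gamma}$ cannot be bipartite of diameter $4$, so one is reduced to working through the admissible intersection arrays of an antipodal $q$-fold cover of a primitive strongly regular graph for the two candidate diameters $4$ and $5$ -- using the spectrum of $G$, integrality and feasibility of the resulting multiplicities, and, when $G$ happens to be a conference graph (which occurs, e.g., for $r=2$, $v=3$, where $G\cong P(9)$), Lemma~\ref{lem::confer}. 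The decisive extra input is the geometry of the design: the $rv$ points give rise to a large family of maximal $v$-cliques in $G$, each line lying on exactly $r$ of them, and the interaction of this clique structure with the requirement that $\widetilde{\Gamma}$ be a distance-regular antipodal cover restricts the admissible intersection arrays far enough to pin down the diameter of $\widetilde{\Gamma}$ (or to exclude the cover altogether), distinguishing the lattice graph $r=2$ from the denser geometric graphs with $r\ge 3$.

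I expect the main obstacle to be precisely this last step: among the numerically feasible intersection arrays for antipodal covers of $K_{v}\times K_{v}$ and of the geometric strongly regular graphs $L(TD(r,v))$ with $3\le r\le v-1$, one must eliminate those that are geometrically impossible, and this has to be done uniformly in $q$, $r$ and $v$. Everything before it is bookkeeping with Lemma~\ref{lem::imprimitive}.
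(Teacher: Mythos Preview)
This lemma is not proved in the paper; it is quoted from Juri\v{s}i\'{c}~\cite{Jur98} and used as a black box in the proof of Theorem~\ref{thm::main}, so there is no in-paper argument to compare your outline against.

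On its own terms, your outline has a structural mismatch with the statement. Your opening move --- applying Lemma~\ref{lem::imprimitive}(ii) to get $\lfloor d_{\widetilde\Gamma}/2\rfloor=d_G=2$, hence $d_{\widetilde\Gamma}\in\{4,5\}$ --- already excludes the conclusion the lemma asserts for $r\ge 3$, namely $d_{\widetilde\Gamma}=3$. This is precisely how the paper exploits the lemma: the ``diameter three'' clause is meant to collide with $d\in\{4,5\}$ and thereby force $r=2$. So Juri\v{s}i\'{c}'s proposition must rest on a constraint specific to the parameters or clique geometry of $L(TD(r,v))$, independent of the quotient-diameter bound, that drives the diameter of any putative cover down to $3$ when $r\ge 3$. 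Your sketch never produces such a constraint; it stays inside $\{4,5\}$ and proposes to sift feasible arrays there, which would at best prove \emph{nonexistence} of covers for $r\ge 3$ --- a related but different assertion --- and even that programme you explicitly leave undone, flagging the decisive step as ``the main obstacle''. To recover the actual proposition you will need to consult Juri\v{s}i\'{c}'s argument directly, since it evidently uses input beyond Lemma~\ref{lem::imprimitive}.
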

	
	The \textit{Hamming graph} $H(n,q)$ has as vertex set the collection of all $n$-tuples with entries in a fixed set of size $q$, where two $n$-tuples are adjacent when they differ in only one coordinate. Note that $H(2,v)$ is just the lattice graph $K_v\times K_v$.
	
	\begin{lemma}(\cite[Proposition 5.1]{HG})\label{lem::anti_Ham}
		Let $n,q\geq2$. Then $H(n,q)$ has no distance-regular antipodal covers, except for $H(2,2)$.
	\end{lemma}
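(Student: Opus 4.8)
The plan is to argue by contradiction: suppose $\widetilde{\Gamma}$ is a distance-regular $r$-fold antipodal cover of $\Gamma=H(n,q)$ with $r\geq 2$ and $(n,q)\neq(2,2)$. Since $\Gamma$ has diameter $n\geq 2$, Lemma~\ref{lem::imprimitive}(ii) forces $\widetilde{\Gamma}$ to have diameter $D\in\{2n,2n+1\}$, so $D\geq 4$. First I would show that $\widetilde{\Gamma}$ is \emph{locally} $nK_{q-1}$, just as $\Gamma$ is. The key is that the quotient map $\pi\colon\widetilde{\Gamma}\to\Gamma$ is a local isomorphism once $D\geq 4$: it maps $N(\widetilde v)$ bijectively onto $N(\pi(\widetilde v))$ (for each base-neighbour of $\pi(\widetilde v)$ there is a perfect matching between fibres, and two neighbours of $\widetilde v$ cannot lie in a common fibre, being at distance $D\geq 3$), and it preserves non-edges inside $N(\widetilde v)$: if $\widetilde x,\widetilde y\in N(\widetilde v)$ had $\pi(\widetilde x)\sim\pi(\widetilde y)$ but $\widetilde x\not\sim\widetilde y$, then the fibre over $\pi(\widetilde y)$ would contain a mate $\widetilde y'\neq\widetilde y$ adjacent to $\widetilde x$, whence $D=\partial(\widetilde y,\widetilde y')\leq 3$, a contradiction. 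Hence every neighbourhood in $\widetilde{\Gamma}$ is $nK_{q-1}$, so each maximal clique of $\widetilde{\Gamma}$ has size exactly $q$, every vertex lies on exactly $n$ of them, two distinct maximal cliques meet in at most one point, and a vertex adjacent to but not on a maximal clique $C$ has a unique neighbour in $C$. Since $-n$ is an eigenvalue of $\Gamma$ and hence of its cover $\widetilde{\Gamma}$, we get $\theta_{\min}(\widetilde{\Gamma})\leq -n$, while the Hoffman bound applied to a clique of size $q$ gives $q-1\leq n(q-1)/|\theta_{\min}(\widetilde{\Gamma})|$, i.e.\ $\theta_{\min}(\widetilde{\Gamma})\geq -n$. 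Thus $\theta_{\min}(\widetilde{\Gamma})=-n$, and $\widetilde{\Gamma}$ is a geometric distance-regular graph with smallest eigenvalue $-n$ whose Delsarte cliques form a partial linear space with $n$ lines of size $q$ through each point.

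\textbf{Case $n=2$.} Then $\widetilde{\Gamma}$ has smallest eigenvalue $-2$ and diameter $D\geq 4$, and the line structure realizes $\widetilde{\Gamma}$ as the line graph $L(\Delta)$ of a connected $q$-regular graph $\Delta$ (whose vertices are the maximal cliques of $\widetilde{\Gamma}$ and whose edges are the vertices of $\widetilde{\Gamma}$). By the classification of distance-regular line graphs (equivalently, of distance-regular graphs with smallest eigenvalue $-2$), the only ones of diameter $\geq 4$ are the cycles $C_m$ with $m\geq 8$. So $\Delta\cong C_m$, which forces $q=2$ (reading off the valency $2=n(q-1)$), and then $\widetilde{\Gamma}\cong C_m$ is an antipodal cover of $H(2,2)\cong C_4$ only for $m=8$. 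This contradicts $(n,q)\neq(2,2)$, and also reconfirms that the sole exception $H(2,2)$ really is covered, namely by $C_8$.

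\textbf{Case $n\geq 3$.} Now $D\geq 6$, and $\widetilde{\Gamma}$ is a geometric distance-regular graph of diameter $D$ with smallest eigenvalue $-n$, which is moreover imprimitive because $r\geq 2$. Here I would invoke the structure theory of geometric distance-regular graphs with bounded smallest eigenvalue (Metsch-type diameter bounds together with results of Koolen, Bang and others): once the diameter is large relative to $-\theta_{\min}$ — which $D\geq 2n$ amply is — such a graph must be a Hamming graph, a Johnson graph, a Grassmann graph, a bilinear forms graph, a dual polar graph, or a member of a short explicit list of small exceptions. But a Hamming graph $H(d',q')$ and a Johnson graph $J(v',d')$ have diameter $=-\theta_{\min}=n\neq D$; the Grassmann, bilinear-forms and dual polar graphs are primitive, contradicting imprimitivity; and the finitely many exceptions are far too small to be antipodal covers of $H(n,q)$ of index $\geq 2$. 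Every possibility is excluded, so $H(n,q)$ has no distance-regular antipodal cover when $n\geq 3$, which completes the proof.

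\textbf{The main obstacle.} The hard part is Case $n\geq 3$: it leans on the classification of geometric distance-regular graphs, and one must check that neither an infinite family nor any sporadic exception survives the diameter/eigenvalue/imprimitivity screening. A self-contained alternative would instead grind through the feasibility conditions directly — the local isomorphism above pins $c_i,a_i,b_i$ of $\widetilde{\Gamma}$ equal to those of $H(n,q)$ for $i\leq n-1$ and determines the layer sizes $k_i$ in terms of $r$, and one then rules out the remaining parameters via integrality of eigenvalue multiplicities and the Krein conditions; this is where the genuine computation lies. For $n=2$ this direct route already succeeds: the intersection array is forced to be $\{2(q-1),q-1,1,1;1,1,q-1,2(q-1)\}$ with $r=2$, its two non-classical eigenvalues are the roots of $\theta^2-(q-2)\theta-2(q-1)=0$, and a brief multiplicity/integrality argument leaves only $q=2$, i.e.\ $\widetilde{\Gamma}\cong C_8$.
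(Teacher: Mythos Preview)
The paper does not give its own proof of this lemma; it is quoted from Van~Bon--Brouwer \cite[Proposition~5.1]{HG} and used as a black box. So there is no in-paper argument to compare against. Your set-up and the case $n=2$ are essentially sound: the local-isomorphism argument works once $D\geq 4$, an antipodal cover inherits the eigenvalues of its quotient so $-n$ is an eigenvalue of $\widetilde{\Gamma}$, the Delsarte clique bound then forces $\theta_{\min}(\widetilde{\Gamma})=-n$, and for $n=2$ the classification of distance-regular graphs with least eigenvalue $-2$ (cf.\ \cite{ADJ17,BCN89}) really does leave only even cycles once the diameter exceeds~$3$, which collapses to $q=2$ and $\widetilde{\Gamma}\cong C_8$.

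The genuine gap is Case $n\geq 3$. The ``structure theory of geometric distance-regular graphs with bounded smallest eigenvalue'' you invoke does not exist as a theorem in the form you need. What is known (Koolen--Bang and successors) is that for each fixed $m$ there are only finitely many \emph{non}-geometric distance-regular graphs with valency $\geq 3$, diameter $\geq 3$ and $\theta_{\min}\geq -m$, and the geometric ones with $\theta_{\min}=-3$ have been classified; but there is no general theorem saying that a geometric distance-regular graph with diameter large relative to $|\theta_{\min}|$ must be Hamming, Johnson, Grassmann, bilinear-forms, dual polar, or sporadic. Hence the sentence ``once the diameter is large relative to $-\theta_{\min}$ \ldots\ such a graph must be \ldots'' is unsupported, and your subsequent elimination of those families, while correct as far as it goes, does not close the argument. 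Van~Bon--Brouwer's actual proof is precisely your ``self-contained alternative'': one writes down the forced intersection array of a putative cover (using that $c_i,b_i$ agree with those of $H(n,q)$ for $i<n$, together with the antipodal symmetry $c_i(\widetilde{\Gamma})=b_{D-i}(\widetilde{\Gamma})$), computes the eigenvalues and multiplicities, and eliminates all parameter sets by integrality and related feasibility constraints. That computation is the real content of the result, and your proposal stops just short of carrying it out.
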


	Let $n$ be a positive integer, and let $\omega$ be a primitive $n$-th root of unity. Let  $\mathbb{F}=\mathbb{Q}(\omega)$ denote the $n$-th cyclotomic field over the rationals. For a subset $A\subseteq \mathbb{Z}_n$, let $\Delta_A:\mathbb{Z}_n\rightarrow \mathbb{F}$ be the \textit{characteristic function} of $A$, that is, $\Delta_A(z)=1$ if $z\in A$, and $\Delta_A(z)=0$ otherwise. In particular, if $A=\{a\}$, then we write $\Delta_a$ instead of $\Delta_{\{a\}}$.
	Let $\mathbb{F}^{\mathbb{Z}_n}$ be the $\mathbb{F}$-vector space consisting of all functions $f:\mathbb{Z}_n\rightarrow \mathbb{F}$ with the scalar multiplication and addition defined point-wise. Let  $(\mathbb{F}^{\mathbb{Z}_n},\cdot)$  denote the  $\mathbb{F}$-algebra obtained from $\mathbb{F}^{\mathbb{Z}_n}$ by defining the multiplication point-wise, and let $(\mathbb{F}^{\mathbb{Z}_n},\ast)$  denote the $\mathbb{F}$-algebra obtained from $\mathbb{F}^{\mathbb{Z}_n}$ by defining the multiplication as the \textit{convolution}  (see \cite{MP07}):
	\begin{equation}\label{equ::fourier}
		(f\ast g)(z)=\sum_{i\in \mathbb{Z}_n}f(i)g(z-i),~~f,g\in \mathbb{F}^{\mathbb{Z}_n}.
	\end{equation}
	The \textit{Fourier transformation} $\mathcal{F}:(\mathbb{F}^{\mathbb{Z}_n},\ast)\rightarrow(\mathbb{F}^{\mathbb{Z}_n},\cdot)$ is defined by
	\begin{equation}\label{equ::fourier1}
		(\mathcal{F}f)(z)=\sum_{i\in \mathbb{Z}_n}f(i)\omega^{iz},~~f\in \mathbb{F}^{\mathbb{Z}_n}.
	\end{equation}
	It is easy to verify that $\mathcal{F}$ is an algebra isomorphism from  $(\mathbb{F}^{\mathbb{Z}_n},\ast)$ to $(\mathbb{F}^{\mathbb{Z}_n},\cdot)$, which obeys the inversion formula \begin{equation}\label{equ::inversion formula}
		\mathcal{F}(\mathcal{F}(f))(z)=nf(-z).
	\end{equation}
	
	Let $\mathbb{Z}_n^\ast=\{i\in \mathbb{Z}_n\mid \mathrm{gcd}(i,n)=1\}$ denote the multiplicative group of units in the ring  $\mathbb{Z}_n$. Then $\mathbb{Z}_n^\ast$ acts on $\mathbb{Z}_n$ by multiplication, and each orbit of this action consists of all elements of a given order in the additive group $\mathbb{Z}_n$. Consequently, each orbit is of  the form  $O_r=\{c\cdot \frac{n}{r}\in \mathbb{Z}_n\mid c\in \mathbb{Z}_n^\ast\}$, where $r$ is some positive divisor of $n$.
	
	The following three lemmas present  basic facts about Fourier transformation.
	
	\begin{lemma}(\cite[Corollary 3.2]{MP07})\label{lem::Fourier2}
		If $A$ is a subset of $\mathbb{Z}_{p^s}$ and $\mathrm{Im}(\mathcal{F}\Delta_A)\subseteq\mathbb{Q}$, then $A$ is a union of some orbits of the action of $\mathbb{Z}_n^\ast$ on $\mathbb{Z}_n$ by multiplication, and $\mathrm{Im}(\mathcal{F}\Delta_A)\subseteq\mathbb{Z}$.
	\end{lemma}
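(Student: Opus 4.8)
The plan is to exploit the compatibility between the Fourier transformation and the natural Galois action on $\mathbb{F}=\mathbb{Q}(\omega)$. Writing $n=p^{s}$, I identify $\mathrm{Gal}(\mathbb{F}/\mathbb{Q})$ with $\mathbb{Z}_n^\ast$ via $\sigma_c\colon\omega\mapsto\omega^{c}$. Since $(\mathcal{F}\Delta_A)(z)=\sum_{i\in A}\omega^{iz}$ by \eqref{equ::fourier1}, applying $\sigma_c$ yields $\sigma_c\big((\mathcal{F}\Delta_A)(z)\big)=\sum_{i\in A}\omega^{ciz}=(\mathcal{F}\Delta_A)(cz)$. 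Hence, if $\mathrm{Im}(\mathcal{F}\Delta_A)\subseteq\mathbb{Q}$, every value of $\mathcal{F}\Delta_A$ is fixed by all of $\mathrm{Gal}(\mathbb{F}/\mathbb{Q})$, so $(\mathcal{F}\Delta_A)(cz)=(\mathcal{F}\Delta_A)(z)$ for every $c\in\mathbb{Z}_n^\ast$ and every $z\in\mathbb{Z}_n$; that is, $\mathcal{F}\Delta_A$ is constant on the $\mathbb{Z}_n^\ast$-orbits of $\mathbb{Z}_n$.

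Next I would transfer this constancy from $\mathcal{F}\Delta_A$ back to $\Delta_A$ itself. A one-line substitution $i\mapsto c^{-1}i$ in \eqref{equ::fourier1} shows that whenever a function $f\in\mathbb{F}^{\mathbb{Z}_n}$ is constant on $\mathbb{Z}_n^\ast$-orbits, so is $\mathcal{F}f$. Applying this to $f=\mathcal{F}\Delta_A$, the function $\mathcal{F}(\mathcal{F}\Delta_A)$ is constant on $\mathbb{Z}_n^\ast$-orbits; but by the inversion formula \eqref{equ::inversion formula} this function equals $z\mapsto n\,\Delta_A(-z)$, so $z\mapsto\Delta_A(-z)$ is constant on orbits, and since $-1\in\mathbb{Z}_n^\ast$ (so multiplication by $-1$ permutes the orbits), $\Delta_A$ itself is constant on orbits. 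Consequently $A$ is a union of $\mathbb{Z}_n^\ast$-orbits of $\mathbb{Z}_n$, i.e.\ a union of some of the sets $O_r$ with $r\mid n$.

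Finally, for the integrality claim, each value $(\mathcal{F}\Delta_A)(z)=\sum_{i\in A}\omega^{iz}$ is a sum of roots of unity, hence an algebraic integer; being also rational by hypothesis, it must lie in $\mathbb{Z}$. (Alternatively, once $A$ is known to be a union of orbits, $\mathcal{F}\Delta_A$ is a sum of Ramanujan sums and is integer-valued for that reason.) I do not expect a serious obstacle; the only point needing a little care is the passage through the inversion formula — verifying that ``constant on orbits'' is genuinely preserved by $\mathcal{F}$ and that the sign change $z\mapsto-z$ is harmless because $-1$ is a unit in $\mathbb{Z}_n$.
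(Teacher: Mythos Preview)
The paper does not supply its own proof of this lemma; it is quoted verbatim as \cite[Corollary 3.2]{MP07} and used as a black box. Your argument is correct and is the standard Galois-theoretic proof: identifying $\mathrm{Gal}(\mathbb{Q}(\omega)/\mathbb{Q})$ with $\mathbb{Z}_n^\ast$, observing that the Galois action on Fourier values matches the multiplicative action on the argument, pulling orbit-constancy back through the inversion formula, and concluding integrality from the fact that a rational algebraic integer lies in $\mathbb{Z}$. The step you flag as delicate is harmless exactly for the reason you give: since $-1\in\mathbb{Z}_n^\ast$, each $\mathbb{Z}_n^\ast$-orbit is closed under negation, so $\Delta_A(-z)$ being orbit-constant is equivalent to $\Delta_A(z)$ being orbit-constant.
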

	

	Let $H$ be a subgroup of $G$. A \textit{transversal} of  $H$  is a subset of $G$ that contains exactly one element from each of the right cosets of $H$ in $G$.

	\begin{lemma}(\cite[Lemma 3.4]{MP07})\label{lem::Fourier1}
		Let $r$ be a positive divisor of $n$, and let $A$ be a transversal of the subgroup $r\mathbb{Z}_n$ in $\mathbb{Z}_n$. If $z=m\frac{n}{r}$ ($m\not\in r\mathbb{Z}_n$) is an arbitrary element of $\frac{n}{r}\mathbb{Z}_n\setminus\{0\}$, then $\mathcal{F}\Delta_A(z)=0$.
	\end{lemma}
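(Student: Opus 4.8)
The plan is to compute $\mathcal{F}\Delta_A(z)$ straight from the definition \eqref{equ::fourier1} and recognize it as a complete geometric sum of roots of unity. First I would write
\[
\mathcal{F}\Delta_A(z)=\sum_{i\in\mathbb{Z}_n}\Delta_A(i)\,\omega^{iz}=\sum_{a\in A}\omega^{az},
\]
so that the whole question reduces to understanding the exponents $az\bmod n$ as $a$ ranges over the transversal $A$.

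Since $z=m\frac{n}{r}$, I would set $\zeta=\omega^{n/r}$; as $\omega$ has order $n$, the element $\zeta$ is a primitive $r$-th root of unity, and $\omega^{az}=\omega^{am\,n/r}=\zeta^{am}$. The key structural point is that $\zeta^{am}$ depends only on the residue of $a$ modulo $r$ (because $\zeta$ has order $r$ and $r\mid n$), while the cosets of $r\mathbb{Z}_n$ in $\mathbb{Z}_n$ are exactly the $r$ residue classes modulo $r$. As $A$ is a transversal of $r\mathbb{Z}_n$, each such class is represented in $A$ exactly once, so
\[
\mathcal{F}\Delta_A(z)=\sum_{a\in A}\zeta^{am}=\sum_{j=0}^{r-1}\zeta^{jm}=\sum_{j=0}^{r-1}\bigl(\zeta^{m}\bigr)^{j}.
\]
To conclude, the hypothesis $m\notin r\mathbb{Z}_n$ gives $r\nmid m$, hence $\zeta^m\neq 1$, whereas $(\zeta^m)^r=(\zeta^r)^m=1$; therefore $\sum_{j=0}^{r-1}(\zeta^m)^j=\dfrac{(\zeta^m)^r-1}{\zeta^m-1}=0$, which is precisely the assertion.

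The argument is short, and there is no real obstacle; the only step needing a moment's care is the reindexing — checking that the transversal condition forces the exponents $am$ to run, modulo $r$, over a complete residue system, so that the sum genuinely becomes the full geometric series over all $r$-th roots of unity (and not some partial sum that need not vanish). Once that is secured, the vanishing follows at once from $\zeta^m$ being a nontrivial $r$-th root of unity.
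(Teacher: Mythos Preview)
Your proof is correct. The paper itself does not prove this lemma; it is quoted directly from \cite[Lemma 3.4]{MP07}, so there is no in-paper argument to compare against. Your computation is the standard one: because $r\mid n$, the cosets of $r\mathbb{Z}_n$ are precisely the residue classes modulo $r$, so the transversal hypothesis lets you reindex $\sum_{a\in A}\zeta^{am}$ as $\sum_{j=0}^{r-1}(\zeta^m)^j$, and the condition $m\notin r\mathbb{Z}_n$ ensures $\zeta^m\neq 1$ so that the geometric sum vanishes.
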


	\begin{lemma}(\cite[Lemma 4.3]{MP07})\label{lem::Fourier3}
		Let $p$ be a  prime divisor of $n$, and let $A$ be a transversal of the subgroup $\frac{n}{p}\mathbb{Z}_n$ in $\mathbb{Z}_n$. If $A$ is a union of some orbits of the action of $\mathbb{Z}_n^\ast$ on $\mathbb{Z}_n$ by multiplication, then $p = 2$ or $A = p\mathbb{Z}_n$.
	\end{lemma}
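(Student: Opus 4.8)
The plan is to pin down which $\mathbb{Z}_n^\ast$-orbits $O_r$ can be contained in a transversal of the order-$p$ subgroup $H:=\frac{n}{p}\mathbb{Z}_n$, and then to recover $A$ by a cardinality count. There is nothing to prove when $p=2$, so assume $p$ is odd. Since $|H|=p$, a transversal $A$ of $H$ has $|A|=n/p$ and contains exactly one element of each coset of $H$; hence, if an orbit $O_r$ lies in $A$, then $O_r$ meets each coset of $H$ in at most one point. So the first step is to decide, for each divisor $r$ of $n$, whether $O_r$ has this property.

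For this I would parametrise $O_r=\{c\cdot\tfrac{n}{r}:c\in\mathbb{Z}_n^\ast\}$ faithfully by the residues $\bar c\in\mathbb{Z}_r^\ast$ (so $|O_r|=\varphi(r)$), and observe that $c\tfrac{n}{r}$ and $c'\tfrac{n}{r}$ lie in the same coset of $H$ precisely when $\tfrac{n}{p}\mid(c-c')\tfrac{n}{r}$, i.e.\ when $r\mid p(c-c')$. If $p\nmid r$, this says $r\mid c-c'$, i.e.\ $c\tfrac{n}{r}=c'\tfrac{n}{r}$, so $O_r$ meets each coset at most once. If $p\mid r$, write $r=p^{b}s$ with $\gcd(p,s)=1$ and $b\ge1$; then $r\mid p(c-c')$ becomes $\tfrac{r}{p}\mid c-c'$, and I would check that the residue class of $1$ modulo $r/p$ contains at least two residues prime to $r$ — its $p$ members $1+k\tfrac{r}{p}$ ($0\le k<p$) are all $\equiv1\pmod s$, hence prime to $s$, and are prime to $p$ for every $k$ (if $b\ge2$) or for exactly $p-1$ values of $k$ (if $b=1$), a count that is at least $2$ because $p$ is odd. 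Thus, for $p\mid r$, the orbit $O_r$ has two distinct elements in a single coset of $H$ and cannot lie in a transversal. Conclusion: for $p$ odd, the orbits that may occur in $A$ are exactly the $O_r$ with $p\nmid r$.

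It remains to assemble this. The union of all $O_r$ with $p\nmid r$ equals $\{x\in\mathbb{Z}_n:p\nmid\mathrm{ord}(x)\}$, which is the subgroup $p^{t}\mathbb{Z}_n$, where $p^{t}$ denotes the exact power of $p$ dividing $n$; since $A$ is a union of such orbits, $A\subseteq p^{t}\mathbb{Z}_n$, so $n/p=|A|\le|p^{t}\mathbb{Z}_n|=n/p^{t}$, which forces $t=1$, and then $A\subseteq p\mathbb{Z}_n$ together with $|A|=n/p=|p\mathbb{Z}_n|$ gives $A=p\mathbb{Z}_n$. The step I expect to be most delicate is the divisor count in the case $p\mid r$: it must accommodate $r$ having several prime divisors and being divisible by $p$ to any power, and it is precisely there that $p\neq2$ is used — for $p=2$ one has $O_2=H\setminus\{0\}$, a single point that sits harmlessly in transversals, which is why the statement carries the exceptional clause. (Alternatively, one could start from Lemma~\ref{lem::Fourier1}, which gives $\mathcal{F}\Delta_A(z)=0$ for every $z\in p\mathbb{Z}_n\setminus\{0\}$, and combine this with the easy facts that $\mathcal{F}\Delta_A$ is constant on $\mathbb{Z}_n^\ast$-orbits and integer-valued; inverting the Fourier transform then reduces to the same orbit bookkeeping.)
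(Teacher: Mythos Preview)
The paper does not supply its own proof of this lemma; it is quoted verbatim from \cite[Lemma~4.3]{MP07} and used as a black box, so there is no in-paper argument to compare against. Your proof is correct and self-contained. The core step---showing that for odd $p$ and $p\mid r$ the orbit $O_r$ already contains two distinct elements in the same coset of $H=\tfrac{n}{p}\mathbb{Z}_n$---is handled cleanly by your count of units among $1+k\tfrac{r}{p}$ ($0\le k<p$): the count is $p$ when $p^2\mid r$ and $p-1$ when $p\|r$, in either case at least $2$ since $p\ge 3$, which is precisely where the hypothesis $p\neq 2$ enters. The finish via $A\subseteq\bigcup_{p\nmid r}O_r=p^{t}\mathbb{Z}_n$ (with $p^{t}\|n$) and the cardinality squeeze $n/p=|A|\le n/p^{t}$ is correct and efficient. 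One cosmetic remark: you write that the admissible orbits are ``exactly'' the $O_r$ with $p\nmid r$; what you actually prove (and all you need) is the inclusion---those are the only orbits that \emph{can} occur in $A$.
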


	A graph $\Gamma$ is called \textit{integral} if all its eigenvalues are integers. Let
	$G$ be a finite group, and let $\mathcal{F}_G$ be the set of all subgroups of $G$. The \textit{Boolean algebra} 
	$\mathbb{B}(\mathcal{F}_G)$ is the set whose elements are obtained by  arbitrary finite intersections, unions,
	and complements of the elements in $\mathcal{F}_G$. The minimal non-empty elements of $\mathbb{B}(\mathcal{F}_G)$ are
	called \textit{atoms}. It is known that each element of $\mathbb{B}(\mathcal{F}_G)$ is the union of some atoms, and the atoms for $\mathbb{B}(\mathcal{F}_G)$
	are the sets $[g] = \{x\in G \mid \langle x\rangle= \langle g\rangle\}$, $g\in G$.
	
	\begin{lemma}(\cite{ICG})\label{lem::int_Cay}
		Let $G$ be an abelian group. Then $\mathrm{Cay}(G,S)$ is integral if and only if  $S\in \mathbb{B}(\mathcal{F}_G)$.
	\end{lemma}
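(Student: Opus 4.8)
The plan is to use the character theory of finite abelian groups together with Galois theory. Write $n=\exp(G)$, fix a primitive $n$-th root of unity $\omega$, and let $\widehat{G}$ be the character group of $G$. Since $G$ is abelian, the adjacency matrix of $\mathrm{Cay}(G,S)$ is simultaneously diagonalized by the characters, and its eigenvalues are exactly the character sums $\lambda_\chi=\sum_{s\in S}\chi(s)$ for $\chi\in\widehat{G}$. Each $\lambda_\chi$ is an algebraic integer lying in the cyclotomic field $\mathbb{Q}(\omega)$, so it is a rational integer precisely when it is rational, i.e.\ when it is fixed by the Galois group $\mathrm{Gal}(\mathbb{Q}(\omega)/\mathbb{Q})\cong\mathbb{Z}_n^{\ast}$. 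Hence $\mathrm{Cay}(G,S)$ is integral if and only if $\sigma_a(\lambda_\chi)=\lambda_\chi$ for every $a\in\mathbb{Z}_n^{\ast}$ and every $\chi\in\widehat{G}$, where $\sigma_a$ denotes the automorphism of $\mathbb{Q}(\omega)$ sending each root of unity $\zeta$ to $\zeta^a$.

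The next step is the key computation: since $\chi(s)$ is a root of unity and $\mathrm{ord}(s)\mid n$, we have $\sigma_a(\chi(s))=\chi(s)^a=\chi(as)$, where $as$ denotes the $a$-th power of $s$ (written additively); because $\gcd(a,n)=1$, multiplication by $a$ is an automorphism of $G$, so reindexing gives $\sigma_a(\lambda_\chi)=\sum_{s\in S}\chi(as)=\sum_{t\in aS}\chi(t)$, with $aS=\{as:s\in S\}$. Therefore integrality of $\mathrm{Cay}(G,S)$ is equivalent to $\sum_{t\in aS}\chi(t)=\sum_{s\in S}\chi(s)$ holding for all $a\in\mathbb{Z}_n^{\ast}$ and all $\chi\in\widehat{G}$. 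The characters of $G$ form a basis of the dual of the group algebra $\mathbb{C}G$ (equivalently, the character table of $G$ is invertible), so this system of identities for all $\chi$ forces the equality of group-algebra elements $\underline{aS}=\underline{S}$, and since these have $0/1$ coefficients it just means $aS=S$. Consequently $\mathrm{Cay}(G,S)$ is integral if and only if $S$ is invariant under multiplication by every unit of $\mathbb{Z}_n$, i.e.\ $S$ is a union of orbits of the natural action of $\mathbb{Z}_n^{\ast}$ on $G$.

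It remains to identify these orbits with the atoms of $\mathbb{B}(\mathcal{F}_G)$. For $g\in G$, the orbit $\{ag:a\in\mathbb{Z}_n^{\ast}\}$ consists precisely of the generators of the cyclic group $\langle g\rangle$: if $\langle x\rangle=\langle g\rangle$ then $x=bg$ with $\gcd(b,\mathrm{ord}(g))=1$, and by the Chinese remainder theorem $b$ lifts to some $a\in\mathbb{Z}_n^{\ast}$ with $a\equiv b\pmod{\mathrm{ord}(g)}$, whence $ag=x$; conversely $\gcd(a,n)=1$ implies $\gcd(a,\mathrm{ord}(g))=1$, so $\langle ag\rangle=\langle g\rangle$. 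Thus the $\mathbb{Z}_n^{\ast}$-orbits on $G$ are exactly the atoms $[g]=\{x\in G:\langle x\rangle=\langle g\rangle\}$ (note $[g]=\langle g\rangle\setminus\bigcup_{\langle h\rangle\subsetneq\langle g\rangle}\langle h\rangle\in\mathbb{B}(\mathcal{F}_G)$). Since $\mathbb{B}(\mathcal{F}_G)$ is closed under unions and, as recalled above, each of its elements is a union of atoms, a set is a union of atoms if and only if it belongs to $\mathbb{B}(\mathcal{F}_G)$. Chaining this with the previous equivalence yields: $\mathrm{Cay}(G,S)$ is integral $\iff$ $S\in\mathbb{B}(\mathcal{F}_G)$.

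The step I expect to require the most care is the middle one — passing rigorously from ``every $\lambda_\chi$ is a rational integer'' to the combinatorial condition $aS=S$. One must observe that $\sigma_a$ acts on each eigenvalue $\lambda_\chi$ not by permuting it with some $\lambda_{\chi'}$ but by replacing the summation set $S$ with $aS$ while keeping the character fixed, so that integrality really delivers the identity $\sum_{t\in aS}\chi(t)=\sum_{s\in S}\chi(s)$ for the \emph{same} $\chi$, and then invoke linear independence of characters over $\mathbb{C}$ to recover $aS=S$ as sets; the remaining bookkeeping with orbits, generators of cyclic subgroups, and the Boolean algebra is routine.
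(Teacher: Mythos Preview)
The paper does not supply its own proof of this lemma; it is quoted as a known result from \cite{ICG}. Your argument is correct and is essentially the standard proof: diagonalize the adjacency matrix by characters, use the Galois action $\sigma_a\colon\zeta\mapsto\zeta^a$ on $\mathbb{Q}(\omega)$ to translate integrality of each eigenvalue into the set identity $aS=S$ for all $a\in\mathbb{Z}_n^{\ast}$ (via linear independence of characters), and then identify the $\mathbb{Z}_n^{\ast}$-orbits on $G$ with the atoms $[g]=\{x\in G:\langle x\rangle=\langle g\rangle\}$ of $\mathbb{B}(\mathcal{F}_G)$. The lifting of a unit modulo $\mathrm{ord}(g)$ to a unit modulo $n$ and the passage from ``union of atoms'' to ``element of $\mathbb{B}(\mathcal{F}_G)$'' are both routine, so there is no gap.
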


	A \textit{$(v,k,\lambda)$-difference set} in a group $G$ of order $v$ is a $k$-subset $D$ of $G$ such that every $g\in G\setminus\{1\}$ has exactly $\lambda$ representations $g=d_1d_2^{-1}$ with $d_1,d_2\in D$. The positive integer $n:= k-\lambda$ is called the \textit{order} of the difference set $D$. If $k\notin\{|G|, |G|-1, 1, 0\}$, then $D$ is \textit{non-trivial}.
	
	\begin{lemma}(\cite[Theorem 1.3]{CDS})\label{lem::diff_set1}
		If there is a $(v,k,\lambda)$-set in a cyclic group such that  $n:=k-\lambda$ is a power of a prime $p>3$, then $v$ and $n$ are coprime.
	\end{lemma}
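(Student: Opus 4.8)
The plan is to argue by contradiction. Suppose $D$ is a $(v,k,\lambda)$-difference set in the cyclic group $\mathbb{Z}_v$ with $n=k-\lambda=p^a$ for a prime $p>3$ and $a\ge 1$ (since $n\notin\{0,1\}$, $D$ is non-trivial), and suppose $\gcd(v,n)>1$, so that $p\mid v$. First I would record the group-ring identity $\underline{D}\,\underline{D}^{(-1)}=p^a\cdot 1+\lambda\,\underline{\mathbb{Z}_v}$ in $\mathbb{Z}[\mathbb{Z}_v]$, which yields $|\chi(\underline{D})|^2=p^a$ for every non-trivial character $\chi$ of $\mathbb{Z}_v$ and also $k^2=\lambda v+p^a$. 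From $k^2=\lambda v+p^a$ and $p\mid v$ we get $p\mid k$, and then $p\mid\lambda=k-p^a$; hence $p$ divides each of $k,\lambda,v$.

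Next I would contract $D$ along the natural surjection $\phi\colon\mathbb{Z}_v\to\mathbb{Z}_p$. Writing $d_i=|D\cap\phi^{-1}(i)|\ge 0$ and $\delta=\sum_{i\in\mathbb{Z}_p}d_i\,i\in\mathbb{Z}[\mathbb{Z}_p]$, the image of the difference-set identity under $\phi$ is $\delta\,\delta^{(-1)}=p^a\cdot 1+\tfrac{\lambda v}{p}\,\underline{\mathbb{Z}_p}$. Comparing the coefficient of the identity (using $\sum_i d_i=k$ and $k^2=\lambda v+p^a$) gives $\sum_i d_i^2=\tfrac{\lambda v}{p}+p^a$, hence, since $p\mid k$,
\[
\sum_{i\in\mathbb{Z}_p}\Bigl(d_i-\tfrac kp\Bigr)^2=\sum_{i\in\mathbb{Z}_p}d_i^2-\tfrac{k^2}{p}=p^{a}-p^{a-1}=p^{a-1}(p-1),
\]
so the non-negative integers $d_i$ are forced to cluster very tightly around $k/p$.

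The arithmetic input comes from a character $\psi$ of order $p$: $\psi(\delta)=\sum_i d_i\zeta_p^{\,i}\in\mathbb{Z}[\zeta_p]$ with $|\psi(\delta)|^2=p^a$. In $\mathbb{Z}[\zeta_p]$ one has $(p)=(1-\zeta_p)^{p-1}$, with $(1-\zeta_p)$ a prime ideal fixed by complex conjugation, so comparing $(1-\zeta_p)$-adic valuations in $\psi(\delta)\overline{\psi(\delta)}=p^a$ forces $\psi(\delta)=u\,(1-\zeta_p)^{a(p-1)/2}$ for a unit $u$ (note $p-1$ is even). Reading this divisibility through the integral basis $1,\zeta_p,\dots,\zeta_p^{\,p-2}$ shows that the $d_i$ all lie in a single residue class modulo $p^{\lfloor a/2\rfloor}$. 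The case $a=1$ (where $p^{\lfloor a/2\rfloor}=1$ gives nothing) I would instead dispatch directly: there $v=\lambda+2p+\tfrac{p(p-1)}{\lambda}$, so $\lambda\mid p(p-1)$, and a short case split on whether $p\mid\lambda$ forces $p$ to divide a positive integer strictly smaller than $p$ — a contradiction. For $a\ge 2$, feeding the congruence on the $d_i$ back into the displayed identity together with $\sum_i d_i=k$ collapses $(d_0,\dots,d_{p-1})$ to a short explicit list (for $a=2$ and $p\ge 5$, essentially: $p-1$ of the $d_i$ equal and the remaining one differing by $\pm p$), each of which then has to be excluded.

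The main obstacle is exactly this last step for $a\ge 2$: ruling out the surviving configurations. This is where $p>3$ must be used, since for $p\in\{2,3\}$ the constraints really are satisfiable — e.g. Hadamard difference sets realize $n$ a power of $2$ with $2\mid v$ — so any correct argument has to exploit $p\ge 5$. I expect the decisive tool to be a structure/multiplier result for difference sets whose order $n$ is a prime power in a group with cyclic Sylow $p$-subgroup: concretely, if $p^2\mid v$, a character of order $p^2$ gives $p^{\lfloor a/2\rfloor}\mid\chi(\underline{D})$ for every character $\chi$ of $p$-power order, whence Ma's structure lemma writes $\underline{D}=p^{\lfloor a/2\rfloor}X+PY$ with $X,Y\in\mathbb{Z}[\mathbb{Z}_v]$ of non-negative coefficients and $P$ the subgroup of order $p$, which contradicts the $0$--$1$ property of $\underline{D}$ via a coefficient count; the case $p\,\|\,v$ would be handled by a self-conjugacy/field-descent refinement of the same valuation analysis, possibly together with the First (or Second) Multiplier Theorem. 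Deploying this machinery systematically is what the cited result \cite{CDS} carries out.
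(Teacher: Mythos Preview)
The paper does not prove this lemma at all: it is quoted verbatim as \cite[Theorem~1.3]{CDS} (Leung--Ma--Schmidt) and used as a black box, so there is no ``paper's own proof'' to compare your proposal against. What you have written is a reasonable outline of the circle of ideas behind that result --- the character equation $|\chi(\underline{D})|^2=p^a$, contraction to $\mathbb{Z}_p$, the $(1-\zeta_p)$-adic valuation forcing congruences among the intersection numbers $d_i$, and finally Ma's structure lemma / field-descent to kill the remaining configurations --- and you correctly flag that the hypothesis $p>3$ is essential because of Hadamard and certain $p=3$ examples.

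That said, your proposal is explicitly a sketch rather than a proof: the $a\ge 2$ endgame is left at ``I expect the decisive tool to be\ldots'' and ``Deploying this machinery systematically is what the cited result \cite{CDS} carries out.'' This is an honest assessment --- the full argument in \cite{CDS} is substantial and not something one reproduces in a lemma --- but it means the proposal is not a self-contained proof. One small slip: you propose a ``case split on whether $p\mid\lambda$'' in the $a=1$ analysis, but you had already deduced $p\mid\lambda$ two paragraphs earlier from $p\mid k$ and $\lambda=k-p^a$, so no split is needed there. In summary: since the paper simply imports the lemma, there is nothing for you to match; if you want a genuine proof here you would have to reproduce a nontrivial portion of \cite{CDS}, and the present paper makes no attempt to do so.
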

	
	\begin{lemma}(\cite[Result 8.1]{CDS})\label{lem::diff_set2}
		Let $G$ be an abelian group containing a $(v, k, \lambda)$-difference set with $n:=k-\lambda=3^r$, $3|v$, and assume that the Sylow $3$-subgroup of $G$ is cyclic. Then $n$ is a square, and one of the following holds:
		\begin{enumerate}[$(i)$]
			\setlength{\itemsep}{0pt}
			\item $v=(25n-9)/6$ and $3^{r-1}||\lambda$.
			\item $v=(49n-9)/12$ and $3||\lambda$.
			\item $v=(64n-9)/15$ and $3||\lambda$.
		\end{enumerate}
	\end{lemma}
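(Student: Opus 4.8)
\emph{Proof proposal.} The plan is to run the classical character-sum analysis of difference sets, localized at the prime $3$. Write $G$ for the abelian group, $v=|G|$, and encode the difference-set property as the group-ring identity $\underline{D}\cdot\underline{D}^{(-1)}=n\cdot 1_G+\lambda\,\underline{G}$ in $\mathbb{Z}G$, where $n=k-\lambda=3^r$. For each $t$ with $1\le t\le a$, where the Sylow $3$-subgroup of $G$ has order $3^a$, there is a character $\chi$ of $G$ of order exactly $3^t$ (since $3\mid v$ and this subgroup is cyclic, $G$ maps onto $\mathbb{Z}_{3^t}$), and applying it to the identity gives $\chi(\underline D)\,\overline{\chi(\underline D)}=n=3^r$ in $\mathbb{Z}[\zeta_{3^t}]$ together with the congruence $\chi(\underline D)\equiv k\pmod{(1-\zeta_{3^t})}$. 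Because $3$ is totally ramified in $\mathbb{Q}(\zeta_{3^t})$ with $(3)=(1-\zeta_{3^t})^{\varphi(3^t)}$ and complex conjugation fixes this prime, the ideal equation forces $\chi(\underline D)=\eta_t\,(1-\zeta_{3^t})^{\,r\cdot 3^{t-1}}$ for a unit $\eta_t\in\mathbb{Z}[\zeta_{3^t}]$. Projecting $D$ onto $G/\ker\chi\cong\mathbb{Z}_{3^t}$ then says: the vector of coset intersection numbers $(c_0,\dots,c_{3^t-1})$, $c_i=|D\cap(\text{coset }i)|$, is non-negative, sums to $k$, and has the prescribed character value $\sum_i c_i\zeta_{3^t}^{\,i}=\eta_t(1-\zeta_{3^t})^{r\cdot 3^{t-1}}$.

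First I would settle $t=1$. Using $(1-\zeta_3)^2=-3\zeta_3$, the constraint becomes $\chi(\underline D)=\pm 3^{m}\zeta_3^{\,j}$ when $r=2m$ and $\chi(\underline D)=\pm 3^{m}\zeta_3^{\,j}(1-\zeta_3)$ when $r=2m+1$. Comparing this with $c_0+c_1+c_2=k$ and the group-ring identity pushed into $\mathbb{Z}[\mathbb{Z}_3]$ yields $(c_0-c_1)^2+(c_1-c_2)^2+(c_2-c_0)^2=2\cdot 3^r$, and in both parities $(1-\zeta_3)\mid\chi(\underline D)$, hence $3\mid k$, hence $3\mid\lambda$ (as $r\ge 1$). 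Next I would iterate to $t\ge 2$: self-conjugacy of $3$ modulo $3^t$ (which holds trivially, since $3^t$ has no part coprime to $3$) makes $\eta_t(1-\zeta_{3^t})^{r\cdot 3^{t-1}}$ rigid, and the requirement that its coefficient vector over $\{1,\zeta_{3^t},\dots\}$ have only non-negative integer entries summing to $k$ — a Turyn-type argument on coset intersection numbers — is what rules out odd $r$ (so $n=3^r$ is a square) and pins the $3$-part of $D$ into very few cosets.

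With $r=2m$ and this control over the $3$-structure of $D$ in hand, the global identities $k-\lambda=3^r$ and $k(k-1)=\lambda(v-1)$ (equivalently $k^2=n+\lambda v$) combine with the coset constraints into a finite Diophantine problem in $v$, $\lambda$ and a bounded intersection number; solving it leaves exactly the three families, the numerators $25,49,64=5^2,7^2,8^2$ and the valuation conditions $3^{r-1}\|\lambda$ (case $(i)$) and $3\|\lambda$ (cases $(ii),(iii)$) dropping out of the same computation.

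The main obstacle is the core of the second step together with the third: tracking the exact $3$-adic valuations of $k$, $\lambda$ and all the coset intersection numbers through the characters of $3$-power order, and then showing the resulting Diophantine system closes off to exactly three solution families rather than an infinite list. When $v$ happens to be even, the weaker conclusion ``$n$ is a square'' also follows at once from the Bruck--Ryser--Chowla theorem applied to the symmetric $(v,k,\lambda)$-design carried by $D$, but the cyclotomic and self-conjugacy route is what is needed for odd $v$ and for extracting the explicit parameters.
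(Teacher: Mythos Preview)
The paper does not prove this lemma at all: it is quoted verbatim as Result~8.1 of Leung--Ma--Schmidt~\cite{CDS} and used as a black box in the proof of Lemma~\ref{lem::key1}. There is therefore no ``paper's own proof'' to compare your proposal against.

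That said, your outline is the correct framework and is essentially the strategy carried out in~\cite{CDS}: one applies characters of $3$-power order to the group-ring equation $\underline{D}\cdot\underline{D}^{(-1)}=n+\lambda\,\underline{G}$, exploits that $3$ is totally ramified in $\mathbb{Q}(\zeta_{3^t})$ and that complex conjugation fixes the unique prime above it, and deduces the shape of $\chi(\underline{D})$ up to a unit. The self-conjugacy (Turyn-type) argument on non-negative coset intersection numbers is exactly what forces $r$ to be even and constrains the intersection pattern. Where your sketch is honest but incomplete is precisely where you flag it: the passage from ``the coset intersection numbers are constrained'' to the explicit list of three parameter families requires a careful case analysis of the possible unit $\eta_t$ and the resulting Diophantine conditions, and this is the genuine technical content of the cited result. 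Your proposal correctly identifies the architecture but does not execute the combinatorial endgame; for the purposes of the present paper that is immaterial, since the lemma is simply cited.
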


	\section{Distance-regular Cayley graphs over $\mathbb{Z}_{p^s}\oplus\mathbb{Z}_{p}$}\label{section::3}
	
	The main goal of this section is to prove  Theorem \ref{thm::main}, which gives a classification of distance-regular Cayley graphs over $\mathbb{Z}_{p^s}\oplus\mathbb{Z}_{p}$. For  simplicity, we keep the following notation.
	{\flushleft\bf Notation.}
	Let $p$ be an odd prime and $s\geq 1$. For $A\subseteq \mathbb{Z}_{p^s}$ and $j\in \mathbb{Z}_{p^s}$, we denote  $j+A=\{j+i\mid i\in A\}$, $jA=\{j\cdot i\mid i\in A\}$, $-A=\{-i\mid i\in A\}$, and $(A,j)=\{(i,j)\mid i\in A\}$. 
	Suppose that $\Gamma=\Gamma(p^s\times p,R_{0},R_{1},\ldots,R_{p-1}):=\mathrm{Cay}(\mathbb{Z}_{p^s}\oplus\mathbb{Z}_{p},(R_{0},0)\cup (R_{1},1)\cup\cdots\cup (R_{p-1},p-1))$ is a distance-regular Cayley graph over $\mathbb{Z}_{p^s}\oplus\mathbb{Z}_{p}$, where  $R_{0},R_{1},\ldots,R_{p-1}$  are subsets of $\mathbb{Z}_{p^s}$ such that $0\not\in R_{0}$, $R_{0}=-R_{0}$, and $R_{i}=-R_{p-i}$ for all $i\in{\left\lbrace 1,2,\ldots,p-1\right\rbrace }$. Let $R=\cup_{i=0}^{p-1}R_i$. Denote by $k$, $\lambda$, $\mu$ and $d$ the valency, the number of common neighbors of two adjacent vertices, the number of common neighbors of two vertices at distance $2$, and the diameter of $\Gamma$, respectively. For  $j\in\{0,1,\ldots,d\}$, let $\mathcal{N}_j=N_j((0,0))$ denote the set of vertices  at distance $j$ from the identity vertex $(0,0)\in\mathbb{Z}_{p^s}\oplus\mathbb{Z}_{p}$ in $\Gamma$, and let $R_{i,j}=\{u\in \mathbb{Z}_{p^s}\mid (u,i)\in \mathcal{N}_j\}$. Clearly, $R_{0,0}=\{0\}$ and $R_{i,1}=R_{i}$.
	
	
	\begin{lemma}\label{lem::1}
		Let $p$ be an odd prime, and let $\Gamma=\mathrm{Cay}(\mathbb{Z}_{p}\oplus\mathbb{Z}_{p},S)$ be a Cayley graph over $\mathbb{Z}_{p}\oplus\mathbb{Z}_{p}$. Then $\Gamma$ is  distance-regular if and only if $S=\cup_{i=1}^rH_i\setminus\{(0,0)\}$, where $2\leq r\leq p+1$, and $H_i$ ($i=1,\ldots,r$) are  subgroups of order $p$ in $\mathbb{Z}_{p}\oplus\mathbb{Z}_{p}$. In this situation, $\Gamma$ is isomorphic to the line graph of a transversal design $TD(r,p)$ when $r\leq p$, and to a complete graph when $r=p+1$. In particular, $\Gamma$ is primitive if and only if $2\leq r\leq p-1$ or $r=p+1$, and $\Gamma$ is imprimitive if and only if $r=p$, in which case  $\Gamma$ is the complete multipartite graph $K_{p\times p}$.
	\end{lemma}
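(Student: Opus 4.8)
The plan is to prove the two directions separately; the ``only if'' part will be reduced to showing that every distance-regular Cayley graph over $\mathbb{Z}_p\oplus\mathbb{Z}_p$ is integral.

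For the ``if'' direction, recall that $\mathbb{Z}_p\oplus\mathbb{Z}_p$ has exactly $p+1$ subgroups $H_1,\dots,H_{p+1}$ of order $p$, any two of which meet only in $\{(0,0)\}$. If $r=p+1$ then $S=(\mathbb{Z}_p\oplus\mathbb{Z}_p)\setminus\{(0,0)\}$ and $\Gamma=K_{p^2}$. If $2\le r\le p$, then $\{H_1,\dots,H_r\}$ is a $(p,r)$-PCP, so by Example \ref{examp::PCP} the graph $\Gamma$ is the line graph of a transversal design $TD(r,p)$, hence strongly regular with parameters $(p^2,\,r(p-1),\,p+r^2-3r,\,r^2-r)$; when $r=p$ one has $\mu=k$, i.e.\ $\Gamma=K_{p\times p}$. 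All of these graphs are distance-regular.

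For the ``only if'' direction, let $\Gamma=\mathrm{Cay}(\mathbb{Z}_p\oplus\mathbb{Z}_p,S)$ be distance-regular of diameter $d$; connectedness forces $\langle S\rangle=\mathbb{Z}_p\oplus\mathbb{Z}_p$ and $k=|S|\ge 3$. The atoms of $\mathbb{B}(\mathcal{F}_{\mathbb{Z}_p\oplus\mathbb{Z}_p})$ are $\{(0,0)\}$ and the sets $H_i\setminus\{(0,0)\}$ ($1\le i\le p+1$), because for $g\ne(0,0)$ the group $\langle g\rangle$ is the unique subgroup of order $p$ through $g$. Hence by Lemma \ref{lem::int_Cay}, once $\Gamma$ is known to be integral we obtain $S=\bigcup_{i\in I}H_i\setminus\{(0,0)\}$ for some $I$ with $|I|=:r\ge 2$; a short character computation then gives that the nonprincipal eigenvalues of $\Gamma$ are $p-r$ (with multiplicity $r(p-1)$) and $-r$ (with multiplicity $(p+1-r)(p-1)$), so $\Gamma$ has at most three distinct eigenvalues and diameter $\le 2$, and comparison with the ``if'' direction identifies $\Gamma$ with $L(TD(r,p))$ for $2\le r\le p$ (the case $r=p$ being $K_{p\times p}$) or with $K_{p^2}$ for $r=p+1$, with $\Gamma$ primitive precisely when $p-r\ne 0$. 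So it suffices to prove $\Gamma$ is integral; and since a complete graph is integral and a strongly regular graph on $p^2$ vertices is integral (either it is not a conference graph and its eigenvalues are integers, or it is a conference graph with eigenvalues $\frac{-1\pm p}{2}\in\mathbb{Z}$), it is enough to prove $d\le 2$.

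Suppose $d\ge 3$. First I would rule out the imprimitive case: since $p^2$ is odd $\Gamma$ is not bipartite, so $\Gamma$ is antipodal, the antipodal classes form an imprimitivity system for the regular action of $\mathbb{Z}_p\oplus\mathbb{Z}_p$, and the identity block $B$ is (Lemma \ref{lem::block}) a subgroup, necessarily of order $p$; then $B$ is normal and $\overline\Gamma=\mathrm{Cay}((\mathbb{Z}_p\oplus\mathbb{Z}_p)/B,S/B)$ is a distance-regular circulant on $p$ vertices (Lemmas \ref{lem::imprimitive}, \ref{lem::block}), so by Lemma \ref{lem::cir_DRG} it is $C_p$, $K_p$, or $P(p)$. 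Each case is impossible: a $p$-fold antipodal cover of $C_p$ ($p\ge 5$) has valency $2<k$; $P(p)$ is a conference graph, which by Lemma \ref{lem::confer} has no distance-regular $p$-fold antipodal cover for $p\ge 5$; and if $\overline\Gamma=K_p$ then $\overline\Gamma$ has diameter $1$, forcing $d=3$ (Lemma \ref{lem::imprimitive}), and Lemma \ref{lem::antipodal_DRG} gives $p^2=p(k+1)$ and $k=\mu(p-1)+\lambda+1$, whence $k=p-1$ and $\mu(p-1)+\lambda=p-2$, contradicting $\mu\ge 1$. Therefore $\Gamma$ must be primitive, and its distance module $\mathcal{D}=\mathcal{D}_{\mathbb{Z}}(\mathbb{Z}_p\oplus\mathbb{Z}_p,S)$ is then a primitive Schur ring (Lemma \ref{lem::Schur_DRG}).

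The hard part is ruling out a \emph{primitive} distance-regular Cayley graph over $\mathbb{Z}_p\oplus\mathbb{Z}_p$ of diameter $\ge 3$; here I expect to need the Schur-ring and Fourier tools of Section \ref{section::2}. For each integer $m$ coprime to $p$, scalar multiplication $g\mapsto mg$ is an automorphism of $\mathbb{Z}_p\oplus\mathbb{Z}_p$ and induces (Lemma \ref{lem::automorphism of distance module}) an algebra automorphism of $\mathcal{D}$ permuting the simple basis $\{\underline{\mathcal{N}_0},\dots,\underline{\mathcal{N}_d}\}$; moreover $\bigcup_{m}mS$ is always a union of lines $H_i\setminus\{(0,0)\}$. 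If $mS=S$ for all such $m$, then $S$ itself is a union of lines and $d\le 2$ as above, a contradiction; otherwise $mS=\mathcal{N}_j$ for some $j\ge 2$. Using the monotonicity of the $k_i$ for primitive distance-regular graphs (see \cite[Proposition 5.1.1]{BCN89}, and Lemma \ref{lem::k1=k2}) together with primitivity, one forces $j=d$, so that the distance-$d$ graph $\Gamma_d\cong\Gamma$ and the distance scheme is $P$-polynomial with respect to two distinct relations; analysing the eigenvalues $\lambda_{a,b}=\sum_{i}\omega^{bi}\widehat{R_i}(a)$ via the Fourier transformation over $\mathbb{Z}_p$ in this constrained situation should yield the desired contradiction. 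Closing off this primitive diameter-$\ge 3$ case is the main obstacle; the remainder of the argument uses only the cited lemmas of Section \ref{section::2}.
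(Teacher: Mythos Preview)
Your ``if'' direction and the atom analysis are correct and match the paper. Your observation that every strongly regular graph on $p^2$ vertices is integral (because a conference graph on $p^2$ vertices has eigenvalues $\tfrac{-1\pm p}{2}\in\mathbb{Z}$) is in fact cleaner than the paper, which at that point invokes the classification results of \cite{LM05,LM95}.

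The genuine gap is your treatment of $d\ge 3$. You split into imprimitive and primitive, dispose of the imprimitive case, and then leave the primitive case essentially open (``closing off this primitive diameter-$\ge 3$ case is the main obstacle''). The paper avoids this detour entirely with a one-line trick you almost have but do not use: instead of taking \emph{any} $m$ with $mS\neq S$, take the \emph{smallest} such $m_0\ge 2$. Then $(m_0-1)S=S$, so for each $g\in S$ the element $m_0g=(m_0-1)g+g$ is adjacent to $(m_0-1)g\in S=\mathcal N_1$; since $m_0g\neq 0$ and $m_0S\cap S=\emptyset$ (they are distinct blocks of the Schur-ring partition), this forces $m_0S\subseteq\mathcal N_2$, hence $m_0S=\mathcal N_2$ and $k_1=k_2$. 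Lemma~\ref{lem::k1=k2} then gives $d\le 2$ or $\Gamma$ is a $2$-fold antipodal cover with $d=3$, and the latter is impossible on an odd number of vertices. No primitive/imprimitive split is needed.

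Your attempted route via ``$j=d$'' and a second $P$-polynomial ordering is not clearly salvageable: from $|mS|=k_1$ and monotonicity you do not get $j=d$ (for primitive graphs with $d\ge 3$ one has $k_1<k_2$, but there is no general inequality forcing $k_j>k_1$ for all $1<j<d$), and even granting $j=d$ the remaining Fourier analysis is unspecified. Replace that paragraph by the minimal-$m_0$ argument above and your proof is complete, with the bonus that your integrality-of-SRGs remark lets you drop the external reference the paper uses.
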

	
	\begin{proof}
		Assume that $\Gamma$ is a distance-regular graph. If $\Gamma$ is integral, by Lemma \ref{lem::int_Cay}, $S$ is a union of some atoms of $\mathbb{B}(\mathcal{F}_{\mathbb{Z}_{p}\oplus\mathbb{Z}_{p}})$. Note that the atoms of $\mathbb{B}(\mathcal{F}_{\mathbb{Z}_{p}\oplus\mathbb{Z}_{p}})$ are the sets $H\setminus \{(0,0)\}$ with $H$ being a subgroup of $\mathbb{Z}_{p}\oplus\mathbb{Z}_{p}$ of order $p$. Therefore, we conlude that $S=\cup_{i=1}^rH_i\setminus\{(0,0)\}$, where $2\leq r\leq p+1$, and $H_i$ ($i=1,\ldots,r$) are  subgroups of order $p$ in $\mathbb{Z}_{p}\oplus\mathbb{Z}_{p}$.
		
		Now suppose that  $\Gamma$ is not integral. By Lemmas \ref{lem::automorphism of distance module} and \ref{lem::Schur_DRG}, $\underline{\mathcal{N}_1}^{(m)}\in \{\underline{\mathcal{N}_1},\ldots,\underline{\mathcal{N}_d}\}$ for $m\in\{1,\ldots,p-1\}$. If $\underline{\mathcal{N}_1}^{(m)}=\underline{\mathcal{N}_1}$ for all $m \in \{1,\ldots,p-1\}$, then $S$ is a union of some atoms of $B(\mathcal{F}_{\mathbb{Z}_{p}\oplus\mathbb{Z}_{p}})$, and so $\Gamma$ is integral by Lemma \ref{lem::int_Cay}, a contradiction. Thus there exists some $m_0\in \{2,\ldots,p-1\}$ such that $\underline{\mathcal{N}_1}^{(m_0)}\neq \underline{\mathcal{N}_1}$, and $\underline{\mathcal{N}_1}^{(m)}=\underline{\mathcal{N}_1}$ for all $m \in \{1,\ldots,m_0-1\}$. Then  $\underline{\mathcal{N}_1}^{(m_0)}= \underline{\mathcal{N}_2}$,  and it follows that $k_1=k_2$. By Lemma \ref{lem::k1=k2}, we have either $d=2$, or $d=3$ and $\Gamma$ is a $2$-fold antipodal cover of the complete graph. Clearly, the later case cannot occur because $\Gamma$ has the odd order $p^2$. Therefore, $\Gamma$ must be a strongly regular graph. However, since $\Gamma$ is not integral, by \cite[Theorem 2.1, Theorem 2.2]{LM05} (see also \cite{LM95}), we see that the order of $\Gamma$ must be of the form $q^{2\ell+1}$ for some prime $q\equiv 1\pmod 4$, which is impossible.  
		
		Conversely, suppose that $S=\cup_{i=1}^rH_i\setminus\{(0,0)\}$, where $2\leq r\leq p+1$, and $H_i$ ($i=1,\ldots,r$) are  subgroups of order $p$ in $\mathbb{Z}_{p}\oplus\mathbb{Z}_{p}$. If $r=p+1$, then $S=\mathbb{Z}_{p}\oplus\mathbb{Z}_{p}\setminus{(0,0)}$, and $\Gamma$ is isomorphic to the complete graph $K_{v^2}$, as desired. If $r\leq p$, according the arguments in Example \ref{examp::PCP}, $\Gamma$ is isomorphic to the line graph of a transversal design $TD(r,p)$, and so is a strongly regular graph with parameters $(p^2,r(p-1),p+r^2-3r,r^2-r)$, as required. In particular, if $r=p$ then $\Gamma$ is the complete multipartite graph $K_{p\times p}$, which is imprimitive. If $2\leq r\leq p-1$, then $\Gamma$ is neither antipodal nor bipartite, and so must be  primitive.
	\end{proof}
	
	\begin{lemma}\label{lem::neighbor}
		Let  $\Gamma=\Gamma(p^s\times p,R_{0},R_{1},\ldots,R_{p-1})$ be a connected Cayley graph over $\mathbb{Z}_{p^s}\oplus\mathbb{Z}_{p}$. For every $(i,j)\in \mathbb{Z}_{p^s}\oplus \mathbb{Z}_p$, $$
		N((i,j))=(i+R_{0},j)\cup (i+R_1,j+1) \cup \cdots\cup (i+R_{p-1},j+p-1).
		$$
	\end{lemma}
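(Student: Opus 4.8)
The plan is to simply unwind the definition of adjacency in a Cayley graph together with the explicit description of the connection set. Recall that, with the notation fixed above, $\Gamma = \mathrm{Cay}(\mathbb{Z}_{p^s}\oplus\mathbb{Z}_{p},S)$ where $S = (R_{0},0)\cup(R_{1},1)\cup\cdots\cup(R_{p-1},p-1)$ and the group operation on $\mathbb{Z}_{p^s}\oplus\mathbb{Z}_{p}$ is written additively, so that the adjacency condition $g^{-1}h\in S$ becomes $h-g\in S$. Note that connectedness of $\Gamma$ plays no role in this particular statement; it is part of the hypothesis only because the whole section works with connected $\Gamma$.

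First I would fix $(i,j)\in\mathbb{Z}_{p^s}\oplus\mathbb{Z}_{p}$ and an arbitrary vertex $(a,b)$. By definition of the Cayley graph, $(a,b)\in N((i,j))$ if and only if $(a,b)-(i,j)=(a-i,\,b-j)\in S$. Since $S$ is the disjoint union of the sets $(R_{\ell},\ell)$ for $\ell\in\{0,1,\ldots,p-1\}$, this membership holds precisely when there is some $\ell\in\{0,1,\ldots,p-1\}$ with $b-j\equiv \ell\pmod p$ and $a-i\in R_{\ell}$, equivalently $a\in i+R_{\ell}$ and $b=j+\ell$; that is, $(a,b)\in(i+R_{\ell},\,j+\ell)$.

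Finally, I would take the union over $\ell\in\{0,1,\ldots,p-1\}$ to conclude
$$
N((i,j))=\bigcup_{\ell=0}^{p-1}(i+R_{\ell},\,j+\ell)=(i+R_{0},j)\cup(i+R_{1},j+1)\cup\cdots\cup(i+R_{p-1},j+p-1),
$$
as claimed. There is no real obstacle here; the argument is a routine verification. The only remark worth recording is that the $p$ pieces on the right-hand side are pairwise disjoint, because their second coordinates $j,j+1,\ldots,j+p-1$ run over all of $\mathbb{Z}_{p}$, so the displayed union is in fact a partition of $N((i,j))$ — a fact that will be convenient (though strictly speaking not needed) for the distance computations in the sequel.
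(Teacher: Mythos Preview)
Your proof is correct and is exactly the routine verification the paper intends: the paper's own proof is simply ``By definition,'' and you have merely spelled out what that means. There is no difference in approach.
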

	\begin{proof}
		By definition.
	\end{proof}

	\begin{lemma}\label{lem::common_neighbor}
		Let  $\Gamma=\Gamma(p^s\times p,R_{0},R_{1},\ldots,R_{p-1})$ be a connected Cayley graph over $\mathbb{Z}_{p^s}\oplus\mathbb{Z}_{p}$. For every $(i,j)\in \mathbb{Z}_{p^s}\oplus \mathbb{Z}_p$, 
		$$
		\begin{aligned}
			|N((0,0))\cap N((i,j))|&=|R_0\cap (i-R_j)|+|R_1\cap (i-R_{j-1})|+\cdots+|R_j\cap (i-R_{0})|\\
			&~~~+|R_{j+1}\cap (i-R_{p-1})|+\cdots+ |R_{p-1}\cap (i-R_{j+1})|.
		\end{aligned}
		$$
	\end{lemma}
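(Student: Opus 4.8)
The plan is a direct vertex count organized by the second coordinate. By Lemma~\ref{lem::neighbor}, $N((0,0))$ is the union of the sets $(R_a,a)$ over $a\in\mathbb{Z}_p$ and $N((i,j))$ is the union of the sets $(i+R_b,j+b)$ over $b\in\mathbb{Z}_p$; in each case the pieces are pairwise disjoint, since they have distinct second coordinates. Hence a vertex $(x,y)\in\mathbb{Z}_{p^s}\oplus\mathbb{Z}_p$ lies in $N((0,0))$ exactly when $x\in R_y$, and lies in $N((i,j))$ exactly when $x\in i+R_{y-j}$ (taking $b=y-j$).

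First I would fix $y\in\mathbb{Z}_p$ and count the common neighbours with that second coordinate: these are precisely the vertices $(x,y)$ with $x\in R_y\cap(i+R_{y-j})$, so there are $|R_y\cap(i+R_{y-j})|$ of them. Summing over all $y\in\mathbb{Z}_p$ gives
$$|N((0,0))\cap N((i,j))|=\sum_{y\in\mathbb{Z}_p}\bigl|R_y\cap(i+R_{y-j})\bigr|.$$

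Finally I would rewrite each term using that the connection set of $\Gamma$ is inverse-closed, which is exactly the condition $R_0=-R_0$ and $R_i=-R_{p-i}$ built into the notation $\Gamma(p^s\times p,R_0,\ldots,R_{p-1})$; equivalently $R_b=-R_{-b}$ for every $b\in\mathbb{Z}_p$. Thus $i+R_{y-j}=i-R_{j-y}$, and reindexing the sum by $a=y$ yields $\sum_{a\in\mathbb{Z}_p}|R_a\cap(i-R_{j-a})|$. Writing out the terms for $a=0,1,\ldots,j,j+1,\ldots,p-1$, with indices read modulo $p$, reproduces the displayed expression. The only point requiring care is the index bookkeeping modulo $p$ together with the sign flip $R_{-b}=-R_b$; there is no genuine obstacle beyond this routine check.
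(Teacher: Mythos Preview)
Your proposal is correct and takes essentially the same approach as the paper: the paper's proof consists of the single sentence ``The result follows from Lemma~\ref{lem::neighbor} immediately,'' and what you have written is precisely a careful unpacking of that immediate consequence, stratifying the intersection by second coordinate and then using the inverse-closure relation $R_b=-R_{-b}$ to rewrite $i+R_{y-j}$ as $i-R_{j-y}$.
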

	\begin{proof}
		The result follows from Lemma \ref{lem::neighbor} immediately.
	\end{proof}

	Let $\omega=e^{2\pi \mathbf{i}/p^s}$ be the primitive $p^s$-th root of unity, and let  $\mathbb{F}=\mathbb{Q}(\omega)$. Suppose that  $(\mathbb{F}^{\mathbb{Z}_{p^s}},\cdot)$ and $(\mathbb{F}^{\mathbb{Z}_{p^s}},\ast)$ are $\mathbb{F}$-algebras defined as in Section \ref{section::2}, and that $\mathcal{F}$ is the Fourier transformation  from $(\mathbb{F}^{\mathbb{Z}_{p^s}},\ast)$ to $(\mathbb{F}^{\mathbb{Z}_{p^s}},\cdot)$ give in  \eqref{equ::fourier1}. We denote
	\begin{equation}\label{equ::fourier5}
		\underline{\mathbf{r}}_{i,j}(z)=(\mathcal{F}\Delta_{R_{i,j}})(z)=\sum_{t\in R_{i,j}}\omega^{tz}~~\mbox,
	\end{equation}
	where $\Delta_{R_{i,j}}$ is the characteristic function of $R_{i,j}$. In particular, we denote $\underline{\mathbf{r}}_{i}=\underline{\mathbf{r}}_{i,1}=\mathcal{F}\Delta_{R_i}$. Let $\ast$ be the convolution of   $(\mathbb{F}^{\mathbb{Z}_{p^s}},\ast)$ defined  as in \eqref{equ::fourier}. For  $A,B\subseteq \mathbb{Z}_{p^s}$, we can verify that
	\begin{equation}\label{equ::fourier2}
		(\Delta_A\ast \Delta_B)(i)=|(i-A)\cap B|=|(i-B)\cap A|,~~i\in\mathbb{Z}_{p^s}.
	\end{equation}
	
	\begin{lemma}\label{lem::Fourier}
		Let  $\Gamma=\Gamma(p^s\times p,R_{0},R_{1},\ldots,R_{p-1})$ be a distance-regular Cayley graph over $\mathbb{Z}_{p^s}\oplus\mathbb{Z}_{p}$. Then
		$$
		\underline{\mathbf{r}}_{0}\underline{\mathbf{r}}_{0}+\underline{\mathbf{r}}_{1}\underline{\mathbf{r}}_{p-1}+\cdots+\underline{\mathbf{r}}_{p-1}\underline{\mathbf{r}}_{1}=k+\lambda \underline{\mathbf{r}}_{0}+\mu\underline{\mathbf{r}}_{0,2},
		$$
		and  
		$$
		\underline{\mathbf{r}}_{0}\underline{\mathbf{r}}_{j}+\underline{\mathbf{r}}_{1}\underline{\mathbf{r}}_{j-1}+\cdots+\underline{\mathbf{r}}_{j}\underline{\mathbf{r}}_{0}+\underline{\mathbf{r}}_{j+1}\underline{\mathbf{r}}_{p-1}+\cdots+\underline{\mathbf{r}}_{p-1}\underline{\mathbf{r}}_{j+1}=\lambda\underline{\mathbf{r}}_{j}+\mu \underline{\mathbf{r}}_{j,2}
		$$ 
		for $1\leq j\leq p-1$.
	\end{lemma}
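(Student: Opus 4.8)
The plan is to derive both identities by first establishing their analogues in the convolution algebra $(\mathbb{F}^{\mathbb{Z}_{p^s}},\ast)$ and then transporting them to $(\mathbb{F}^{\mathbb{Z}_{p^s}},\cdot)$ via the Fourier transformation $\mathcal{F}$, which is an algebra isomorphism from $(\mathbb{F}^{\mathbb{Z}_{p^s}},\ast)$ onto $(\mathbb{F}^{\mathbb{Z}_{p^s}},\cdot)$. Under $\mathcal{F}$ a convolution $\Delta_A\ast\Delta_B$ becomes the point-wise product $(\mathcal{F}\Delta_A)(\mathcal{F}\Delta_B)$, the indicator $\Delta_{R_{i,j}}$ becomes $\underline{\mathbf{r}}_{i,j}$ by \eqref{equ::fourier5}, and $\Delta_0$ becomes the constant function $1$ (which is exactly what will produce the scalar $k$ in the first identity).

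Concretely, I would fix $j\in\{0,1,\dots,p-1\}$ and compute the function $\varphi_j\colon\mathbb{Z}_{p^s}\to\mathbb{F}$, $\varphi_j(i)=|N((0,0))\cap N((i,j))|$, in two ways. On the one hand, since $\Gamma$ is distance-regular the number of common neighbors of two vertices equals $k$ when they coincide, $\lambda$ when they are adjacent, $\mu$ when they are at distance $2$, and $0$ when they are at distance at least $3$ (in the last case the triangle inequality rules out a common neighbor). Using the definitions $R_{j,1}=R_j$, $R_{0,0}=\{0\}$ and $R_{j,2}=\{u\in\mathbb{Z}_{p^s}:(u,j)\in\mathcal{N}_2\}$, this translates into $\varphi_0=k\Delta_0+\lambda\Delta_{R_0}+\mu\Delta_{R_{0,2}}$ and $\varphi_j=\lambda\Delta_{R_j}+\mu\Delta_{R_{j,2}}$ for $1\le j\le p-1$ (for $j\ne 0$ there is no $k$-term since $(0,j)\ne(0,0)$, and whether $0$ lies in $R_j$ or in $R_{j,2}$ it is already accounted for). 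On the other hand, Lemma \ref{lem::common_neighbor} gives $\varphi_j(i)=\sum_{\ell=0}^{p-1}|R_\ell\cap(i-R_{j-\ell})|$, with the subscript $j-\ell$ read modulo $p$, and by \eqref{equ::fourier2} each summand equals $(\Delta_{R_\ell}\ast\Delta_{R_{j-\ell}})(i)$; hence $\sum_{\ell=0}^{p-1}\Delta_{R_\ell}\ast\Delta_{R_{j-\ell}}=\varphi_j$ as an element of $(\mathbb{F}^{\mathbb{Z}_{p^s}},\ast)$.

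Equating the two expressions for $\varphi_j$ and applying $\mathcal{F}$ then turns the left-hand side into $\sum_{\ell=0}^{p-1}\underline{\mathbf{r}}_\ell\,\underline{\mathbf{r}}_{j-\ell}$ (indices modulo $p$, so $\underline{\mathbf{r}}_{-t}=\underline{\mathbf{r}}_{p-t}$), and the right-hand side into $k+\lambda\underline{\mathbf{r}}_0+\mu\underline{\mathbf{r}}_{0,2}$ when $j=0$, respectively $\lambda\underline{\mathbf{r}}_j+\mu\underline{\mathbf{r}}_{j,2}$ when $1\le j\le p-1$; writing out the cyclic sum in the order in which the products appear in the statement reproduces exactly the two displayed identities. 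Everything here is routine bookkeeping; the only points demanding care are the handling of the cyclic shift $\ell\mapsto j-\ell\pmod p$ induced by the $\mathbb{Z}_p$-coordinate of the group and the observation that $\mathcal{F}$ sends $\Delta_0$ to the constant function $1$, so that $k\Delta_0$ becomes the scalar $k$. I do not anticipate any substantive obstacle beyond this indexing.
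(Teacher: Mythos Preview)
Your proposal is correct and follows essentially the same approach as the paper: both compute $|N((0,0))\cap N((i,j))|$ via Lemma~\ref{lem::common_neighbor}, rewrite each term as a convolution using \eqref{equ::fourier2}, equate this with the distance-regular expression $k\Delta_0+\lambda\Delta_{R_j}+\mu\Delta_{R_{j,2}}$ (the $k\Delta_0$ term appearing only when $j=0$), and then apply $\mathcal{F}$. Your write-up is slightly more explicit about the cyclic indexing and the fact that $\mathcal{F}\Delta_0$ is the constant function $1$, but the substance is identical.
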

	
	\begin{proof}
		Let $i\in \mathbb{Z}_{p^s}$. According to Lemma \ref{lem::common_neighbor} and \eqref{equ::fourier2},  we have
		\begin{equation}\label{equ::fourier3}
			\begin{aligned}
				&~~~~(\Delta_{R_{0}}\ast \Delta_{R_{0}})(i)+(\Delta_{R_{1}}\ast \Delta_{R_{p-1}})(i)+\cdots+(\Delta_{R_{p-1}}\ast \Delta_{R_{1}})(i)\\
				&=|R_0\cap (i-R_0)|+|R_1\cap (i-R_{p-1})|+\cdots+|R_{p-1}\cap (i-R_1)|\\
				&=|N((0,0))\cap N((i,0))|\\
				&=(k\Delta_0+\lambda\Delta_{R_0}+\mu\Delta_{R_{0,2}})(i),\\
			\end{aligned}
		\end{equation}
		and
		\begin{equation}\label{equ::fourier4}
			\begin{aligned}
				&~~~~(\Delta_{R_{0}}\ast \Delta_{R_{j}})(i)+(\Delta_{R_{1}}\ast \Delta_{R_{j-1}})(i)+\cdots+(\Delta_{R_{j}}\ast \Delta_{R_{0}})(i)+(\Delta_{R_{j+1}}\ast \Delta_{R_{p-1}})(i)\\
				&~~~~+\cdots+(\Delta_{R_{p-1}}\ast \Delta_{R_{j+1}})(i)\\
				&=|R_0\cap (i-R_j)|+|R_1\cap (i-R_{j-1})|+\cdots+|R_j\cap (i-R_{0})|+|R_{j+1}\cap (i-R_{p-1})|\\
				&~~~+\cdots+ |R_{p-1}\cap (i-R_{j+1})|\\
				&=|N((0,0))\cap N((i,j))|\\
				&=(\lambda\Delta_{R_j}+\mu\Delta_{R_{j,2}})(i)\\
			\end{aligned}
		\end{equation}
		for $1\leq j\leq p-1$. Then, by applying the Fourier transformation $\mathcal{F}$ on both sides of \eqref{equ::fourier3} and \eqref{equ::fourier4}, we obtain the result immediately.
	\end{proof}

	\begin{lemma}\label{lem::key1}
		Let $p>2$ be a prime. There are no antipodal non-bipartite distance-regular Cayley graphs over $\mathbb{Z}_{p^s}\oplus\mathbb{Z}_{p}$ ($s\geq 1$) with diameter $3$.
	\end{lemma}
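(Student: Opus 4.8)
The plan is a proof by contradiction. Suppose $\Gamma$ is an antipodal non-bipartite distance-regular Cayley graph over $G:=\mathbb{Z}_{p^s}\oplus\mathbb{Z}_p$ with diameter $3$, valency $k$, and parameters $\lambda=a_1$, $\mu=c_2$. When $s=1$ we are done immediately: by Lemma \ref{lem::1}, every distance-regular Cayley graph over $\mathbb{Z}_p\oplus\mathbb{Z}_p$ is a complete graph, a complete multipartite graph, or the line graph of a transversal design, all of diameter at most $2$. So from now on $s\ge 2$; moreover, since a connected $2$-regular graph of diameter $3$ is a cycle on $6$ or $7$ vertices while $p^{s+1}\ge 9$, we may assume $k\ge 3$.

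First I reduce to an antipodal cover of a complete graph. By Lemma \ref{lem::imprimitive}(ii) the antipodal quotient $\overline{\Gamma}$ is distance-regular of diameter $\lfloor 3/2\rfloor=1$, hence $\overline{\Gamma}\cong K_n$; if $r=p^{a}$ denotes the covering index then $n=k+1=p^{b}$ with $a+b=s+1$ and $a,b\ge 1$, and by Lemma \ref{lem::antipodal_DRG} we have $k=\mu(r-1)+\lambda+1$. If $\lambda=\mu$ this yields $\mu p^{a}=p^{b}-2$, impossible since $p$ is odd; hence $\lambda\ne\mu$, and Lemma \ref{lem::antipodal_DRG} then guarantees that $\Gamma$ is integral. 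By Lemma \ref{lem::int_Cay}, the connection set $S$ therefore lies in $\mathbb{B}(\mathcal{F}_G)$; that is, $S$ is a disjoint union of atoms $[g]=\{x\in G:\langle x\rangle=\langle g\rangle\}$.

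Next I determine the shape of $S$. Let $B$ be the antipodal class of the identity; by Lemma \ref{lem::block} it is a subgroup of order $p^{a}$, and because $\Gamma$ is an $r$-fold cover of $K_n$ with the cosets of $B$ as fibres, $S\cap B=\emptyset$ and $S$ meets each of the remaining $p^{b}-1$ cosets in exactly one point. An atom $[g]$ with $\langle g\rangle\not\subseteq B$ meets every coset of $B$ it touches in $|\langle g\rangle\cap B|$ points, so every atom $[g]\subseteq S$ must have $\langle g\rangle\cap B=\{0\}$; projecting to $G/B$, it follows that $C\mapsto CB/B$ is a bijection from $\{\langle g\rangle:[g]\subseteq S\}$ onto the family of all cyclic subgroups of $G/B$ of order $\ge p$, and also that $\langle S\rangle=G$. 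In particular, for every cyclic $D\le G/B$ with $|D|\ge p$ the preimage $\pi^{-1}(D)$ contains a cyclic subgroup meeting $B$ trivially, so $\pi^{-1}(D)=B\oplus C$ for a nontrivial cyclic $C$, and in particular $\pi^{-1}(D)$ is noncyclic.

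It remains to reach a contradiction by a case analysis on $B$. If $B$ has rank $2$ then $\mathrm{Soc}(B)=\mathrm{Soc}(G)$, so any cyclic $C$ with $C\cap B=\{0\}$ would satisfy $\mathrm{Soc}(C)\subseteq B$ and hence $C=\{0\}$ — contradicting that $S$ contains a nonzero atom. If $B$ is cyclic then $\pi(\mathrm{Soc}(G))$ is an order-$p$ subgroup contained in every order-$p$ subgroup of $G/B$, which forces $G/B$ to be cyclic; one then excludes the subcases $b=1$ (where $\langle S\rangle$ has order $p<p^{s+1}$) and $a,b\ge 2$ (where, taking $D$ of order $p^{2}$, the cyclic group $B\cong\mathbb{Z}_{p^{a}}$ is not pure — hence not a direct summand — in $\pi^{-1}(D)\cong\mathbb{Z}_{p^{a+1}}\oplus\mathbb{Z}_p$, so has no cyclic complement of order $p^{2}$). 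This leaves $a=1$, $b=s\ge 2$, which I expect to be the real obstacle: after an automorphism of $G$ one has $B=0\oplus\mathbb{Z}_p$, the required complements $C_i=\langle(p^{s-i},t_i)\rangle$ ($1\le i\le s$) exist for every choice of $t_i\in\mathbb{Z}_p$, and $\Gamma$ may even be connected, so the group-theoretic obstructions of the earlier cases disappear and one must compute the spectrum explicitly. Indexing eigenvalues by the characters $\chi_{c,d}$ of $G$ (with $c\in\mathbb{Z}_{p^s}$, $d\in\mathbb{Z}_p$) and using the Fourier transformation \eqref{equ::fourier1}, the eigenvalue attached to $\chi_{c,d}$ is $\sum_{i=1}^{s}\sum_{x\in[g_i]}\chi_{c,d}(x)$ with $g_i$ a generator of $C_i$, and each inner sum is a generalized Ramanujan sum taking only the values $\phi(p^{i})$, $-p^{i-1}$, or $0$. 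One finds $\eta_{c,0}=-1$ for all $c\ne0$; since $-1$ has multiplicity exactly $k$ by Lemma \ref{lem::antipodal_DRG}, no character with $d\ne0$ can give eigenvalue $-1$, and together with $\langle S\rangle=G$ this forces exactly one of the $t_i$ to be nonzero, with index $i=1$. For this last remaining shape of $S$ a direct evaluation produces the five distinct eigenvalues $p^{s}-1>p^{s}-p-1>p-1>-1>-p-1$, contradicting the fact that a distance-regular graph of diameter $3$ has exactly four distinct eigenvalues.
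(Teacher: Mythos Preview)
Your approach is correct and genuinely different from the paper's. The paper first reduces to covering index $r=p$ via a minimal-counterexample argument using Lemma~\ref{lem::quotient_graph}, then splits into two cases according to whether $\mathcal{N}_3\subseteq\langle(1,0)\rangle$. In Case~A the Fourier identities produce a difference set in $\mathbb{Z}_{p^s}$ and the contradiction comes from the deep nonexistence results of Lemmas~\ref{lem::diff_set1}--\ref{lem::diff_set2}; in Case~B the argument goes through transversals (Lemmas~\ref{lem::Fourier1}, \ref{lem::Fourier3}) and the orbit structure of $\mathbb{Z}_{p^s}^\ast$. You instead exploit integrality (Lemma~\ref{lem::int_Cay}) to write $S$ as a union of atoms $[g]$, observe that each such $\langle g\rangle$ must be a cyclic complement to $B$ inside its own $\pi^{-1}(D)$, and then use purely group-theoretic obstructions (rank of the socle, purity, the fact that every noncyclic subgroup of $\mathbb{Z}_{p^s}\oplus\mathbb{Z}_p$ is isomorphic to some $\mathbb{Z}_{p^m}\oplus\mathbb{Z}_p$) to eliminate all cases except $a=1$, $B=0\oplus\mathbb{Z}_p$, finishing with an explicit character computation. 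Your route is considerably more elementary: it avoids difference-set theory entirely, never needs the minimal-counterexample reduction to $r=p$, and handles all covering indices $p^a$ uniformly.

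One remark on the endgame. Your assertion that the multiplicity constraint ``no character with $d\neq 0$ gives eigenvalue $-1$'' together with connectivity forces exactly $t_1\neq 0$ is stated without justification, and in fact understates what you have: that constraint is violated for \emph{every} choice of the $t_i$. Indeed, take any $c$ with $p\nmid c$; then only the $i=1$ summand survives, and it equals $-1$ whenever $c+dt_1\not\equiv 0\pmod p$, which for $p\geq 3$ occurs for some $d\neq 0$ regardless of the value of $t_1$. So the contradiction already arrives at this point, and your (correct) five-eigenvalue computation for the case $t_1\neq 0$, $t_i=0$ ($i\geq 2$) is superfluous. This does not break your proof---the final case you analyze is reached under a vacuously true hypothesis, and the five-eigenvalue count is an independent contradiction---but the argument is cleaner if you simply note that the $-1$-multiplicity bound is already exceeded.
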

	\begin{proof}
		By Lemma \ref{lem::1}, the case $s=1$ is obvious, and we can suppose that $s\geq 2$.  By contradiction, assume that $\Gamma=\Gamma(p^s\times p,R_{0},R_{1},\ldots,R_{p-1})$  is an antipodal non-bipartite distance-regular Cayley graph of diameter $3$ over $\mathbb{Z}_{p^s}\oplus\mathbb{Z}_{p}$  with $s$ ($s\geq 2$) as small as possible. Let $k$ and $r$ ($r\geq 2$) denote the valency and the common size of antipodal classes (or fibres) of $\Gamma$, respectively. According to Lemma \ref{lem::antipodal_DRG},  $k+1=\frac{p^{s+1}}{r}$,  and $\Gamma$ has the intersection array
		\begin{equation}\label{equ::1.2}
			\{k,\mu(r-1),1;1,\mu,k\}
		\end{equation}
		and eigenvalues $k$, $\theta_1$, $\theta_2=-1$, $\theta_3$, where
		\begin{equation}\label{equ::1.3}
			\theta_1=\frac{\lambda-\mu}{2}+\delta,~~\theta_3=\frac{\lambda-\mu}{2}-\delta~~\mbox{and}~~\delta=\sqrt{k+\left(\frac{\lambda-\mu}{2}\right)^2}.
		\end{equation}
		Note that $r$ is a prime power of $p$. Let $H=\mathcal{N}_3\cup\{(0,0)\}$. Then $H$ is an antipodal class of $\Gamma$, and  $|H|=r$. Since $\mathbb{Z}_{p^s}\oplus\mathbb{Z}_{p}$ acts regularly on  $V(\Gamma)$ by left multiplication, the antipodal classes of $\Gamma$ form an imprimtivity system for  $\mathbb{Z}_{p^s}\oplus\mathbb{Z}_{p}$. By Lemma \ref{lem::block}, $H$ is a subgroup of $\mathbb{Z}_{p^s}\oplus\mathbb{Z}_{p}$. If $r$ is not  prime, i.e., $r\neq p$, then $H$ has  a non-trivial subgroup $K$. Let $\mathcal{B}$ denote the partition consisting of all orbits of $K$ acting on $V(\Gamma)$ by left multiplication.  As $K$ is normal in $\mathbb{Z}_{p^s}\oplus\mathbb{Z}_{p}$, the partition $\mathcal{B}$ is also an imprimtivity system for $\mathbb{Z}_{p^s}\oplus\mathbb{Z}_{p}$. Then it follows from  Lemma \ref{lem::block} (i) that the quotient graph $\Gamma_\mathcal{B}$ is a Cayley graph over the cyclic group or the group $\mathbb{Z}_{p^{s'}}\oplus\mathbb{Z}_{p}$, where $1\leq s'\leq s-1$. Observe that $\mathcal{B}$ is  an equitable partition of $\Gamma$, and each block of $\mathcal{B}$ is contained in some fibre of $\Gamma$ and is neither a single vertex nor a fibre.  By Lemma \ref{lem::quotient_graph}, $\Gamma_\mathcal{B}$ is an antipodal distance-regular graph with diameter $3$. If $\Gamma_\mathcal{B}$ is bipartite, then $\Gamma$ is also bipartite, a contradiction. Hence, $\Gamma_\mathcal{B}$ is an antipodal non-bipartite distance-regular Cayley graph of diameter $3$ over the cyclic group or the group  $\mathbb{Z}_{p^{s'}}\oplus\mathbb{Z}_{p}$. By Lemma \ref{lem::cir_DRG}, we assert that the former case cannot occur. For the later case, 
		from Lemma \ref{lem::1} we can deduce that $s'\geq 2$. However, this violates the  minimality  of $s$.
		Therefore, $r=p$ and $k=p^s-1$. If $\lambda=\mu$, by  \eqref{equ::1.2}, we have $p^s-1=k=\mu p+1$, implying that $p=2$, a contradiction. Thus $\lambda\neq\mu$, and by Lemma \ref{lem::antipodal_DRG}, $\Gamma$ is integral, and $2\delta\in\mathbb{Z}$.  We consider the following two cases.
		
		{\flushleft \bf Case A.} $\mathcal{N}_3\not\subseteq\langle (1,0)\rangle$. 
		
		In this case, there exists an element $(i,j)\subseteq\mathcal{N}_3$ with $j\neq0$. Since $H=\mathcal{N}_3\cup \{(0,0)\}$ has the prime order $p$, there exists some element $(u,1)\in H$ such that $H=\langle (u,1)\rangle$ and $p^s|up$. Note that the mapping $\sigma_u$ defined by $\sigma_u((i,j))=(i-uj,j)$ is an automorphism of $\mathbb{Z}_{p^s}\oplus\mathbb{Z}_{p}$. As $\sigma_u((u,1))=(0,1)$, we may assume without loss of generality that $H=\langle (0,1)\rangle$.
		
		Since $\Gamma$ is antipodal, the vertices in $\mathcal{N}_3=\langle (0,1)\rangle\setminus\{(0,0)\}$ have disjoint neighborhoods, and $\mathcal{N}_2$ is just the  union of these neighborhoods.
		Recall that  $N((0,i))=(R_0,i)\cup (R_1,i+1)\cup \cdots\cup (R_{p-1},i+p-1)$. As $N((0,i))\cap N((0,i'))=\emptyset$ whenever  $i\neq i'$, we assert that  $R_j\cap R_{j'}=\emptyset$ where $0\leq j\neq j'\leq p-1$. Let $R=\cup_{i=0}^{p-1} R_i$. It is easy to see that  $R=\mathbb{Z}_{p^s}\setminus\{0\}$, and  $R_{i,2}=R\backslash R_i$ for $0\leq i\leq p-1$. Then
		\begin{equation}\label{equ::k1} \underline{\mathbf{r}}_{0}+\underline{\mathbf{r}}_{1}+\underline{\mathbf{r}}_{2}+\cdots+\underline{\mathbf{r}}_{p-1}=p^s\Delta_0-1,
		\end{equation}
		and hence
		\begin{equation*}\label{equ::k1_1} \underline{\mathbf{r}}_{i,2}=p^s\Delta_0-1-\underline{\mathbf{r}}_{i},~0\leq i\leq p-1.
		\end{equation*}
		Combining this with Lemma \ref{lem::Fourier}, we have
		\begin{equation}\label{equ::k2}
			\left\{\begin{aligned}
				&\underline{\mathbf{r}}_{0}\underline{\mathbf{r}}_{0}+\underline{\mathbf{r}}_{1}\underline{\mathbf{r}}_{p-1}+\underline{\mathbf{r}}_{2}\underline{\mathbf{r}}_{p-2}+\cdots+\underline{\mathbf{r}}_{p-1}\underline{\mathbf{r}}_{1}=k+\lambda \underline{\mathbf{r}}_{0}+\mu(p^s\Delta_0-1-\underline{\mathbf{r}}_{0}),\\ &\underline{\mathbf{r}}_{0}\underline{\mathbf{r}}_{1}+\underline{\mathbf{r}}_{1}\underline{\mathbf{r}}_{0}+\underline{\mathbf{r}}_{2}\underline{\mathbf{r}}_{p-1}+\cdots+\underline{\mathbf{r}}_{p-1}\underline{\mathbf{r}}_{2}=\lambda \underline{\mathbf{r}}_{1}+\mu(p^s\Delta_0-1-\underline{\mathbf{r}}_{1}),\\ 
				&~~~~~~~~~~~~~~~~~~~~~~~~~~~~~~~~~~\vdots\\ &\underline{\mathbf{r}}_{0}\underline{\mathbf{r}}_{p-1}+\underline{\mathbf{r}}_{1}\underline{\mathbf{r}}_{p-2}+\underline{\mathbf{r}}_{2}\underline{\mathbf{r}}_{p-3}+\cdots+\underline{\mathbf{r}}_{p-1}\underline{\mathbf{r}}_{0}=\lambda \underline{\mathbf{r}}_{p-1}+\mu(p^s\Delta_0-1-\underline{\mathbf{r}}_{p-1}).\\ \end{aligned}\right.
		\end{equation}	
		Let $\epsilon$ be a primitive $p$-th root of unity. Multiplying both sides of the equations in \eqref{equ::k2} by coefficients $1,\epsilon,\epsilon^2,\ldots,\epsilon^{p-1}$  respectively, and adding them up, we obtain 
		\begin{equation*}\label{equ::k3}
			(\underline{\mathbf{r}}_{0}+\epsilon\underline{\mathbf{r}}_{1}+\epsilon^2\underline{\mathbf{r}}_{2}+\ldots+\epsilon^{p-1}\underline{\mathbf{r}}_{p-1})^2=k+(\lambda-\mu)(\underline{\mathbf{r}}_{0}+\epsilon\underline{\mathbf{r}}_{1}+\epsilon^2\underline{\mathbf{r}}_{2}+\ldots+\epsilon^{p-1}\underline{\mathbf{r}}_{p-1}) .
		\end{equation*}	
		Thus $\mathrm{Im}(\underline{\mathbf{r}}_{0}+\epsilon\underline{\mathbf{r}}_{1}+\epsilon^2\underline{\mathbf{r}}_{2}+\cdots+\epsilon^{p-1}\underline{\mathbf{r}}_{p-1})\subseteq\{\theta_1,\theta_3\}$, and we can assume that
		\begin{equation}\label{equ::k4}
			\underline{\mathbf{r}}_{0}+\epsilon\underline{\mathbf{r}}_{1}+\epsilon^2\underline{\mathbf{r}}_{2}+\cdots+\epsilon^{p-1}\underline{\mathbf{r}}_{p-1}=\theta_1\Delta_{A}+\theta_3\Delta_{B},
		\end{equation}	
		where $\{A,B\}$ is a partition of $\mathbb{Z}_{p^s}$.
		Combining \eqref{equ::1.3} and \eqref{equ::k4} yields that
		\begin{equation}\label{equ::a1}
			\Delta_A=\frac{1}{2\delta}(\underline{\mathbf{r}}_{0}+\epsilon\underline{\mathbf{r}}_{1}+\epsilon^2\underline{\mathbf{r}}_{2}+\cdots+\epsilon^{p-1}\underline{\mathbf{r}}_{p-1}-\theta_3).
		\end{equation}
		As $\underline{\mathbf{r}}_0(-z)=\underline{\mathbf{r}}_{0}(z)$ and $\underline{\mathbf{r}}_i(-z)=\underline{\mathbf{r}}_{p-i}(z)$ for $1\leq i\leq p-1$, we can deduce from \eqref{equ::a1} that
		\begin{equation}\label{equ::a1_1}
			\Delta_{-A}=\frac{1}{2\delta}(\underline{\mathbf{r}}_{0}+\epsilon\underline{\mathbf{r}}_{p-1}+\epsilon^2\underline{\mathbf{r}}_{p-2}+\cdots+\epsilon^{p-1}\underline{\mathbf{r}}_{1}-\theta_3).
		\end{equation}
		By applying the Fourier transformation $\mathcal{F}$ to both sides of \eqref{equ::a1} and \eqref{equ::a1_1}, and then  using \eqref{equ::inversion formula}, we obtain
		\begin{equation}\label{equ::a2}
			\mathcal{F}\Delta_A=\frac{p^s}{2\delta}(\Delta_{R_{0}}+\epsilon\Delta_{R_{p-1}}+\epsilon^2\Delta_{R_{p-2}}+\cdots+\epsilon^{p-1}\Delta_{R_{1}}-\theta_3\Delta_0),
		\end{equation}
		and
		\begin{equation}\label{equ::a2_1}
			\mathcal{F}\Delta_{-A}=\frac{p^s}{2\delta}(\Delta_{R_{0}}+\epsilon\Delta_{R_{1}}+\epsilon^2\Delta_{R_{2}}+\cdots+\epsilon^{p-1}\Delta_{R_{p-1}}-\theta_3\Delta_0).
		\end{equation}
		Combining  \eqref{equ::inversion formula},  \eqref{equ::fourier2}, \eqref{equ::k1}, \eqref{equ::a2} and \eqref{equ::a2_1}, we can deduce that 
		\begin{equation*}\label{equ::a3}
			\begin{aligned}
				|A\cap (i+A)|&=(\Delta_{A}\ast\Delta_{-A})(i)\\
				&=\frac{1}{p^s}\cdot \mathcal{F}((\mathcal{F}\Delta_{A})(\mathcal{F}\Delta_{-A}))(i)\\
				&=\frac{p^s}{4\delta^2}\cdot \mathcal{F}(\Delta_{R_{0}}+\Delta_{R_{1}}+\Delta_{R_{2}}+\cdots+\Delta_{R_{p-1}}+\theta_3^2\Delta_0)(i)\\
				&=\frac{p^s}{4\delta^2}\cdot (\underline{\mathbf{r}}_{0}+\underline{\mathbf{r}}_{1}+\underline{\mathbf{r}}_{2}+\cdots+\underline{\mathbf{r}}_{p-1}+\theta_3^2)(i)\\
				&=\frac{p^s}{4\delta^2}\cdot (p^s\Delta_0-1+\theta_3^2)(i)
			\end{aligned}
		\end{equation*}
		for any $i\in\mathbb{Z}_{p^s}$. 
		Thus we conclude that $A$ is a  $(p^s,\frac{p^{2s}}{4\delta^2}+\frac{p^s}{4\delta^2}(\theta_3^2-1),\frac{p^s}{4\delta^2}(\theta_3^2-1))$-difference set of $\mathbb{Z}_{p^s}$, and it follows that $\frac{p^{2s}}{4\delta^2}\in\mathbb{Z}$. Since $2\delta\in\mathbb{Z}$, we assert that  $\frac{p^{2s}}{4\delta^2}=(\frac{p^s}{2\delta})^2$ must be a power of prime $p$, and so is $\frac{p^s}{2\delta}$.  If $p>3$, 
		by Lemma \ref{lem::diff_set1}, $\frac{p^{2s}}{4\delta^2}$ is coprime to $p^s$, and so $\frac{p^{2s}}{4\delta^2}=1$, i.e.,  $k+1=p^s=2\delta$. Then from  \eqref{equ::1.3} we can deduce that  $(k+1)^2=4k+(\lambda-\mu)^2$, and hence $k-1=\lambda-\mu$ or $\mu-\lambda$. Since $k=\mu(p-1)+\lambda+1>\mu+\lambda+1$, we obtain a contradiction immediately. Now suppose that $p=3$. As above, the case  $p^s=2\delta$ cannot occur, and we can suppose that  $\frac{p^s}{2\delta}=\frac{3^s}{2\delta}=3^t$ for some $1\leq t\leq s$. Then, by Lemma \ref{lem::diff_set2}, one of the following holds:
		\begin{enumerate}[$(i)$]\setlength{\itemsep}{0pt}
			\item $6\times 3^s=(5\times 3^t+3)(5\times 3^t-3)$;
			\item $12\times 3^s=(7\times 3^t+3)(7\times 3^t-3)$;
			\item $15\times 3^s=(8\times 3^t+3)(8\times 3^t-3)$.
		\end{enumerate}
		Note that the difference of the two factors on the right side of (i)-(iii) is always equal to $6$. A careful calculation shows that the left side of (i) and (ii) cannot be written as the product of two factors with  difference  $6$, while the left side of (iii) can be written as the the product of two factors with difference $6$ only when $s=2$. However, in this situation, the equality in (iii) cannot hold because $(8\times 3^t+3)(8\times 3^t-3)\geq (8\times 3+3)(8\times 3-3)>15\times 3^2$.
		Therefore, we obtain a contradiction. 
		
		{\flushleft \bf Case B.} $\mathcal{N}_3\subseteq\langle (1,0)\rangle$.  
		
		In this case, $H=\mathcal{N}_3\cup \{(0,0)\}$ is the subgroup of $\langle (1,0)\rangle$ with order $p$, and therefore  $\mathcal{N}_3=\{(ip^{s-1},0)\mid i=1,2,\ldots,p-1\}$. Hence,  $R_{0,3}= p^{s-1}\mathbb{Z}_{p^s}\setminus\{0\}$, and $R_{i,3}=\emptyset$ whenever $i\neq0$.  Before going further, similarly as  in \cite[Lemma 4.4]{MP07}, we need the following claim.

		\begin{claim}\label{claim::1}
			The sets  $R_0\cup \{0\}$ and $R_i$ where $i\neq0$ are transversals of the subgroup $p^{s-1}\mathbb{Z}_{p^s}$ in $\mathbb{Z}_{p^s}$.
		\end{claim}
		\renewcommand\proofname{\it{Proof of Claim \ref{claim::1}}}
		\begin{proof}
			First assume that $|R_i\cap (\ell+p^{s-1}\mathbb{Z}_{p^s})|\geq 2$ for some $\ell\in \mathbb{Z}_{p^s}$. Then there exists some $j\in\{1,\ldots,p-1\}$ such that $jp^{s-1}\in R_i-R_i$. Thus  $(j p^{s-1},0)\in\mathcal{N}_1\cup \mathcal{N}_2$, contrary to  the fact that $(j p^{s-1},0)\in \mathcal{N}_3$. Hence, $|R_i\cap (\ell+p^{s-1}\mathbb{Z}_{p^s})|\leq 1$ for all $\ell\in \mathbb{Z}_{p^s}$. Similarly, $|(R_0\cup \{0\})\cap (\ell+p^{s-1}\mathbb{Z}_{p^s})|\leq 1$ for all $\ell\in \mathbb{Z}_{p^s}$. Now assume that $R_i\cap (\ell+p^{s-1}\mathbb{Z}_{p^s})=\emptyset$ for some  $\ell\in \mathbb{Z}_{p^s}$. Then $\ell+p^{s-1}\mathbb{Z}_{p^s}\subseteq R_{i,2}$ due to $R_{i,0}=R_{i,3}=\emptyset$. Since each vertex of $\mathcal{N}_2$ has a neighbor in $\mathcal{N}_3$, there exists some $j\in\{1,\ldots,p-1\}$ such that $(jp^{s-1},0)\in \mathcal{N}_3$ is adjacent to $(\ell+p^{s-1},i)\in \mathcal{N}_2$. This implies that  $\ell+(1-j)p^{s-1}\in R_i$, which is impossible because $R_i\cap (\ell+p^{s-1}\mathbb{Z}_{p^s})=\emptyset$. Hence, $R_i$ has non-empty intersection with every coset of $p^{s-1}\mathbb{Z}_{p^s}$ in $\mathbb{Z}_{p^s}$. Similarly, $R_0\cup \{0\}$ has non-empty intersection with every coset of $p^{s-1}\mathbb{Z}_{p^s}$ in $\mathbb{Z}_{p^s}$. Therefore, we conclude that  $R_i$ and $R_0\cup \{0\}$ are transversals of the subgroup $p^{s-1}\mathbb{Z}_{p^s}$ in $\mathbb{Z}_{p^s}$. 
		\end{proof}
		
		By Claim \ref{claim::1}, $|R_0|=p^{s-1}-1$, and $|R_i|=p^{s-1}$ for $1\leq i\leq p-1$. Since $R_{0,2}=\mathbb{Z}_{p^s}\setminus(p^{s-1}\mathbb{Z}_{p^s}\cup R_0)$ and $R_{i,2}=\mathbb{Z}_{p^s}\setminus R_i$, we have $|R_{0,2}|=(p-1)|R_0|$ and $|R_{i,2}|=(p-1)|R_i|$  for $1\leq i\leq p-1$. Furthermore, by \eqref{equ::fourier5}, we obtain $\underline{\mathbf{r}}_{0,2}=p^s\Delta_0-p\Delta_{p\mathbb{Z}_{p^s}}-\underline{\mathbf{r}}_0$
		and $\underline{\mathbf{r}}_{i,2}=n\Delta_0-\underline{\mathbf{r}}_i$. Combining this with  Lemma \ref{lem::Fourier} yields that
		\begin{equation}\label{equ::l1}
			\left\{\begin{aligned}
				&\underline{\mathbf{r}}_{0}\underline{\mathbf{r}}_{0}+\underline{\mathbf{r}}_{1}\underline{\mathbf{r}}_{p-1}+\underline{\mathbf{r}}_{2}\underline{\mathbf{r}}_{p-2}+\cdots+\underline{\mathbf{r}}_{p-1}\underline{\mathbf{r}}_{1}=k+\lambda \underline{\mathbf{r}}_{0}+\mu(p^s\Delta_0-p\Delta_{p\mathbb{Z}_{p^s}}-\underline{\mathbf{r}}_{0}),\\ &\underline{\mathbf{r}}_{0}\underline{\mathbf{r}}_{1}+\underline{\mathbf{r}}_{1}\underline{\mathbf{r}}_{0}+\underline{\mathbf{r}}_{2}\underline{\mathbf{r}}_{p-1}+\cdots+\underline{\mathbf{r}}_{p-1}\underline{\mathbf{r}}_{2}=\lambda \underline{\mathbf{r}}_{1}+\mu(p^s\Delta_0-\underline{\mathbf{r}}_{1}),\\ 
				&~~~~~~~~~~~~~~~~~~~~~~~~~~~~~~~~~~\vdots\\ &\underline{\mathbf{r}}_{0}\underline{\mathbf{r}}_{p-1}+\underline{\mathbf{r}}_{1}\underline{\mathbf{r}}_{p-2}+\underline{\mathbf{r}}_{2}\underline{\mathbf{r}}_{p-3}+\cdots+\underline{\mathbf{r}}_{p-1}\underline{\mathbf{r}}_{0}=\lambda \underline{\mathbf{r}}_{p-1}+\mu(p^s\Delta_0-\underline{\mathbf{r}}_{p-1}).\\ \end{aligned}\right.
		\end{equation}
		Clearly, $\underline{\mathbf{r}}_0(0)=|R_0|=p^{s-1}-1$ and $\underline{\mathbf{r}}_1(0)=|R_1|=p^{s-1}$. Moreover, by Lemma \ref{lem::Fourier1} and Claim \ref{claim::1}, we have 
		\begin{equation}\label{equ::key0}
			\underline{\mathbf{r}}_0(z)=-1,~\mbox{for any}~z\in p\mathbb{Z}_{p^s}\setminus\{0\}.
		\end{equation}
		Now suppose $z\not\in p\mathbb{Z}_{p^s}$. Let $\epsilon$ be a primitive $p$-th root of unity. For any $i\in\{0,1,\ldots,p-1\}$, multiplying both sides of the equations in \eqref{equ::l1} by coefficients $1,\epsilon^i,\epsilon^{2i},\ldots,\epsilon^{(p-1)i}$  respectively, and adding them up, we obtain 
		\begin{equation*}\label{equ::key1}
			(\underline{\mathbf{r}}_{0}+\epsilon^i\underline{\mathbf{r}}_{1}+\epsilon^{2i}\underline{\mathbf{r}}_{2}+\cdots+\epsilon^{(p-1)i}\underline{\mathbf{r}}_{p-1})^2(z)=k+(\lambda-\mu)\cdot (\underline{\mathbf{r}}_{0}+\epsilon^i\underline{\mathbf{r}}_{1}+\epsilon^{2i}\underline{\mathbf{r}}_{2}+\cdots+\epsilon^{(p-1)i}\underline{\mathbf{r}}_{p-1})(z),
		\end{equation*}
		and hence 
		\begin{equation*}
			(\underline{\mathbf{r}}_{0}+\epsilon^i\underline{\mathbf{r}}_{1}+\epsilon^{2i}\underline{\mathbf{r}}_{2}+\cdots+\epsilon^{(p-1)i}\underline{\mathbf{r}}_{p-1})(z)\in\{\theta_1,\theta_3\},~z\not\in p\mathbb{Z}_{p^s}.
		\end{equation*}
		As $\Gamma$ is integral, we have 
		\begin{equation*}
			(\underline{\mathbf{r}}_{0}+\epsilon^i\underline{\mathbf{r}}_{1}+\epsilon^{2i}\underline{\mathbf{r}}_{2}+\cdots+\epsilon^{(p-1)i}\underline{\mathbf{r}}_{p-1})(z)\in\mathbb{Z},~z\not\in p\mathbb{Z}_{p^s},
		\end{equation*}
		for all $i\in\{0,1,\ldots,p-1\}$.  Therefore,
		$$
		\sum_{i=0}^{p-1}(\underline{\mathbf{r}}_{0}+\epsilon^i\underline{\mathbf{r}}_{1}+\epsilon^{2i}\underline{\mathbf{r}}_{2}+\cdots+\epsilon^{(p-1)i}\underline{\mathbf{r}}_{p-1})(z)= (p-1)\cdot\underline{\mathbf{r}}_{0}(z)\in \mathbb{Z},~z\not\in p\mathbb{Z}_{p^s}.
		$$
		Combining this with \eqref{equ::key0}, we assert that $\mathrm{Im}(\underline{\mathbf{r}}_{0})\subseteq\mathbb{Q}$. By Lemma \ref{lem::Fourier2}, $R_0$ is a union of some orbits of  $\mathbb{Z}_{p^s}^\ast$ acting on $\mathbb{Z}_{p^s}$, and so is $R_0\cup \{0\}$. Then, by Lemma \ref{lem::Fourier3} and Claim \ref{claim::1},  we obtain $R_0=p\mathbb{Z}_{p^s}\setminus\{0\}$. This implies that   $\underline{\mathbf{r}}_{0}=p^{s-1}\Delta_{p^{s-1}\mathbb{Z}_{p^s}}-1$, and hence $\underline{\mathbf{r}}_{0}(p^{s-1})=p^{s-1}-1$. On the other hand, since  $s\geq 2$, we have $p^{s-1}\in p\mathbb{Z}_{p^s}\setminus\{0\}$, and so  $\underline{\mathbf{r_0}}(p^{s-1})=-1$ by \eqref{equ::key0}, a contradiction.

		Therefore, we conclude that there are no antipodal non-bipartite distance-regular graphs over $\mathbb{Z}_{p^s}\oplus\mathbb{Z}_{p}$  with diameter $3$.
	\end{proof}

	Now we are in a position to give the proof of Theorem \ref{thm::main}.
	
	\renewcommand\proofname{\it{Proof of Theorem \ref{thm::main}}}
	\begin{proof}
		First of all, we  show that the graphs listed in (i)--(iii) are distance-regular Cayley graph over $\mathbb{Z}_{p^s}\oplus\mathbb{Z}_{p}$. For (i): the complete graph $K_{p^{s+1}}$ is distance-regular with diameter $1$, and $K_{p^{s+1}}\cong \mathrm{Cay}(\mathbb{Z}_{p^s}\oplus\mathbb{Z}_{p},(\mathbb{Z}_{p^s}\oplus\mathbb{Z}_{p})\setminus\{(0,0)\})$.  For (ii):  the complete multipartite graph $K_{t\times m}$ ($tm=p^{s+1}$)  is distance-regular with diameter $2$. Since $m$  is a divisor of  $p^{s+1}$, there exists a subgroup $H^{(m)}$ of order $m$ in $\mathbb{Z}_{p^s}\oplus\mathbb{Z}_{p}$. Then it is easy to see that  $K_{t\times m}\cong \mathrm{Cay}(\mathbb{Z}_{p^s}\oplus\mathbb{Z}_{p}, (\mathbb{Z}_{p^s}\oplus\mathbb{Z}_{p})\setminus H^{(m)})$. For (iii):  by Lemma \ref{lem::1}, the graph $\mathrm{Cay}(\mathbb{Z}_{p}\oplus\mathbb{Z}_{p},S)$ with $S=\cup_{i=1}^rH_i\setminus\{(0,0)\}$, where $2\leq r\leq p-1$ and $H_i$ ($1\leq i\leq r$) are subgroups of order $p$ in $\mathbb{Z}_{p}\oplus\mathbb{Z}_{p}$,  is a distance-regular graph of diameter $2$.
		
		Conversely, suppose that $\Gamma=\mathrm{Cay}(\mathbb{Z}_{p^s}\oplus\mathbb{Z}_{p},S)$ is a  distance-regular Cayley graph over $\mathbb{Z}_{p^s}\oplus\mathbb{Z}_{p}$. If $s=1$, then the result follows from Lemma \ref{lem::1} immediately. Thus we can suppose $s\geq 2$.  If $\Gamma$ is primitive, by Corollary \ref{cor::pri_DRG}, $\Gamma$ is isomorphic to the complete graph $K_{p^{s}+1}$. Now assume that $\Gamma$ is imprimitive. Clearly, $\Gamma\ncong C_{p^{s+1}}$ because $\mathbb{Z}_{p^s}\oplus\mathbb{Z}_{p}$ cannot be generated by $\{a,-a\}$ for any $a\in \mathbb{Z}_{p^s}\oplus\mathbb{Z}_{p}$. Thus $k\geq 3$. As $\Gamma$ has the odd order $p^{s+1}$, we assert that is $\Gamma$  antipodal and non-bipartite. By Lemma \ref{lem::imprimitive} and Corollary \ref{cor::DRG_dq}, the antipodal quotient $\overline{\Gamma}$ of $\Gamma$ is a primitive distance-regular  Cayley graph over the cyclic group or the group $\mathbb{Z}_{p^l}\oplus\mathbb{Z}_{p}$ for some $l\in\{1,2,\ldots,s-1\}$. Then it follows from Lemma \ref{lem::cir_DRG}, Corollary \ref{cor::pri_DRG} and Lemma \ref{lem::1} that $\overline{\Gamma}$ is a complete graph, a cycle of prime order, a Payley graph of prime order, or the line graph of a transversal design $TD(r,p)$ with $2\leq r\leq p-1$.  If $\overline{\Gamma}$ is a cycle of prime order, then $\Gamma$ would be a cycle, which is impossible. If $\overline{\Gamma}$ is a Payley graph of prime order, by  Lemma \ref{lem::confer}, we also deduce a contradiction. If $\overline{\Gamma}$ is the line graph of a transversal design $TD(r,p)$ with $2\leq r\leq p-1$,  then $d=4$ or $5$. By Lemma \ref{lem::antipodal cover of line graph}, we assert that $r=2$, and hence  $\overline{\Gamma}$ is the Hamming graph $H(2,p)$. However, by Lemma \ref{lem::anti_Ham}, $H(2,p)$ has no distance-regular antipodal covers for $p>2$, and we obtain a contradiction. 
		Therefore, $\overline{\Gamma}$ is a complete graph, and so $d=2$ or $3$ according to Lemma \ref{lem::imprimitive}. By Lemma \ref{lem::key1}, $d\neq 3$, whence $d=2$. However, complete multipartite graphs are the only antipodal distance-regular graphs with diameter $2$.
		
		This completes the proof.
	\end{proof}

	\section{Further research}\label{section::4}
	
	In this paper, we determine all distance-regular Calyley graphs over $\mathbb{Z}_{p^s}\oplus\mathbb{Z}_{p}$ with $p$ being an odd prime and $s\geq 1$. It is natural to propose the following problem. 
	
	\begin{problem}\label{prob::main1}
		Determine all distance-regular Cayley graphs over $\mathbb{Z}_{n}\oplus\mathbb{Z}_{p}$, where $p$ is a prime factor of $n$.
	\end{problem}
	
	Indeed, for $p=2$,  we can give a solution to Problem \ref{prob::main1} by using a similar method as in \cite{MP07}.  Here we list the main result, and omit its proof.
	
	\begin{theorem}\label{lem:dicyclic_DRG}
		Let $\Gamma$ be a distance-regular Cayley graph over $\mathbb{Z}_n\oplus\mathbb{Z}_2$ with $n>2$ being even. Then $\Gamma$ is distance-regular if and only if it is isomorphic to one of the following graphs:
		\begin{enumerate}[$(i)$]\setlength{\itemsep}{0pt}
			\item the complete graph $K_{2n}$;
			\item the complete multipartite graph $K_{t\times m}$  with $tm=2n$;
			\item the complete bipartite graph without a $1$-factor $K_{n,n} - nK_2$;
			\item the graph $\mathrm{Cay}(\mathbb{Z}_n\oplus\mathbb{Z}_2,(R_0,0)\cup (R_1,1))$, where $R_0=-R_0$ and $R_1=-R_1$ are non-empty subsets of  $1+2\mathbb{Z}_{n}$ such that $(-1+R_0,0)\cup(-1+R_1,1)$ is a non-trivial difference set in $2\mathbb{Z}_n\oplus\mathbb{Z}_2$.
		\end{enumerate}
		In particular, the graph in  $(iv)$ is a non-antipodal bipartite distance-regular graph with  diameter $3$.
	\end{theorem}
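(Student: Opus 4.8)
The plan is to transfer the analysis of \cite{MP07} for dihedral groups to the abelian group $\mathbb{Z}_n\oplus\mathbb{Z}_2$. I first handle the easy direction, that each listed graph is a distance-regular Cayley graph over $\mathbb{Z}_n\oplus\mathbb{Z}_2$. For $(i)$ and $(ii)$ this is exactly the computation in the proof of Theorem~\ref{thm::main}: $K_{2n}\cong\mathrm{Cay}(\mathbb{Z}_n\oplus\mathbb{Z}_2,(\mathbb{Z}_n\oplus\mathbb{Z}_2)\setminus\{(0,0)\})$, and for each divisor $m$ of $2n$ the graph $K_{t\times m}$ is obtained by deleting a subgroup of order $m$. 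For $(iii)$ one checks $K_{n,n}-nK_2\cong\mathrm{Cay}(\mathbb{Z}_n\oplus\mathbb{Z}_2,(\mathbb{Z}_n\setminus\{0\},1))$ --- the two parts being the cosets of $\mathbb{Z}_n\oplus\{0\}$, with $(a,0)\sim(b,1)$ iff $a\ne b$ --- which is the classical bipartite, antipodal distance-regular graph of diameter $3$ (the $2$-fold cover of $K_n$). For $(iv)$, put $D:=(-1+R_0,0)\cup(-1+R_1,1)$; then $S=(1,0)+D$ with $D$ contained in the index-$2$ subgroup $N:=2\mathbb{Z}_n\oplus\mathbb{Z}_2$, and identifying the coset $N$ with ``points'' and $(1,0)+N$ with ``blocks'' (via $(1,0)+m\leftrightarrow m$) rewrites the adjacency $x\sim(1,0)+m$ as the incidence relation $m-x\in D$; hence $\Gamma$ is the incidence graph of $\mathrm{dev}(D)$. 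Since $D$ is a non-trivial $(n,k',\lambda')$-difference set, $\Gamma$ is distance-regular of diameter $3$ and bipartite, and it is non-antipodal because ``being at distance $0$ or $3$'' is a transitive relation exactly when each point misses a single block, i.e. only when $D=N\setminus\{0\}$ --- the case that ``non-trivial'' excludes. The hypotheses $R_0=-R_0$, $R_1=-R_1$ merely express that $S$ is inverse-closed.

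For the converse, let $\Gamma=\mathrm{Cay}(\mathbb{Z}_n\oplus\mathbb{Z}_2,S)$ be distance-regular of valency $k$ and diameter $d$. As $n$ is even, $\mathbb{Z}_n\oplus\mathbb{Z}_2$ is non-cyclic, hence not generated by any $\{a,-a\}$, so $\Gamma\not\cong C_{2n}$ and $k\ge3$ (with $d=1$ giving $K_{2n}$). If $\Gamma$ is primitive I expect $\Gamma\cong K_{2n}$; for $d=2$ this amounts to the non-existence of primitive strongly regular Cayley graphs over $\mathbb{Z}_n\oplus\mathbb{Z}_2$, and $d\ge3$ is excluded similarly --- both being part of the obstacle below. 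If $\Gamma$ is imprimitive, Lemma~\ref{lem::imprimitive} makes it bipartite, antipodal, or both. Suppose first that $\Gamma$ is bipartite. Its two parts are cosets of an index-$2$ subgroup $H$ (Lemma~\ref{lem::block}), and up to $\mathrm{Aut}(\mathbb{Z}_n\oplus\mathbb{Z}_2)$ there are at most two cases, $H\cong\mathbb{Z}_n$ and $H=2\mathbb{Z}_n\oplus\mathbb{Z}_2$. In the first, $S=(R_1,1)$ and the orbits of an order-$2$ complement of $H$ exhibit $\Gamma$ as the bipartite double cover of the circulant $\mathrm{Cay}(\mathbb{Z}_n,R_1)$; by Lemma~\ref{lem::cir_DRG} (with $n$ even, so no Paley possibility) that circulant is $K_n$, giving $\Gamma=K_{n,n}-nK_2$ as in $(iii)$, or a $K_{t\times m}$ whose double is either disconnected or provably not distance-regular. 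In the second case $S$ has exactly the form in $(iv)$, and since a bipartite distance-regular graph of diameter $3$ is the incidence graph of a symmetric $2$-design (see \cite{BCN89}) on which the regular group $N$ acts, $D:=S-(1,0)$ must be a difference set in $N$; the trivial difference sets give back $(ii)$ and $(iii)$, the non-trivial ones give $(iv)$.

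It remains to treat the antipodal non-bipartite case and the bipartite case of diameter $4$. For the former, the antipodal quotient $\overline{\Gamma}$ is, by Lemmas~\ref{lem::block} and \ref{lem::imprimitive}, a primitive distance-regular Cayley graph over a cyclic group of order $<2n$ or over some $\mathbb{Z}_m\oplus\mathbb{Z}_2$ with $m<n$; by Lemmas~\ref{lem::cir_DRG} and \ref{lem::confer} together with induction, $\overline{\Gamma}$ must be a complete graph, so $d\in\{2,3\}$; $d=2$ makes $\Gamma$ a complete multipartite graph ($(ii)$), while $d=3$ is ruled out by the $p=2$ counterpart of Lemma~\ref{lem::key1}, obtained from the Fourier identities of Lemma~\ref{lem::Fourier} over $\mathbb{Z}_n$ together with the difference-set restrictions of Lemmas~\ref{lem::diff_set1}--\ref{lem::diff_set2}. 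For bipartite $\Gamma$ of diameter $4$, a halved graph is a strongly regular Cayley graph over $H$ (Lemma~\ref{lem::imprimitive}), hence either a circulant --- impossible for a proper strongly regular circulant by Lemma~\ref{lem::cir_DRG} since $|H|=n$ is even --- or a strongly regular Cayley graph over $\mathbb{Z}_{n/2}\oplus\mathbb{Z}_2$, and when this halved graph is complete multipartite one has further to exclude Hadamard-graph-type covers, which reduces to the non-existence of the relevant Menon difference sets in $H$.

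The main obstacle is precisely everything that, for odd $p$, was short-circuited by the Kochendorfer-type dichotomy of Lemma~\ref{lem::Schur_dq}, which has no analogue at $p=2$. Instead one must establish directly --- through the eigenvalue equations satisfied by the Fourier coefficients $\underline{\mathbf{r}}_0(z)$ and $\underline{\mathbf{r}}_1(z)$, integrality of the eigenvalues, and number-theoretic non-existence results for difference sets (of Bruck--Ryser--Chowla and Turyn type) --- that there is no primitive strongly regular Cayley graph, no bipartite distance-regular graph of diameter $4$, and no non-bipartite antipodal distance-regular graph of diameter $3$ over $\mathbb{Z}_n\oplus\mathbb{Z}_2$. (The restriction ``$n>2$ even'' is what makes the problem genuinely new: for odd $n$ one has $\mathbb{Z}_n\oplus\mathbb{Z}_2\cong\mathbb{Z}_{2n}$ cyclic, already covered by Lemma~\ref{lem::cir_DRG}.)
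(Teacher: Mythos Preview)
The paper does not actually prove this theorem: in Section~\ref{section::4} the authors simply state the result and write ``Here we list the main result, and omit its proof,'' indicating only that it can be obtained ``by using a similar method as in \cite{MP07}.'' So there is no proof in the paper to compare against; all one can say is that your overall plan---transplanting the Miklavi\v{c}--Poto\v{c}nik dihedral analysis to $\mathbb{Z}_n\oplus\mathbb{Z}_2$ via the Fourier identities of Lemma~\ref{lem::Fourier}, the imprimitivity dichotomy of Lemma~\ref{lem::imprimitive}, and difference-set obstructions---is exactly the approach the paper claims would work.

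That said, your sketch is an outline rather than a proof, and you are candid about this. The genuine gaps you flag are real and substantial: (a) you do not establish that there is no primitive distance-regular Cayley graph over $\mathbb{Z}_n\oplus\mathbb{Z}_2$ (the Kochend\"orfer argument behind Corollary~\ref{cor::pri_DRG} is unavailable at $p=2$); (b) the ``$p=2$ counterpart of Lemma~\ref{lem::key1}'' is asserted but not carried out, and the difference-set lemmas you cite (Lemmas~\ref{lem::diff_set1}--\ref{lem::diff_set2}) are stated only for odd primes, so you would need different number-theoretic input; (c) the bipartite diameter-$4$ case is only gestured at. A couple of smaller points: in the bipartite case with $H\cong\mathbb{Z}_n$ you should also dispose of diameters $\ge 4$, not just diameter~$3$; and in the second bipartite case you jump straight to diameter~$3$ without first arguing that larger diameters are impossible. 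None of this contradicts the paper---it simply confirms that the omitted proof is nontrivial and that your outline, while pointed in the right direction, would need real work to complete.
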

	
	\section*{Acknowledgements}
	
	X. Huang is supported by National Natural Science Foundation of China (Grant No. 11901540). L. Lu is supported by National Natural Science Foundation of China (Grant No. 12001544) and  Natural Science Foundation of Hunan Province  (Grant No. 2021JJ40707).

	

\end{document}